\newtheorem{definition}{Definition}
\newtheorem{proposition}{Proposition}
\newtheorem{theorem}{Theorem}
\newtheorem{lemma}{Lemma}
\newtheorem{corollary}{Corollary}
\newtheorem{example}{Example}
\begin{document}
%
\title{Linear time logic control of linear systems with disturbances}
%
%
%

\author{Jinjin~Zhang,~
        Zhaohui~Zhu,~
        and~Jianfei~Yang
\thanks{This work received financial
support of the National Natural Science of China (No. 60973045), the NSF of Jiangsu Province
(No. BK2007191) and Fok Ying-Tung Education Foundation.}
\thanks{Jinjin~Zhang is with the Department
of Computer Science, Nanjing University of Aeronautics and Astronautics, Nanjing,  210016 China (e-mail: jinjinzhang@nuaa.edu.cn).}
\thanks{Corresponding author. Zhaohui~Zhu is with Department of Computer Science, Nanjing University of Aeronautics and Astronautics, Nanjing,  210016 China; and is with State Key Lab of Novel Software Technology, Nanjing University, 210093 China (e-mail: bnj4892856@jlonline.com).}
\thanks{Jianfei~Yang is with the Department of Automation Engineering, Nanjing University of Aeronautics and Astronautics, Nanjing,  210016 China (e-mail: yjfsmile@nuaa.edu.cn).}
}

%
%

\markboth{Journal of IEEE Transactions on Automatic Control}%
{Shell \MakeLowercase{\textit{et al.}}: Bare Demo of IEEEtran.cls for Journals}
%



\maketitle
\begin{abstract}
The formal analysis and design of control systems is one of recent trends in control theory.
In this area, in order to reduce the complexity and scale of control systems, finite abstractions of control systems are introduced and explored.
In non-disturbance case, the controller of control systems is often generated from the controller of finite abstractions.
Recently, Pola and Tabuada provide approximate finite abstractions for linear control systems with disturbance inputs.
However, these finite abstractions and original linear systems do not always share the identical specifications,
which obstructs designing controller (of linear systems) based on their finite abstractions.
This paper tries to bridge such gap between linear systems and their finite abstractions.
\end{abstract}

\begin{IEEEkeywords}
Linear control system, disturbance input, alternating $\varepsilon$-approximate bisimulation, approximate finite abstraction, linear temporal logic, feedback control
\end{IEEEkeywords}

%
\IEEEpeerreviewmaketitle

\section{Introduction}\label{Sec:introduction}
In recent years, there has been an increasing interest in the formal analysis and design of control systems.
The formal analysis aims to check whether a control system satisfies desired specifications, while the formal design wants to construct a controller for control system so that it meets a given specification.
Early work in these fields is chiefly concerned with stability and reachability~\cite{hab:1,hab:2}. Recently, more complex specifications are considered. These specifications may be described by such as temporal logic \cite{alur,anton:1,fain:1,fain:2,kloe:1,tab:1}, regular expressions \cite{Koutsoukos} and transition systems \cite{tab:2}.
Amongst, temporal logic, due to its resemblance to natural language and the existence of algorithms for model checking, is widely adopted to describe the desired properties.
For example, linear temporal logic (LTL) is used to express specifications of discrete-time linear systems~\cite{tab:1} and continuous-time linear systems \cite{kloe:1}. Both Computation Tree Logic (CTL)\cite{anton:1} and LTL\cite{fain:1,fain:2} are adopted to specify task of mobile robotics.

In the formal analysis and design, it is always difficult to deal with large-scale control systems because of the complexity and scale of such systems.
To overcome this defect, finite abstractions are extracted from  these control systems.
For instance, Tabuada and Pappas explore finite abstractions of discrete-time linear systems and present some critical properties of linear systems ensuring the existence of finite abstractions \cite{tab:3}. Based on finite partitions of the set of inputs or outputs, finite symbolic models are constructed for nonlinear control systems in \cite{tab:4}.
A number of work has been devoted to finite abstractions of hybrid systems~\cite{henz,henz:1,alur:2,alur:3,laffer}. An excellent review of these work may be found in~\cite{alur}.

Finite abstractions play an important role in the formal design of control systems \cite{fain:2,kloe:1,tab:1,tab:2}.
As an example, Fig~\ref{fig:illust0} illustrates the function of finite abstraction in the formal design of linear system \cite{tab:1}.
Given a linear system $\Sigma$, Tabuada and Pappas provide an infinite transition system $T_{\Sigma}$ as the formal model of $\Sigma$ and construct a finite transition system $T_{\Sigma}^{f}$ as the finite abstraction of $\Sigma$.
The following result is a fundamental result in~\cite{tab:1}, which lays the foundation of the design method of controllers presented in \cite{tab:1}.
\begin{center}
\textit{$T_{\Sigma}$ and $T_{\Sigma}^{f}$ are bisimilar and share the same properties describe by linear temporal logic.}~($*$)
\end{center}
Thus, given an LTL specification $\varphi_0$, the formal design of $T_{\Sigma}$ can be equivalently performed on the finite abstraction $T_{\Sigma}^{f}$.
Tabuada and Pappas construct a controller $T_c$ of $T_{\Sigma}^{f}$ enforcing $\varphi_0$ and demonstrate that $T_{\Sigma}$ satisfies $\varphi_0$ under this controller as well.
Furthermore, based on this controller, a close-loop system $H$ satisfying $\varphi_0$ is generated.
Similar methods are also adopted in \cite{fain:2,kloe:1,tab:2}.
\begin{figure}[t]
\begin{center}
\centerline{\includegraphics[scale=0.6]{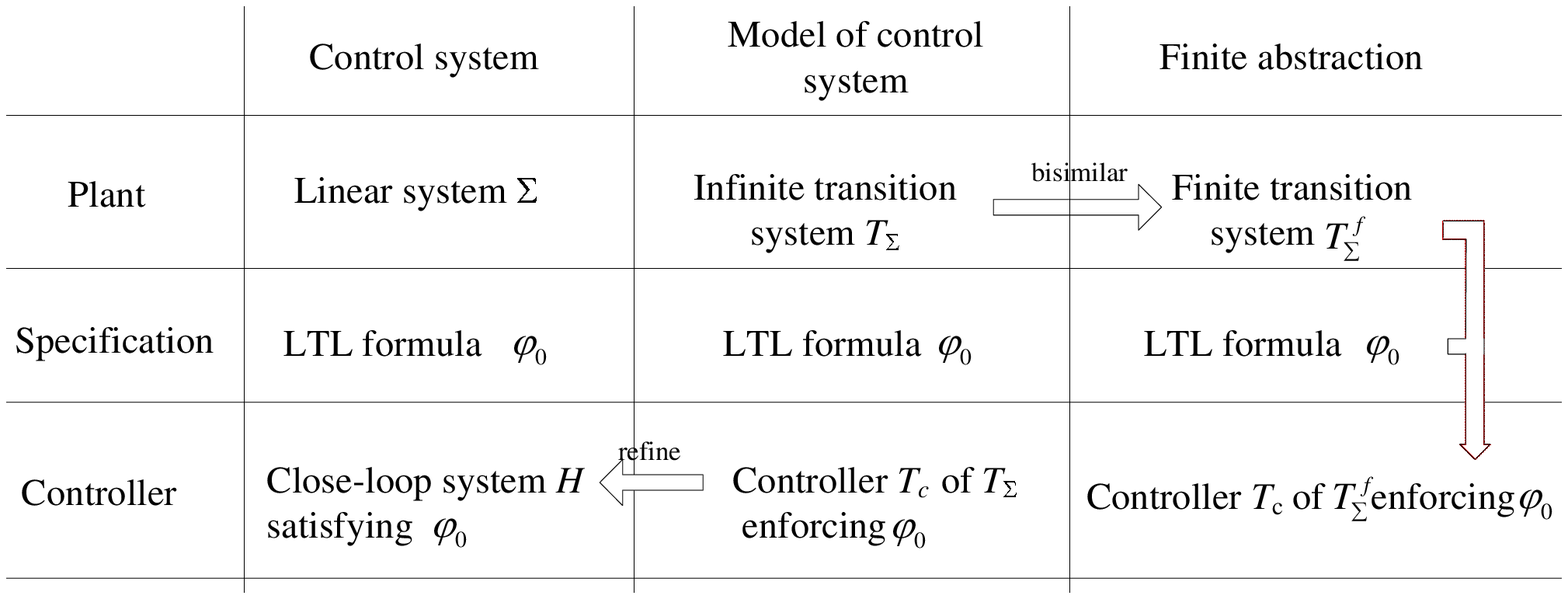}}\vspace{-0.6in}
\end{center}
\caption{Controller design \cite{tab:1}: non-disturbance case}\label{fig:illust0}
\end{figure}

The research work, mentioned above, focuses on control systems without reference to disturbances.
However, all physical systems are subject to some types of extraneous disturbances or noise during operation \cite{Kuo}.
In~\cite{pola:5,pola:2} and \cite{pola:3}, Pola and Tabuada provide a framework to design controllers for systems affected by disturbances.
To this end, they introduce symbolic abstractions for these systems.
Moreover, the notions of approximate simulation \cite{pola:3} and alternating approximate bisimulation \cite{pola:5,pola:2} are introduced to capture the equivalence between symbolic abstractions  and original control systems.

However, as we will reveal in Section~\ref{Sec:logical}, Pola and Tabuada's finite (symbolic) abstractions and their original control systems do not always share the identical properties described by linear temporal logic LTL$_{-X}$.
Roughly speaking, the result ($*$) does not always hold for control systems with disturbances.
Thus, if we adopt the same specifications for the control systems and their finite abstractions, the formal design of the latter may not be helpful for the former.
To overcome this obstacle, this paper introduces and explores a transformation of specification as illustrated in Fig~\ref{fig:illust1}.
\begin{figure}[t]
\begin{center}
\centerline{\includegraphics[scale=0.6]{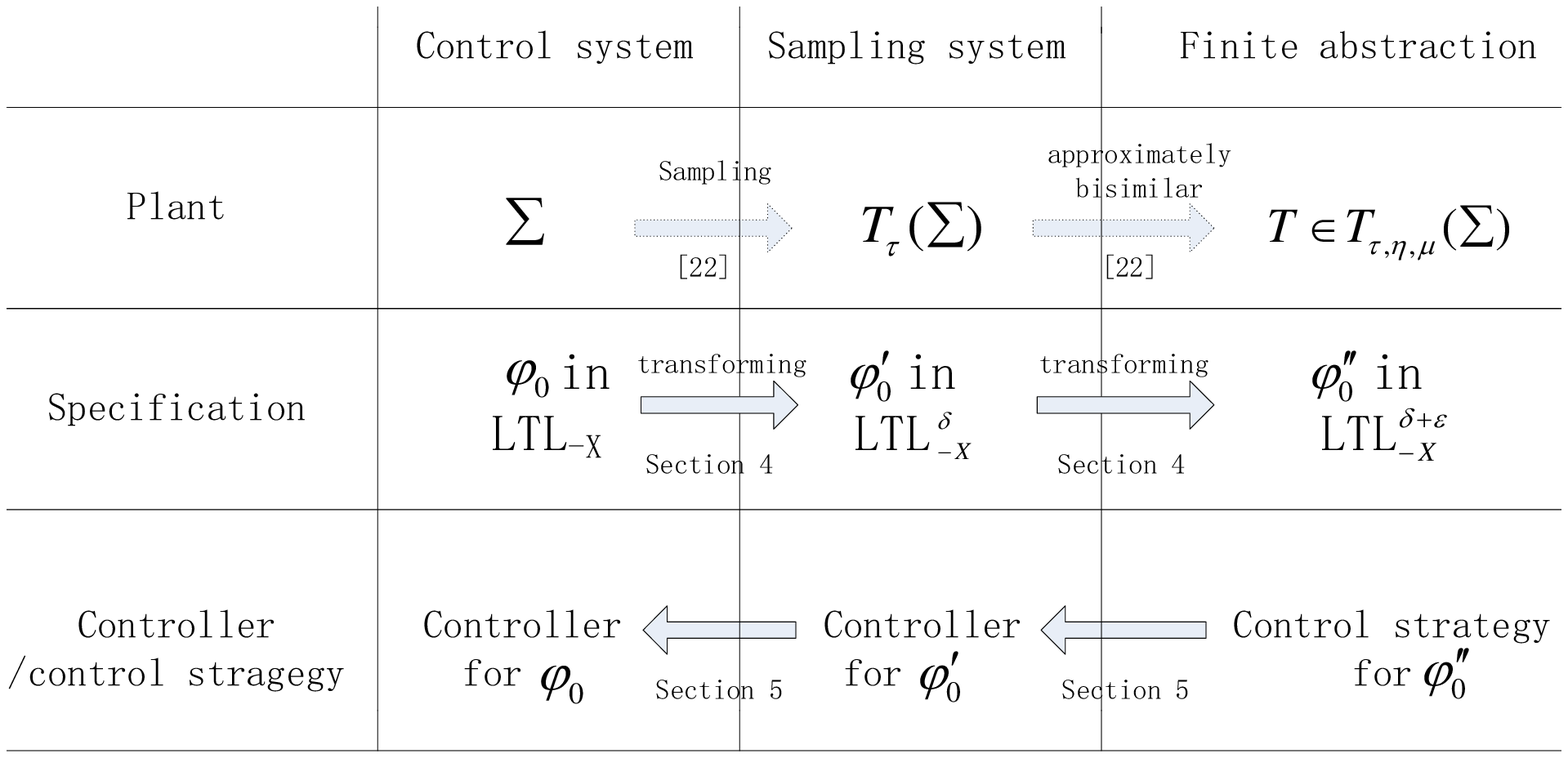}}\vspace{-0.6in}
\end{center}
\caption{Transforming specification and control strategy}\label{fig:illust1}\vspace{-0.3in}
\end{figure}

In this figure, $\Sigma$ is a linear system with disturbance inputs, $T_{\tau}(\Sigma)$ is a sample system of $\Sigma$ and $T_{\tau,\eta,\mu}(\Sigma)$ is the set of finite abstractions of $\Sigma$ introduced in \cite{pola:1}.
Given a linear temporal logic LTL$_{-X}$ formula $\varphi_{0}$ as a specification of $\Sigma$, we transform it to LTL$_{-X}^{\delta}$ formula $\varphi_{0}'$ (LTL$_{-X}^{\delta+\varepsilon}$ formula $\varphi_{0}''$) as specifications of $T_{\tau}(\Sigma)$ (finite abstraction $T$, respectively).
The parametric $\delta$ describes the distinction between the trajectories of $\Sigma$ and their sampling, while finite abstraction $T$ is alternatingly $\varepsilon$-approximately bisimilar to the sampling system $T_{\tau}(\Sigma)$.
It will be shown that, under some assumptions, for any initial state $q_{0}$ and control strategy $f$ of finite abstraction $T$ enforcing $\varphi_{0}''$, there exists a controller of $\Sigma$ derived from $q_{0}$ and $f$ such that the trajectories of $\Sigma$ with this controller satisfy the specification $\varphi_{0}$.

The rest of this paper is organized as follows. In Section~\ref{Sec:pre}, we
recall related definitions and results in the literature.
Section~\ref{Sec:problem} recalls the linear temporal logic LTL$_{-X}$, which is adopted to describe the specifications of linear systems with disturbance inputs.
In Section~\ref{Sec:logical}, we introduce the transformation of LTL$_{-X}$ formulas.
 Based on this transformation, Section~\ref{Sec:construct controller} establishes a relationship between the controller of linear control systems with disturbance inputs and the control strategy of Pola and Tabuada's abstractions.
Finally, we conclude the paper with future work in Section~\ref{Sec:discussion}.
\section{Preliminaries}\label{Sec:pre}
\subsection{Notation}
The symbols $\mathbb{Z}$, $\mathbb{N}$, $\mathbb{R}$, $\mathbb{R}_{+}$ and  $\mathbb{R}_{+}^{0}$ denote the set of integers, positive integers, reals, positive and nonnegative reals, respectively.
Given a function $f:A\rightarrow B$ and $A'\subseteq A$, $f(A')\triangleq\{b\in B: b=f(a)\ \mathrm{for}\ \mathrm{some}\ a\in A'\}$ and the notation $f_{\downarrow A'}$ means the restriction of function $f$ to the set $A'$.
For any set $A$, $A^{+}$ denotes the set of all non-empty finite strings over $A$, and $A^{\omega}$ represents the set of infinite strings over $A$.
We use $s_{A}$ and $\sigma_{A}$ to denote the elements of $A^{+}$ and $A^{\omega}$, respectively. If $A$ is known from the context, we will omit subscripts in $s_{A}$ and $\sigma_{A}$.
For any $s\in A^{+}$, we use $s[i]$ and $s[end]$ to denote the $i$-th element and the last element of $s$, respectively.
Given $i\leq j$, $s[i,j]$, $s[i,end]$ and $\sigma[i,\infty]$ represent $s[i]s[i+1]\cdots s[j]$, $s[i]s[i+1]\cdots s[end]$ and $\sigma[i]\sigma[i+1]\cdots$, respectively. As usual, $|s|$ means the length of $s$. For any $\sigma\in A^{\omega}$, $|\sigma|$ is set to be $\infty$.

Given a vector $\textit{x}\in\mathbb{R}^{n}$,
we denote by $\textit{x}_{i}$ the $\textit{i}$-th element of $\textit{x}$ and $\|x\|\triangleq \max \{|\textit{x}_{1}|, |\textit{x}_{2}|, \cdots, |\textit{x}_{n}|\}$ where $|\textit{x}_{i}|$ is the absolute value of $\textit{x}_{i}$.
For any matrix $\textit{M}\in\mathbb{R}^{n \times m}$, the symbol $\|\textit{M}\|$ represents the infinity norm of $\textit{M}$, i.e., $\| \textit{M} \|\triangleq \max_{1\leq i \leq m}\Sigma^{n}_{j=1}|\textit{a}_{ij}|$.
The set $X\subseteq \mathbb{R}^{n}$ is said to be bounded if and only if $ \sup\{\|x\|:x\in X\}<\infty$.
For any measurable function $f:\mathbb{R}_{+}^{0}\rightarrow \mathbb{R}$, $\|f\|_{\infty}\triangleq\sup\{\|f(t)\|,t\geq 0\}$ and $f$ is said to be essentially bounded if $\|f\|_{\infty}<\infty$.
For a given time $\tau\in\mathbb{R}_{+}$, define $f_{\tau}$ so that $f_{\tau}(t)=f(t)$ for any $t\in[0,\tau)$, and $f(t)=0$ elsewhere; $f$ is said to be locally essentially bounded if for any $\tau\in\mathbb{R}_{+}$, $f_{\tau}$ is essentially bounded.
The symbol $conv(v_{1},v_{2},\cdots, v_{m})$ denotes the convex hull of vectors $v_{1},v_{2},\cdots, v_{m}\in \mathbb{R}^{n}$.
A bounded set of the form $conv(v_{1},v_{2},\cdots, v_{m})$ is called a polytope.
For any $A\subseteq \mathbb{R}^{n}$ and $\mu \in \mathbb{R}$, we define $[\textit{A}]_{\mu}\triangleq\{\textit{x} \in \textit{A} \ |\  x_{i}=k_{i}\mu, k_{i} \in \mathbb{Z}, i=1,\cdots,n\}$. The closed ball centered at $x\in \mathbb{R}^{n}$ with radius $\varepsilon$ is defined by ${\cal B}_{\varepsilon}(x)\triangleq\{y\in \mathbb{R}^{n} \ :  \|x-y\|\leq \varepsilon\}$. In this paper, we consider the metric $\mathbf{d}$ on $\mathbb{R}^{n}$ defined as $\mathbf{d}(x,y)=\|x-y\|$. The Hausdorff pseudo-metric $\mathbf{d}_{h}$ induced by $\mathbf{d}$ on $2^{\mathbb{R}}$ is defined as for any $X_{1},X_{2}\subseteq \mathbb{R}^{n}$,
$\mathbf{d}_{h}(X_{1},X_{2})=\max\{\sup_{x_{1}\in X_{1}}\inf_{x_{2}\in X_{2}} \mathbf{d}(x_{1},x_{2}),\sup_{x_{2}\in X_{2}}\inf_{x_{1}\in X_{1}} \mathbf{d}(x_{1},x_{2})\}.$
\subsection{Linear systems with disturbance inputs}\label{Sec:linear system}
This subsection will recall the notion of linear system with disturbance inputs. We refer the reader to \cite{pola:3,pola:1} for more details. This paper considers the following continuous-time linear control system:
\begin{eqnarray}\label{Eq:system}
\Sigma : \dot{x}=\mathbf{A}x+\mathbf{B}u+\mathbf{G}v,\ \ \  x\in X,u\in U,v\in V
\end{eqnarray}
where $\mathbf{A}\in\mathbb{R}^{n\times n}, \mathbf{B}\in\mathbb{R}^{n\times m}, \mathbf{G}\in\mathbb{R}^{n\times k}$, $X\subseteq \mathbb{R}^{n}$ is the state space, $U\subseteq\mathbb{R}^{m}$ is the control input space, and $V\subseteq\mathbb{R}^{k}$ is the disturbance input space.
We suppose that ${\cal U}$ and ${\cal V}$ are the sets of all measurable and locally essentially bounded functions  from intervals $D\subseteq \mathbb{R}_{+}^{0}$ to $U$ and $V$, respectively, where $D$ is in one of the following forms: $[t_1,t_2]$ and $[t,\theta)$~\footnote{Here, $\theta$ may be equal to $\infty$.}.
For any interval $D\subseteq \mathbb{R}_{+}^{0}$ of the form $[t_1,t_2]$  or $[t,\theta)$, an absolutely continuous curve $\mathbf{x}:D\rightarrow X$ is said to be a trajectory of $\Sigma$ if there exists $\mathbf{u}\in {\cal U}$ and $\mathbf{v}\in {\cal V}$ such that $
\mathbf{\dot{x}}(t_{1})=\mathbf{A}\mathbf{x}(t_{1})+\mathbf{B}\mathbf{u}(t_{1})+\mathbf{G}\mathbf{v}(t_{1})$
for almost all $t_{1}\in D$.
The state reached at time $t\in \mathbb{R}_{+}^{0}$ with initial condition $x_{0}\in X$, control input $\textbf{u}\in {\cal U}$ and disturbance input $\textbf{v}\in{\cal V}$ will be denoted by $\textbf{x}(t,x_{0},\textbf{u},\textbf{v})$. Since $\Sigma$ is a linear system, we have
$\textbf{x}(t,x_{0},\textbf{u},\textbf{v}) =\textbf{x}(t,x_{0},0,0)+\textbf{x}(t,0,\textbf{u},0)+\textbf{x}(t,0,0,\textbf{v})
=e^{\mathbf{A}t}x_{0}+\textbf{x}(t,0,\textbf{u},0)+\textbf{x}(t,0,0,\textbf{v}).$

\textbf{Convention.} \textit{As in \cite{pola:3,pola:1}, we assume that the product $U\times V$ of control input space $U$ and disturbance input space $V$ is compact, and $X\subseteq \mathbb{R}^{n}$ is a bounded polytopic sets with non-empty interior and $0\in X$. Moreover, we assume that the linear control system $\Sigma$ is forward complete and asymptotically stable \footnote{A linear control system is said to be forward complete if and only if for any initial state $x\in X$, control input $\mathbf{u}: \mathbb{R}_{+}^{0} \rightarrow U$ and disturbance input $\mathbf{v}: \mathbb{R}_{+}^{0} \rightarrow V$, there exists a trajectory $\mathbf{x}:\mathbb{R}_{+}^{0}\rightarrow X$ such that $\mathbf{x}(0)=x$ and $\mathbf{\dot{x}}(t)=\mathbf{A}\mathbf{x}(t)+\mathbf{B}\mathbf{u}(t)+\mathbf{G}\mathbf{v}(t)$ for almost all $t\in \mathbb{R}_{+}^{0}$~\cite{angeli}.
The definition of asymptotical stability may be found in~\cite{Kuo,pola:5,pola:3}.}. }

\subsection{Finite abstraction of $\Sigma$}\label{Sec:finite abstraction}
This subsection will recall the construction of finite abstraction of linear system~$\Sigma$ with disturbance inputs, which is introduced by Pola and Tabuada in \cite{pola:1}.
Since inputs consist of control and disturbance inputs, where the former are controllable and the latter are not, usual transition systems can not capture the different roles played by these two kinds of inputs.
To overcome this defect,
Pola and Tabuada adopt alternating transition systems as models of these control systems and their abstract systems \cite{pola:5,pola:2,pola:3}.

\begin{definition}
An alternating transition system is a tuple
$T=(Q,A,B,\longrightarrow,O,H)$
consisting of a set of states $Q$, a set of control labels $A$, a set of disturbance labels $B$, a transition relation $\rightarrow\subseteq Q\times A\times B\times Q$, an observation set $O$ and an observation function $H:Q\rightarrow O$.
We say that an alternating transition system $T$ is metric if the observation set $O$ is equipped with a metric,
$T$ is non-blocking if $\{q':q\xrightarrow{a,b}q'\}\not=\emptyset$ for any $q\in Q$, $a\in A$ and $b\in B$, and
$T$ is finite if $Q$, $A$ and $B$ are finite.
An infinite sequence $\sigma\in Q^{\omega}$ is said to be a trajectory of $T$ if and only if for all $i\in\mathbb{N}$, $\sigma[i]\xrightarrow{a_{i},b_{i}} \sigma[i+1]$ for some $a_{i}\in A $ and $b_{i}\in B$.
\end{definition}

 In the above definition, a transition label is a pair $<a,b>$, where the former is used to denote control input and the latter represents disturbance input.
To obtain a finite abstraction, Pola and Tabuada introduce a notion of sampling system of linear system. In the area of digital control, sampling system has been widely applied as a fundamental notion \cite{Kuo}.

\begin{definition}\label{Def:tau}\cite{pola:5}
Given a linear control system $\Sigma$ below
\begin{eqnarray*}
\Sigma : \dot{x}=\mathbf{A}x+\mathbf{B}u+\mathbf{G}v,\ \ \  x\in X,u\in U,v\in V
\end{eqnarray*}
and $\tau \in \mathbb{R}_{+}$, define the transition system
$T_{\tau}(\Sigma)=(Q_{\tau}, A_{\tau}, B_{\tau}, \xrightarrow[\tau]{} , O_{\tau}, H_{\tau})$,
where:

$\bullet$ $Q_{\tau}=X$;

$\bullet$ $A_{\tau}=\{\mathbf{u}\in {\cal U}: \ the\ domain\ of\ \mathbf{u}\ is\ [0,\tau]\}$;

$\bullet$ $B_{\tau}=\{\mathbf{v}\in {\cal V}:\ the\ domain\ of\ \mathbf{v}\ is\ [0,\tau]\}$;

$\bullet$ $q\xrightarrow[\tau]{\mathbf{u},\mathbf{v}} q'$ if $\mathbf{x}(\tau, q, \mathbf{u},\mathbf{v})=q'$;

$\bullet$ $O_{\tau}=X$;

$\bullet$ $H_{\tau}=1_{X}$ is the identity map on the set $X$.

\end{definition}

Let $\mathbf{x}:\mathbb{R}_{+}^{0}\rightarrow X$ be a trajectory of $\Sigma$. Given $\tau\in \mathbb{R}_{+}$, we set $\sigma_{{\mathbf{x}}}^{\tau}=\mathbf{x}(0)\mathbf{x}(\tau)\mathbf{x}(2\tau)\cdots$.
The sequence $\sigma_{{\mathbf{x}}}^{\tau}$ can be viewed as a sampling of $\mathbf{x}$.
It is easy to check that $\sigma_{{\mathbf{x}}}^{\tau}$ is a trajectory of $T_{\tau}(\Sigma)$.
For simplicity, if $\tau$ is known from the context, we often omit the superscript in $\sigma_{{\mathbf{x}}}^{\tau}$.
In order to extract a finite abstraction from $T_{\tau}(\Sigma)$, the following notations are needed:
\begin{equation*}
\begin{aligned}
\mathcal{R}_{A_{\tau}}& \triangleq\{q\in \mathbb{R}^{n}:0\xrightarrow[\tau]{\mathbf{u},0} q \textrm{ for some }\mathbf{u}\in A_{\tau}\},\textrm{ and }\\
\mathcal{R}_{B_{\tau}}& \triangleq\{q\in \mathbb{R}^{n}:0\xrightarrow[\tau]{0,\mathbf{v}} q \textrm{ for some }\mathbf{v}\in A_{\tau}\}.
\end{aligned}
\end{equation*}
It is easy to see that $\mathcal{R}_{A_{\tau}}$  is the set of all reachable states from the initial state 0 with some  control input $\mathbf{u}$ and identically null disturbance input $\mathbf{0}$.
Similarly, $\mathcal{R}_{B_{\tau}}$ is the set of states reached at time $\tau$ from the initial state $0$ with control input $\mathbf{0}$ and some disturbance input $\mathbf{v}$.
The computation of these sets can be found in~\cite{pola:1}.
The notion of an abstract model for $\Sigma$ is recalled below.

\begin{definition}\label{Def:fin}\cite{pola:1}
Given a linear control system $\Sigma$ below
\begin{eqnarray*}
\Sigma : \dot{x}=\mathbf{A}x+\mathbf{B}u+\mathbf{G}v,\ \ \  x\in X,u\in U,v\in V
\end{eqnarray*}
and $\tau,\eta,\mu \in \mathbb{R}_{+}$, an alternating transition system $T=([X]_{\eta},A, B, \rightarrow, \mathbb{R}^{n}, H)$ is said to be an abstraction of $\Sigma$ w.r.t $\tau, \eta$ and $\mu$ if and only if it satisfies:

(1) $A\subseteq[\mathbb{R}^{n}]_{\mu}$ and $\mathbf{d}_{h}(A,\mathcal{R}_{A_{\tau}})\leq \mu/2$;

(2) $B\subseteq[\mathbb{R}^{n}]_{\mu}$ and $\mathbf{d}_{h}(B,\mathcal{R}_{B_{\tau}})\leq \mu/2$;

(3) $q\xrightarrow{a,b} q'$ if and only if
$\|\mathbf{x}(\tau, q, 0,0)+a+b-q'\|\leq\eta/2$;

(4) $H:[X]_{\eta}\hookrightarrow \mathbb{R}^{n}$ is a natural inclusion map.

We set $T_{\tau, \eta, \mu}(\Sigma)\triangleq\{T:T\textrm{ is a finite abstraction of } \Sigma \textrm{ w.r.t. }\tau, \eta\textrm{ and }\mu\}$.
\end{definition}

Since we have supposed that the linear system $\Sigma$ is forward complete, the sample system $T_{\tau}$ and any abstraction  of $\Sigma$ are non-blocking~\cite{pola:1}.
Moreover, for any $\tau,\eta,\mu \in \mathbb{R}_{+}$, the boundedness of the state space $X$ of $\Sigma$ implies that any abstraction of $\Sigma$ w.r.t $\tau,\eta$ and $\mu$ is finite~\cite{pola:1}.
In order to capture the equivalence between the finite abstraction and the sampling system of the original linear system, Pola and Tabuada introduce the notion of alternating approximate bisimulation.

\begin{definition}\label{Def:bis} \cite{pola:5,pola:2}
Let $T_{i}=(Q_{i},A_{i}, B_{i}, \xrightarrow[i]{}, O, H_{i})$ ($i=1,2$) be two metric, non-blocking alternating transition systems with the same observation set and the same metric~$\textbf{d}$ over $O$. Given a precision $\varepsilon \in \mathbb{R}_{+}^{0}$, a relation $R\subseteq Q_{1} \times Q_{2}$ is said to be an alternating $\varepsilon$-approximate ($A\varepsilon A$) bisimulation relation between $T_{1}$ and $T_{2}$ if for any $(q_{1},q_{2})\in R$,

(i) $\textbf{d}(H_{1}(q_{1}),H_{2}(q_{2}))\leq\varepsilon$;

(ii) $\forall a_{1}\in A_{1} \exists a_{2}\in A_{2} \forall b_{2} \in B_{2} \forall q'_{2}\in Q_{2} (q_{2}\xrightarrow[2]{a_{2},b_{2}} q'_{2} \Rightarrow \exists b_{1}\in B_{1} {\exists q'_{1}\in Q_{1}} ({q_{1}\xrightarrow[1]{a_{1},b_{1}} q'_{1}} \textrm{ and } \\ (q'_{1},q'_{2})\in R))$.

(iii) $\forall a_{2}\in A_{2} \exists a_{1}\in A_{1} \forall b_{1} \in B_{1} \forall q'_{1}\in Q_{1} (q_{1}\xrightarrow[1]{a_{1},b_{1}} q'_{1} \Rightarrow \exists b_{2}\in B_{2} \exists q'_{2}\in Q_{2}  ({q_{2}\xrightarrow[2]{a_{2},b_{2}} q'_{2}} \textrm{ and } \\ (q'_{1},q'_{2})\in R))
$.

For any $q_{1}\in Q_{1}$ and $q_{2}\in Q_{2}$, they are said to be $A\varepsilon A$ bisimilar, in symbols $q_{1}\sim_{\varepsilon} q_{2}$,  if there exists an $A\varepsilon A$ bisimulation relation $R$ between $T_{1}$ and $T_{2}$ such that $(q_{1},q_{2})\in R$.
Moreover, $T_{1}$ and $T_{2}$ are said to be $A\varepsilon A$ bisimilar, in symbols $T_{1}\simeq_{\varepsilon} T_{2}$,  if there exists an $A\varepsilon A$ bisimulation relation $R$ between $T_{1}$ and $T_{2}$ such that $Q_{1}=\{q_{1}\in Q_{1}: (q_{1},q_{2})\in R\ \mathrm{for}\ \mathrm{some}\ q_{2}\in Q_{2}\}$ and $Q_{2}=\{q_{2}\in Q_{2}: (q_{1},q_{2})\in R\ \mathrm{for}\ \mathrm{some}\ q_{1}\in Q_{1}\}$.
\end{definition}

Immediately, we have the following result as usual. We leave its proof to the interested reader. Similar proofs may be found in \cite{milner,Ying}.

\begin{proposition}\label{Pro:approximate bisimulation}
$q_{1}\sim_{\varepsilon} q_{2}$ if and only if they satisfy the following conditions:

(i) $\textbf{d}(H_{1}(q_{1}),H_{2}(q_{2}))\leq\varepsilon$;

(ii) $\forall a_{1}\in A_{1} \exists a_{2}\in A_{2} \forall b_{2} \in B_{2} \forall q'_{2}\in Q_{2} (q_{2}\xrightarrow[2]{a_{2},b_{2}} q'_{2} \Rightarrow \exists b_{1}\in B_{1} \exists q'_{1}\in Q_{1} ({q_{1}\xrightarrow[1]{a_{1},b_{1}} q'_{1}} \textrm{ and } \\ q'_{1}\sim_{\varepsilon} q'_{2}))$.

(iii) $\forall a_{2}\in A_{2} \exists a_{1}\in A_{1} \forall b_{1} \in B_{1} \forall q'_{1}\in Q_{1}
(q_{1}\xrightarrow[1]{a_{1},b_{1}} q'_{1} \Rightarrow \exists b_{2}\in B_{2} \exists q'_{2}\in Q_{2} ({q_{2}\xrightarrow[2]{a_{2},b_{2}} q'_{2}} \textrm{ and } \\ q'_{1}\sim_{\varepsilon} q'_{2}))
$.
\end{proposition}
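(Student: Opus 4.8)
The plan is to prove both implications by exploiting the fact that the relation $\sim_{\varepsilon}$ is itself an $A\varepsilon A$ bisimulation relation, which is the alternating, approximate analogue of the classical observation that bisimilarity is the largest bisimulation; this is exactly the ingredient that lets us weaken the membership condition ``$(q'_{1},q'_{2})\in R$'' of Definition~\ref{Def:bis} to the condition ``$q'_{1}\sim_{\varepsilon}q'_{2}$'' appearing in the proposition.

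First I would dispatch the forward direction. Assume $q_{1}\sim_{\varepsilon}q_{2}$, so some $A\varepsilon A$ bisimulation $R$ contains $(q_{1},q_{2})$. Clause (i) of the proposition is just clause (i) of Definition~\ref{Def:bis} applied to $(q_{1},q_{2})\in R$. For clause (ii), given $a_{1}\in A_{1}$, Definition~\ref{Def:bis}(ii) supplies an $a_{2}\in A_{2}$ so that every transition $q_{2}\xrightarrow[2]{a_{2},b_{2}}q'_{2}$ is matched by some $q_{1}\xrightarrow[1]{a_{1},b_{1}}q'_{1}$ with $(q'_{1},q'_{2})\in R$; since $R$ is a witnessing bisimulation containing $(q'_{1},q'_{2})$, we get $q'_{1}\sim_{\varepsilon}q'_{2}$, which is precisely what clause (ii) of the proposition demands. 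Clause (iii) is symmetric. I would stress that the alternating quantifier prefix $\forall a_{1}\exists a_{2}\forall b_{2}\forall q'_{2}\cdots\exists b_{1}\exists q'_{1}$ is preserved verbatim, because the $a_{2}$ furnished by $R$ depends only on $a_{1}$ and on the pair $(q_{1},q_{2})$, so no reordering of quantifiers is required.

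For the backward direction the key lemma is that $\sim_{\varepsilon}$ is itself an $A\varepsilon A$ bisimulation. To establish it, take $(p_{1},p_{2})\in\,\sim_{\varepsilon}$ and fix a witnessing bisimulation $R$ with $(p_{1},p_{2})\in R$. Every pair in $R$ is related by a bisimulation, namely $R$ itself, so $R\subseteq\,\sim_{\varepsilon}$; consequently, whenever the clauses of Definition~\ref{Def:bis} for $R$ deliver successor pairs in $R$, those pairs already lie in $\sim_{\varepsilon}$, and thus $\sim_{\varepsilon}$ verifies (i)--(iii) of Definition~\ref{Def:bis}. Now suppose $q_{1},q_{2}$ satisfy conditions (i)--(iii) of the proposition. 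I would set $R=\mathord{\sim_{\varepsilon}}\cup\{(q_{1},q_{2})\}$ and check that $R$ is an $A\varepsilon A$ bisimulation: for pairs already in $\sim_{\varepsilon}$ this is the key lemma together with the inclusion $\mathord{\sim_{\varepsilon}}\subseteq R$, while for the single pair $(q_{1},q_{2})$ it is the hypothesis, again using $\mathord{\sim_{\varepsilon}}\subseteq R$ to relocate the successors inside $R$. Since $(q_{1},q_{2})\in R$, this gives $q_{1}\sim_{\varepsilon}q_{2}$.

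I expect the main obstacle to be a careful verification of the key lemma in the presence of the alternating structure. The delicate point is that clauses (ii) and (iii) are not plain step-matching requirements but carry the nested $\forall\exists\forall\forall\cdots\exists\exists$ pattern separating control labels from disturbance labels; one must confirm that choosing, for each related pair, a single witnessing relation and reusing its matching leaves this quantifier order undisturbed, and that the inclusion $R\subseteq\,\sim_{\varepsilon}$ uniformly and legitimately weakens ``$(q'_{1},q'_{2})\in R$'' to ``$q'_{1}\sim_{\varepsilon}q'_{2}$''. Beyond this bookkeeping the argument is routine and parallels the classical treatments in \cite{milner,Ying}.
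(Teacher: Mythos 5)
Your proof is correct, and it is exactly the classical argument the paper itself defers to: the paper gives no proof of this proposition, explicitly leaving it to the reader with a pointer to the standard treatments in \cite{milner,Ying}, and your route --- the forward direction by unfolding a witnessing bisimulation $R$ together with the inclusion $R\subseteq\,\sim_{\varepsilon}$, and the backward direction by establishing that $\sim_{\varepsilon}$ is itself an $A\varepsilon A$ bisimulation and then adjoining the pair $(q_{1},q_{2})$ --- is precisely that standard argument, with the quantifier-order bookkeeping handled correctly since the relation membership occurs only positively in the innermost conclusion of clauses (ii) and (iii).
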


Under some circumstances, the sampling system $T_{\tau}(\Sigma)$  and finite abstraction of a control system $\Sigma$ are shown to be alternatingly approximately bisimilar.

\begin{theorem}\cite{pola:1}\label{Th:bisi}
Given an asymptotically stable linear control system $\Sigma$ below
\begin{eqnarray*}
\Sigma : \dot{x}=\mathbf{A}x+\mathbf{B}u+\mathbf{G}v,\ \ \  x\in X,u\in U,v\in V
\end{eqnarray*}
and $\varepsilon\in\mathbb{R}_{+}$. For any $\tau, \eta, \mu\in\mathbb{R}_{+}$ satisfying
$\|e^{\mathbf{A}\tau}\|\varepsilon +\mu+\eta/2 < \varepsilon$
and for any finite abstraction $T\in T_{\tau, \eta, \mu}(\Sigma)$, $T$ is $A\varepsilon A$ bisimilar to $T_{\tau}(\Sigma)$ and for any state $q_{1}$ of $T$ and state $q_{2}$ of $T_{\tau}$, if $\mathbf{d}(q_{1},q_{2})\leq \varepsilon$ then $q_{1}\sim_{\varepsilon}q_{2}$.
\end{theorem}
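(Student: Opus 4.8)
The plan is to exhibit one explicit relation and verify directly that it is an $A\varepsilon A$ bisimulation relation between $T_1\triangleq T$ and $T_2\triangleq T_\tau(\Sigma)$. Set
\[ R \triangleq \{(q_1,q_2)\in [X]_\eta\times X : \|q_1-q_2\|\le \varepsilon\}. \]
Condition (i) of Definition~\ref{Def:bis} is immediate: since $H$ is the inclusion and $H_\tau$ the identity, $\mathbf{d}(H(q_1),H_\tau(q_2))=\|q_1-q_2\|\le\varepsilon$ for every $(q_1,q_2)\in R$. The surjectivity clauses defining $\simeq_\varepsilon$ will be treated at the end; the substance lies in clauses (ii) and (iii), and the pointwise assertion ``$\mathbf{d}(q_1,q_2)\le\varepsilon\Rightarrow q_1\sim_\varepsilon q_2$'' is precisely the statement that every such pair belongs to a bisimulation relation, which $R$ will supply.

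The engine of both (ii) and (iii) is the linearity decomposition $\mathbf{x}(\tau,q,\mathbf{u},\mathbf{v})=e^{\mathbf{A}\tau}q+\mathbf{x}(\tau,0,\mathbf{u},0)+\mathbf{x}(\tau,0,0,\mathbf{v})$, the transition rule of $T$ (that $q_1\xrightarrow{a,b}q_1'$ iff $\|e^{\mathbf{A}\tau}q_1+a+b-q_1'\|\le\eta/2$), and the two Hausdorff estimates $\mathbf{d}_h(A,\mathcal{R}_{A_\tau})\le\mu/2$, $\mathbf{d}_h(B,\mathcal{R}_{B_\tau})\le\mu/2$. For (ii) I fix a control label $a_1\in A$; by $\mathbf{d}_h(A,\mathcal{R}_{A_\tau})\le\mu/2$ there is $\mathbf{u}\in A_\tau$ with $\|a_1-\mathbf{x}(\tau,0,\mathbf{u},0)\|\le\mu/2$, and I commit to $a_2=\mathbf{u}$. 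For any adversarial $b_2=\mathbf{v}\in B_\tau$ and resulting $q_2'=\mathbf{x}(\tau,q_2,\mathbf{u},\mathbf{v})$, the bound $\mathbf{d}_h(B,\mathcal{R}_{B_\tau})\le\mu/2$ applied to $\mathbf{x}(\tau,0,0,\mathbf{v})\in\mathcal{R}_{B_\tau}$ furnishes $b_1\in B$ with $\|b_1-\mathbf{x}(\tau,0,0,\mathbf{v})\|\le\mu/2$; non-blockingness of $T$ then yields some $q_1'$ with $q_1\xrightarrow{a_1,b_1}q_1'$. A triangle inequality through $e^{\mathbf{A}\tau}q_1+a_1+b_1$, using the decomposition to cancel terms against $q_2'$, gives $\|q_1'-q_2'\|\le \eta/2+\|e^{\mathbf{A}\tau}\|\,\|q_1-q_2\|+\mu/2+\mu/2\le\|e^{\mathbf{A}\tau}\|\varepsilon+\mu+\eta/2<\varepsilon$, so $(q_1',q_2')\in R$.

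Clause (iii) runs symmetrically with the systems interchanged: the adversary now supplies $a_2=\mathbf{u}\in A_\tau$, so I use $\mathbf{d}_h(A,\mathcal{R}_{A_\tau})\le\mu/2$ in the opposite direction to pick $a_1\in A$ within $\mu/2$ of $\mathbf{x}(\tau,0,\mathbf{u},0)$; then for each $b_1\in B$ and each successor $q_1'$ of $q_1$ I realize $b_1$ by some $\mathbf{v}\in B_\tau$ with $\|\mathbf{x}(\tau,0,0,\mathbf{v})-b_1\|\le\mu/2$ and set $q_2'=\mathbf{x}(\tau,q_2,\mathbf{u},\mathbf{v})$, which exists by forward completeness. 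The identical estimate again collapses to $\|e^{\mathbf{A}\tau}\|\varepsilon+\mu+\eta/2<\varepsilon$, placing $(q_1',q_2')$ in $R$. Thus $R$ is an $A\varepsilon A$ bisimulation relation and $q_1\sim_\varepsilon q_2$ whenever $\mathbf{d}(q_1,q_2)\le\varepsilon$. For $T\simeq_\varepsilon T_\tau(\Sigma)$, each $q_1\in[X]_\eta\subseteq X$ is related to itself via $(q_1,q_1)\in R$, and each $q_2\in X$ is related to a nearest grid point of $[X]_\eta$, which lies within $\varepsilon$ using the standing polytope assumption on $X$ together with $\eta/2<\varepsilon$ (forced by the hypothesis).

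The main obstacle is reading the quantifier alternation in (ii)--(iii) correctly: the control label must be \emph{committed before} the disturbance is revealed, which forces the two Hausdorff bounds to play opposite roles for control versus disturbance, and in opposite directions between (ii) and (iii). Once the label matching is arranged so that control and disturbance each contribute at most $\mu/2$ while the grid rounding of $T$ contributes at most $\eta/2$, the remaining estimate is routine; the only genuinely quantitative input is that these errors, together with the contracted state gap $\|e^{\mathbf{A}\tau}\|\varepsilon$, sum to strictly less than $\varepsilon$, which is exactly the standing hypothesis and is feasible because the hypothesis itself forces $\|e^{\mathbf{A}\tau}\|<1$, guaranteed by asymptotic stability for a suitable sampling time $\tau$.
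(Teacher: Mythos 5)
The paper never proves this theorem: it is imported verbatim from \cite{pola:1}, so there is no internal proof to compare against, and your argument is, in substance, the source's standard one. Your verification of the core is correct: the relation $R=\{(q_{1},q_{2})\in[X]_{\eta}\times X:\|q_{1}-q_{2}\|\le\varepsilon\}$, the decomposition $\mathbf{x}(\tau,q,\mathbf{u},\mathbf{v})=e^{\mathbf{A}\tau}q+\mathbf{x}(\tau,0,\mathbf{u},0)+\mathbf{x}(\tau,0,0,\mathbf{v})$, the two Hausdorff bounds invoked in opposite directions in clauses (ii) and (iii) of Definition~\ref{Def:bis}, non-blockingness of $T$ (respectively forward completeness of $\Sigma$) to produce the matching successor, and the estimate $\eta/2+\|e^{\mathbf{A}\tau}\|\varepsilon+\mu/2+\mu/2<\varepsilon$. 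The quantifier alternation (control committed before disturbance) is handled correctly, and this already establishes the half of the theorem the rest of the paper actually uses, namely that $\mathbf{d}(q_{1},q_{2})\le\varepsilon$ implies $q_{1}\sim_{\varepsilon}q_{2}$.

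The genuine gap is your last step, totality of $R$ on the $T_{\tau}(\Sigma)$ side, needed for $T\simeq_{\varepsilon}T_{\tau}(\Sigma)$. You claim every $q_{2}\in X$ lies within $\varepsilon$ of some point of $[X]_{\eta}$ ``by the polytope assumption and $\eta/2<\varepsilon$.'' That does not follow: the nearest point of the full grid $[\mathbb{R}^{n}]_{\eta}$ is indeed within $\eta/2$ of $q_{2}$, but it need not lie in $X$, and $[X]_{\eta}$ by definition retains only grid points inside $X$. Concretely, take $X=\mathrm{conv}\bigl((0,0),(1,0.6),(1,-0.6)\bigr)$ and $\eta=1$, so that $[X]_{\eta}=\{(0,0),(1,0)\}$; the vertex $(1,0.6)\in X$ is at sup-norm distance $0.6$ from $[X]_{\eta}$. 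Now take the system $\dot{x}=-x$ on this $X$ with $U=V=\{0\}$ (forward complete by convexity and $0\in X$, asymptotically stable, $\mathcal{R}_{A_{\tau}}=\mathcal{R}_{B_{\tau}}=\{0\}$, so $A=B=\{0\}$ is admissible), and choose $\tau=3$, $\varepsilon=0.55$, $\mu=0.02$: then $\|e^{\mathbf{A}\tau}\|\varepsilon+\mu+\eta/2\approx 0.547<\varepsilon$, yet $(1,0.6)$ is farther than $\varepsilon$ from every state of $T$, so condition (i) of Definition~\ref{Def:bis} forbids relating it to anything, and no $A\varepsilon A$ bisimulation can project onto all of $Q_{\tau}=X$. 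So this step fails under the assumptions as literally stated (and, in fact, so does the $T\simeq_{\varepsilon}T_{\tau}(\Sigma)$ half of the quoted theorem); repairing it requires an additional grid-density hypothesis such as $\mathbf{d}_{h}([X]_{\eta},X)\le\eta/2$ — automatic when $X$ is a grid-aligned box or union of cells — which the source tacitly assumes and which you should state explicitly rather than attribute to the polytope convention.
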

\section{Linear temporal logic LTL$_{-X}$}\label{Sec:problem}
The notion of alternating transition system provides a formal model for control system with disturbance inputs.
Apart from formal model, formal specification is another basic element in the formal analysis and design of control systems.
The former captures the dynamics of control system, while the latter describes the desired property that control system should satisfy.
As mentioned in Introduction, temporal logic is widely adopted to describe task specification~\cite{alur,anton:1,fain:1,fain:2,kloe:1,tab:1}.
In this paper, the specification of $\Sigma$ will be expressed by a linear temporal logic known as LTL$_{-X}$~\cite{emerson}.
The LTL$_{-X}$ formulae have been used to specify the desired properties of control systems in~\cite{kloe:1}. We recall this logic below.
\subsection{LTL$_{-X}$ and satisfaction relation in discrete case}
Given a finite set $\mathbb{P}$ of atomic propositions, the temporal logic LTL$_{-X}(\mathbb{P})$ is defined as follows.

\begin{definition}\cite{kloe:1,emerson}
Let $\mathbb{P}$ be a finite set of atomic propositions. The linear temporal logic LTL$_{-X}(\mathbb{P})$ formula is inductively defined as:
\begin{center}
$\varphi::=p|\neg p|\varphi_{1}\wedge\varphi_{2}|\varphi_{1}\vee\varphi_{2}|\varphi_{1}\mathbf{U}\varphi_{2}|\varphi_{1}\tilde{\mathbf{U}}\varphi_{2}$
\end{center}
where $p\in \mathbb{P}$.
\end{definition}

The operator $\mathbf{U}$ is read as ``until'' and the formula $\varphi_{1}\mathbf{U}\varphi_{2}$ specifies that $\varphi_{1}$ must hold until $\varphi_{2}$ holds.
The operator $\tilde{\mathbf{U}}$ is the dual of $\mathbf{U}$ and is best read as ``releases''.
The semantics of LTL$_{-X}(\mathbb{P})$ formulae are defined below.

\begin{definition}\label{Def:satisfaction}
Let $\sigma_{\mathbb{P}}$ be any infinite word over $2^{\mathbb{P}}$ (i.e., $\sigma_{\mathbb{P}}\in (2^{\mathbb{P}})^{\omega}$).
 The satisfaction of LTL$_{-X}(\mathbb{P})$ formula $\varphi$ at position $i\in \mathbb{N}$ of word $\sigma_{\mathbb{P}}$, denoted by $\sigma_{\mathbb{P}}[i]\models\varphi$, is defined inductively as follows:

(1) $\sigma_{\mathbb{P}}[i]\models p$ iff $p\in \sigma_{\mathbb{P}}[i]$;

(2) $\sigma_{\mathbb{P}}[i]\models \neg p$ iff $p\notin \sigma_{\mathbb{P}}[i]$;

(3) $\sigma_{\mathbb{P}}[i]\models\varphi_{1}\wedge\varphi_{2}$ iff $\sigma_{\mathbb{P}}[i]\models\varphi_{1}$ and $\sigma_{\mathbb{P}}[i]\models\varphi_{2}$;

(4) $\sigma_{\mathbb{P}}[i]\models\varphi_{1}\vee\varphi_{2}$ iff $\sigma_{\mathbb{P}}[i]\models\varphi_{1}$ or $\sigma_{\mathbb{P}}[i]\models\varphi_{2}$;

(5) $\sigma_{\mathbb{P}}[i]\models\varphi_{1}\mathbf{U}\varphi_{2}$ iff there exists $j\geq i$ such that $\sigma_{\mathbb{P}}[j]\models\varphi_{2}$ and for all $k\in\mathbb{N}$ with $i\leq k<j$, we have $\sigma_{\mathbb{P}}[k]\models\varphi_{1}$;

(6) $\sigma_{\mathbb{P}}[i]\models\varphi_{1}\tilde{\mathbf{U}}\varphi_{2}$ iff for all $j\geq i$ with $\sigma_{\mathbb{P}}[j]\not\models\varphi_{2}$, there exists $k\in\mathbb{N}$ such that $i\leq k<j$ and $\sigma_{\mathbb{P}}[k]\models\varphi_{1}$.

An infinite word $\sigma_{\mathbb{P}}$ is said to satisfy an LTL$_{-X}(\mathbb{P})$ formula $\varphi$, written as $\sigma_{\mathbb{P}}\models\varphi$, if and only if $\sigma_{\mathbb{P}}[1]\models \varphi$.
\end{definition}

\begin{definition}\label{Def:trajectory to word 1}
Let $\mathbb{P}$ be a finite set of atomic propositions and let $\prod:\mathbb{R}^{n}\rightarrow 2^{\mathbb{P}}$ be a valuation function.
Then for any LTL$_{-X}(\mathbb{P})$ formula $\varphi$, an infinite sequence $\sigma\in(\mathbb{R}^{n})^{\omega}$ is said to satisfy $\varphi$  w.r.t $\prod$, written as $\sigma\models_{\prod}\varphi$, if and only if $\prod(\sigma)\models\varphi$, where $\prod(\sigma)\triangleq\prod(\sigma[1])\prod(\sigma[2])\cdots$.
\end{definition}

In this paper, similar to \cite{kloe:1}, we fix a finite set $\mathbb{P}\!_h$ of atomic propositions, where each proposition $p\in\mathbb{P}\!_h$ denotes an open half-space  of $\mathbb{R}^{n}$, i.e.,
$p=\{x\in \mathbb{R}^{n}:c^{T}_{p}x +d_{p}<0\}$
with $c_{p}\in\mathbb{R}^{n}$ and $d_{p}\in \mathbb{R}$.
So the valuation function $\prod_h$ considered in this paper is defined as: for any $q\in\mathbb{R}^{n}$, $\prod_h(q)\triangleq\{p\in\mathbb{P}\!_h:q\in p\}$.
Henceforth, since $\mathbb{P}\!_h$ and $\prod_h$ are fixed, we will abbreviate LTL$_{-X}(\mathbb{P}\!_h)$ to LTL$_{-X}$ and omit the subscript in $\models_{\prod_h}$.
\subsection{Satisfaction relation in continuous case}
This subsection will explore the satisfaction relation between continuous trajectories of linear system $\Sigma$ and LTL$_{-X}$ formulas.
Kloetzer and Belta have defined such a satisfaction relation based on the notion of word corresponding to continuous trajectory~\cite{kloe:1}.
We will recall their definition.
Moreover, we will provide an alternative definition of satisfaction relation without reference to word.
It will be shown that the latter is coincided with Kloetzer and Belta's.
For simplifying related proofs, the latter will be adopted in the remainder of this paper.
\subsubsection{Satisfaction relation based on word}\
In~\cite{kloe:1}, to define the satisfaction relation between continuous trajectories and LTL$_{-X}$ formulas, the notion of word corresponding to continuous trajectory is introduced.

\begin{definition}\cite{kloe:1}\label{Def:infinite word}
Let $\Sigma$ be a linear control system with state space $X$ and $\mathbf{x}:\mathbb{R}_{+}^{0}\rightarrow X$ a trajectory of $\Sigma$.
An infinite sequence $\sigma\in (2^{\mathbb{P}\!_h})^{\omega}$ is said to be the word corresponding to the trajectory $\mathbf{x}$ if and only if there exist $t_{i}\in \mathbb{R}_{+}^{0} (i\in\mathbb{N})$ with $0=t_{1}<t_{2}<t_{3}<\cdots$ such that for each $i\in \mathbb{N}$,

(1$_{i}$) $\sigma[i]=\prod_h(\mathbf{x}(t_{i}))$;

(2$_{i}$) if $\sigma[i]\neq \sigma[i+1]$ then there exists $t\in[t_{i},t_{i+1}]$ such that one of the following holds:

\ \ \ \ ($2_{i}$-a) $\sigma[i]=\prod_h ({\mathbf{x}(t')})$ and $\sigma[i+1]=\prod_h({\mathbf{x}(t'')})$ for all $t'\in [t_{i},t)$ and~${t''\in [t,t_{i+1}]}$;

\ \ \ \ ($2_{i}$-b) $\sigma[i]=\prod_h({\mathbf{x}(t')})$ and $\sigma[i+1]=\prod_h({\mathbf{x}(t'')})$ for all $t'\in [t_{i},t]$ and $t''\in (t,t_{i+1}]$;

(3$_{i}$) if $\sigma[i]=\sigma[i+1]$ then $\sigma[i]=\prod_h({\mathbf{x}(t)})$ for all $t\in [t_{i},\infty)$.
\end{definition}

\begin{definition}\cite{kloe:1}\label{Def:satisfaction of x}
Let $\Sigma$ be a linear control system with state space $X$, $\mathbf{x}:\mathbb{R}_{+}^{0}\rightarrow X$ a trajectory of $\Sigma$, and let $\varphi$ be an LTL$_{-X}$ formula. The trajectory $\mathbf{x}$ is said to satisfy $\varphi$, written as $\mathbf{x}\models_{w}\varphi$,
if and only if its corresponding word satisfies $\varphi$.
\end{definition}

Clearly, given a trajectory $\mathbf{x}$, whether the above definition is well-defined depends on the existence and uniqueness of the corresponding word of  $\mathbf{x}$.
We will show that, in practical circumstance, this definition works well.
To this end, we introduce the following notion.

\begin{definition}\label{tipping}
Let $\Sigma$ be a linear control system with state space $X$, $\mathbf{x}: \mathbb{R}_{+}^{0}\rightarrow X$ a trajectory of $\Sigma$ and $t\in \mathbb{R}_{+}^{0}$.
Then $t$ is said to be a tipping point of $\mathbf{x}$ w.r.t. $\mathbb{P}\!_h$ if and only if  for any $\varepsilon_{0}\in \mathbb{R}_{+}$, there exists $\varepsilon_{1}<\varepsilon_{0}$ such that
$\prod_h(\mathbf{x}(t-\varepsilon_{1}))\neq \prod_h(\mathbf{x}(t))$ or $\prod_h(\mathbf{x}(t))\neq \prod_h(\mathbf{x}(t+\varepsilon_{1}))$.
For any $t_{0}\in\mathbb{R}_{+}^{0}$, $Tip(t_{0},\mathbf{x})\triangleq\{ t'\in\mathbb{R}_{+}^{0}: t'\leq t_{0}\textrm{ and } t'\textrm{ is a tipping point of } \mathbf{x} \textrm{ w.r.t. }\mathbb{P}\!_h\}$.
\end{definition}

Intuitively, if $t$ is a tipping point of $\mathbf{x}$ w.r.t.~$\mathbb{P}\!_h$, it means that the trajectory $\mathbf{x}$ cuts across a borderline $\{x\in \mathbb{R}^{n}:c_{p}^{T}x+d_{p}=0\}$ for some $p\in \mathbb{P}\!_h$ at time $t$.
Clearly, given a trajectory $\mathbf{x}$ and $t_{1}<t_{2}$, since $\mathbf{x}$ is continuous, if $\prod_h(\mathbf{x}(t_{1}))\neq \prod_h(\mathbf{x}(t_{2}))$ then there exists at least one tipping point $t$ w.r.t.~$\mathbb{P}\!_h$ so that $t\in [t_{1},t_{2}]$.
We leave its proof to interested reader.
The following result explores the existence and uniqueness of the word corresponding to continuous trajectory.
According to this result, if the trajectory $\mathbf{x}$ does not cut across borderlines infinite times on any bounded time interval $[0,t]$, then  Definition~\ref{Def:satisfaction of x} is well-defined for $\mathbf{x}$.

\begin{proposition}\label{exist}
Let $\Sigma$ be a linear control system with state space $X$ and let $\mathbf{x}: \mathbb{R}_{+}^{0}\rightarrow X$ be a trajectory of $\Sigma$. Then the following conclusions hold:

(1) The word corresponding to the trajectory $\mathbf{x}$ is unique if it exists.

(2) If  $Tip(t,\mathbf{x})$
is finite for any $t\in \mathbb{R}_{+}^{0}$, then there exists a word corresponding to~$\mathbf{x}$.
\end{proposition}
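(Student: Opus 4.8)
The plan is to observe that, once we set $g(t)\triangleq\prod_h(\mathbf{x}(t))$, both assertions reduce to a single object: the \emph{chronological value trace} of $g$, i.e.\ the sequence of values taken by $g$ as $t$ increases, with consecutive duplicates deleted. I would argue that any word corresponding to $\mathbf{x}$ in the sense of Definition~\ref{Def:infinite word} must coincide with this trace (padded by the final value when $g$ is eventually constant), which yields uniqueness; and that, under the local finiteness hypothesis on $Tip(t,\mathbf{x})$, the trace is a genuine $(2^{\mathbb{P}\!_h})^{\omega}$-sequence realizable by an explicit choice of sampling times, which yields existence.

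For part (1), assume $\sigma$ (with times $0=t_1<t_2<\cdots$) and $\rho$ (with times $0=s_1<s_2<\cdots$) are both words of $\mathbf{x}$. Since $t_1=s_1=0$, condition (1$_i$) gives $\sigma[1]=g(0)=\rho[1]$. The two structural facts I would isolate are: \textbf{(no skipping)} if $\sigma[i]\neq\sigma[i+1]$, then by ($2_i$-a)/($2_i$-b) the function $g$ takes only the two values $\sigma[i],\sigma[i+1]$ on all of $[t_i,t_{i+1}]$, so $\sigma$ cannot bypass any intermediate value that $g$ actually attains there; and \textbf{(no spurious repetition)} if $\sigma[i]=\sigma[i+1]$, then (3$_i$) forces $g$ to be constant on $[t_i,\infty)$, so a repeat can occur only after $g$ has stabilized. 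Together these show by induction on $i$ that $\sigma$ is forced, entry by entry, to read off $g$'s values in chronological order without gaps and without premature repeats; hence $\sigma$ equals the intrinsic value trace of $g$, and likewise $\rho$, so $\sigma=\rho$. One auxiliary point to record is that the growth of the sampling times is itself dictated by the crossing structure of $g$ (at most one crossing per interval $[t_i,t_{i+1}]$), so two words cannot differ by one ``running off to infinity'' while the other accumulates at a finite time.

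For part (2), I would first convert the finiteness hypothesis into a piecewise-constant description of $g$. Negating Definition~\ref{tipping}, a point $t$ that is not a tipping point admits $\varepsilon_0>0$ with $g$ constant on $(t-\varepsilon_0,t+\varepsilon_0)$; since $Tip(t,\mathbf{x})$ is finite for every $t$, the tipping points form an increasing sequence $\theta_1<\theta_2<\cdots$ (finite, or tending to $\infty$), and on each open interval $(\theta_j,\theta_{j+1})$, being free of tipping points and connected, $g$ is constant with some value $w_j$. I would then build the word by listing the values of $g$ chronologically: for each segment pick an interior sampling time (realizing $w_j$), and at each tipping point use $\theta_j$ itself as a sampling time (realizing the possibly isolated value $g(\theta_j)$), deleting any entry that would coincide with its predecessor. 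Condition (1$_i$) is immediate from the choice of times; for (2$_i$) the single crossing occurs exactly at some $\theta_j$, and I would choose case ($2_i$-a) when the value at the crossing agrees with the segment to its right and ($2_i$-b) when it agrees with the segment to its left. If there are only finitely many tipping points, $g$ is constant on $(\theta_N,\infty)$ and I pad the finite list with this last value, satisfying (3$_i$); if there are infinitely many, the list is already infinite.

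The main obstacle I expect is the bookkeeping at the tipping points, and in particular the case in which $\mathbf{x}$ crosses two borderlines in opposite senses simultaneously, so that $g(\theta_j)$ is an \emph{isolated} value differing from both neighbouring segment values $w_{j-1}$ and $w_j$. Here the naive ``two values per interval'' reading of (2$_i$) seems to fail, and the delicate point is that one must take the crossing time $t$ in ($2_i$-a)/($2_i$-b) to be the endpoint $\theta_j$ of the sampling interval, so that the isolated value is carried on the degenerate interval $[\theta_j,\theta_j]$: concretely, enter $w_{j-1},\,g(\theta_j),\,w_j$ as three consecutive entries, realizing $w_{j-1}\to g(\theta_j)$ by ($2_i$-a) with $t=\theta_j$ and $g(\theta_j)\to w_j$ by ($2_i$-b) with $t=\theta_j$. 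Checking that this assignment is always internally consistent, and that it reduces in the generic single-borderline case to a value at the crossing matching exactly one side because the half-spaces $\{c_p^{T}x+d_p<0\}$ are open, is the technical heart of the existence proof; the piecewise-constancy and the uniqueness argument are comparatively routine.
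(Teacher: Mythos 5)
Your proposal is correct, and it is worth separating the two parts when comparing it with the paper. For part (1), your argument is essentially the paper's: your two structural facts (``no skipping'' from ($2_i$-a)/($2_i$-b), ``no spurious repetition'' from ($3_i$)) together with induction on the entry index are exactly the engine of the paper's proof, which makes the induction precise by showing $\sigma_{1}[i]=\sigma_{2}[i]=\prod_h(\mathbf{x}(t))$ for every $t$ lying between the $i$-th sampling times of the two words; that formulation is the rigorous version of your ``forced entry by entry'' claim and of your time-alignment remark. One caveat: part (1) carries no finiteness hypothesis on $Tip(t,\mathbf{x})$, so the ``chronological value trace'' need not be well defined as an $\omega$-sequence (the value changes of $t\mapsto\prod_h(\mathbf{x}(t))$ may accumulate at a finite time, in which case every word simply never passes the accumulation point); your argument survives because the induction never needs the trace to exist globally, but the trace should be presented as a by-product of the induction rather than its foundation. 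For part (2), your route is genuinely different in organization: you first establish the global structure --- local finiteness makes the tipping points an increasing (finite or unbounded) sequence $\theta_1<\theta_2<\cdots$, and between consecutive tipping points $\prod_h\circ\,\mathbf{x}$ is locally constant, hence constant by connectedness --- and then read the word off this decomposition, treating the doubly-isolated value $g(\theta_j)\neq w_{j-1},w_j$ by the degenerate-interval trick (entries $w_{j-1},\,g(\theta_j),\,w_j$ realized by ($2_i$-a) and then ($2_i$-b) with $t=\theta_j$). The paper instead builds the sampling times one at a time: locate the first tipping point $t'$ after $t_k$, set $t_{k+1}=t'$ when the value changes from the left, and otherwise set $t_{k+1}=t'+\varepsilon_0$ with $Tip(t'+\varepsilon_0,\mathbf{x})-Tip(t',\mathbf{x})=\emptyset$; this case split plays exactly the role of your left/right matching at crossings. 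Your global version buys transparency --- the piecewise-constant picture makes the verification of ($1_i$), ($2_i$), ($3_i$) nearly mechanical --- while the paper's on-the-fly construction avoids setting up the decomposition and the duplicate-deletion bookkeeping (note only that your first sampling time must be $t_1=0$, not an arbitrary interior point of the first segment, as Definition~\ref{Def:infinite word} requires).
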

\begin{proof}
(1) Suppose that $\sigma_{1}$ and $\sigma_{2}$ are words corresponding to $\mathbf{x}$.
Then for $n=1,2$, by Definition~\ref{Def:infinite word}, there exist $t^{n}_{i}\in\mathbb{R}_{+}^{0}$ $(i\in\mathbb{N})$ with $0=t^{n}_{1}<t^{n}_{2}<\cdots$ such that for any $i\in\mathbb{N}$, ($1_{i}$), ($2_{i}$) and ($3_{i}$) in Definition~\ref{Def:infinite word} hold for $\sigma_{n}$ and $\mathbf{x}$.
To prove $\sigma_{1}=\sigma_{2}$, it suffices to show that $\sigma_{1}[i]=\sigma_{2}[i]=\prod_h(\mathbf{x}(t))$ for any $i\in\mathbb{N}$ and $t\in\mathbb{R}_{+}^{0}$ with $t^{1}_{i}\leq t\leq t^{2}_{i}$ or $t^{2}_{i}\leq t\leq t^{1}_{i}$.
We argue by induction on $i$.

If $i=1$ then $t^{1}_{i}=t^{2}_{i}=0$ and the conclusion holds trivially.

Suppose that the conclusion holds for $k$ and $i=k+1$. Consider two cases below.

 \textbf{Case 1}. $\sigma_{1}[k]=\sigma_{1}[k+1]$ or $\sigma_{2}[k]=\sigma_{2}[k+1]$.

Suppose that $\sigma_{1}[k]=\sigma_{1}[k+1]$.
Then, by Definition~\ref{Def:infinite word}, we have
\begin{equation}\label{eq:3.1}
\textrm{$\prod$}_h(\mathbf{x}(t))=\sigma_{1}[k] \textrm{ for any }t>t^{1}_{k}.
\end{equation}
Moreover, by induction hypothesis, we obtain $\sigma_{1}[k]=\sigma_{2}[k]=\prod_h(\mathbf{x}(t))$ for any $t\in\mathbb{R}_{+}^{0}$ with $t^{1}_{k}\leq t\leq t^{2}_{k}$ or $t^{2}_{k}\leq t\leq t^{1}_{k}$.
Thus, it follows from (\ref{eq:3.1}) that $\sigma_{1}[k]=\sigma_{2}[k]=\prod_h(\mathbf{x}(t))$ for any $t>t^{2}_{k}$.
Therefore, since $\sigma_{n}[k+1]=\prod_h(\mathbf{x}(t_{k+1}^{n}))$ and $t_{k+1}^{n}>t_{k}^{n}$ for $n=1,2$, we get $\sigma_{1}[k]=\sigma_{1}[k+1]=\sigma_{2}[k+1]=\prod_h(\mathbf{x}(t))$ for any $t\in\mathbb{R}_{+}^{0}$ with $t^{1}_{k+1}\leq t\leq t^{2}_{k+1}$ or $t^{2}_{k+1}\leq t\leq t^{1}_{k+1}$.

Similarly, if $\sigma_{2}[k]=\sigma_{2}[k+1]$, we may show that the conclusion holds for $k+1$.

 \textbf{Case 2}. $\sigma_{1}[k]\neq\sigma_{1}[k+1]$ and $\sigma_{2}[k]\neq\sigma_{2}[k+1]$.

If $t_{k+1}^{1}=t_{k+1}^{2}$ then the conclusion holds for $k+1$ trivially.
So we just need to consider the nontrivial case where $t_{k+1}^{1}\neq t_{k+1}^{2}$.
Without loss of generality, we may assume that $t_{k+1}^{1}<t_{k+1}^{2}$.
By induction hypothesis, we have $\sigma_{1}[k]=\sigma_{2}[k]=\prod_h(\mathbf{x}(t))$ for any $t\in\mathbb{R}_{+}^{0}$ with $t^{1}_{k}\leq t\leq t^{2}_{k}$ or $t^{2}_{k}\leq t\leq t^{1}_{k}$.
Then since $\sigma_{1}[k]\neq\sigma_{1}[k+1]$, we obtain  $t_{k}^{2}<t_{k+1}^{1}$ (otherwise,  $\sigma_{1}[k]=\sigma_{1}[k+1]$ follows from $t^{1}_{k}< t_{k+1}^{1}\leq t^{2}_{k}$ and induction hypothesis).
Furthermore, by $\sigma_{2}[k]\neq\sigma_{2}[k+1]$ and Definition~\ref{Def:infinite word}, there exists $t\in[t_{k}^{2},t_{k+1}^{2}]$ such that one of the following holds:

($a$) $\sigma_{2}[i]=\prod_h ({\mathbf{x}(t')})$ and $\sigma_{2}[i+1]=\prod_h({\mathbf{x}(t'')})$ for all $t'\in [t_{k}^{2},t)$ and $t''\in [t,t_{k+1}^{2}]$;

($b$) $\sigma_{2}[i]=\prod_h({\mathbf{x}(t')})$ and $\sigma_{2}[i+1]=\prod_h({\mathbf{x}(t'')})$ for all $t'\in [t_{k}^{2},t]$ and $t''\in (t,t_{k+1}^{2}]$.

Then since $\sigma_{1}[k+1]\neq\sigma_{2}[k]$ and $t_{k}^{2}<t_{k+1}^{1}<t_{k+1}^{2}$, we get
$\sigma_{1}[k+1]=\sigma_{2}[k+1]$ and $t\leq t_{k+1}^{1}$.
Further, it follows that $\sigma_{1}[k+1]=\sigma_{2}[k+1]=\prod_h(\mathbf{x}(t'))$ for any $t'\in[t_{k+1}^{1},t_{k+1}^{2}]$.

(2)
Suppose that $Tip(t,\mathbf{x})$ is finite for  any $t\in \mathbb{R}_{+}^{0}$.
By Definition~\ref{Def:infinite word}, it is enough to construct infinite sequences $t_{1}t_{2}\cdots  \in (\mathbb{R}_{+}^{0})^{\omega}$ and $\sigma \in (2^{\mathbb{P}\!_h})^{\omega}$ so that $0= t_{1}<t_{2}<\cdots$ and for any $i\in\mathbb{N}$, ($1_{i}$), ($2_{i}$) and ($3_{i}$) in Definition~\ref{Def:infinite word} hold for $\sigma$ and $\mathbf{x}$. We construct them by induction on $i\in\mathbb{N}$.

We set $t_{1}=0$ and $\sigma[1]=\prod_h(\mathbf{x}(t_{1}))$.

Assuming that we already have $t_{k}$ and $\sigma[k]$, we construct $t_{k+1}$ and $\sigma[k+1]$ below.
If $\prod_h(\mathbf{x}(t_{a}))=\prod_h(\mathbf{x}(t_{a}')$ for all $t_{a},t_{a}'\in(t_{k},\infty)$, then we set $t_{k+1}$ to be an arbitrary real number such that $t_{k+1}> t_{k}$ and put $\sigma[k+1]=\prod_h(\mathbf{x}(t_{k+1}))$.
In the following, we consider the case where $\prod_h(\mathbf{x}(t_{a}))\neq\prod_h(\mathbf{x}(t_{a}')$ for some $t_{a},t_{a}'\in(t_{k},\infty)$ with $t_{a}<t_{a}'$.
Then there exists at least one tipping point $t$ with $t_{a}\leq t\leq t_{a}'$.
Since $Tip(t_{a}',\mathbf{x})$ is finite, there exists $t'\in Tip(t_{a}',\mathbf{x})$ such that
$t'> t_{k}$ and $t''\not\in Tip(t_{a}',\mathbf{x})$ for all $t''\in (t_{k},t')$.
Thus by Definition~\ref{tipping}, one of the following holds:

($i$) for any $\varepsilon_{0}\in \mathbb{R}_{+}^{0}$, there exists $\varepsilon_{1}<\varepsilon_{0}$ such that $\prod_h(\mathbf{x}(t'-\varepsilon_{1}))\neq \prod_h(\mathbf{x}(t'))$,

($ii$) for any $\varepsilon_{0}\in \mathbb{R}_{+}^{0}$, there exists $\varepsilon_{1}<\varepsilon_{0}$ such that $\prod_h(\mathbf{x}(t'))\neq \prod_h(\mathbf{x}(t'+\varepsilon_{1}))$.

If ($i$) holds then we set $t_{k+1}=t'$ and $\sigma[k+1]=\prod_h(\mathbf{x}(t'))$.
Otherwise, ($ii$) holds. Since $Tip(t,\mathbf{x})$ is finite for  any $t\in \mathbb{R}_{+}^{0}$, there exists $\varepsilon_{0}\in \mathbb{R}_{+}^{0}$ such that $Tip(t'+\varepsilon_{0},\mathbf{x})-Tip(t',\mathbf{x})=\emptyset$. We set $t_{k+1}=t'+\varepsilon_{0}$ and $\sigma[k+1]=\prod_h(\mathbf{x}(t_{k+1}))$.

By Definition~\ref{Def:infinite word} and~\ref{tipping}, one may easily check that $\sigma$ defined above is the word corresponding to~$\mathbf{x}$.
\end{proof}

 \textbf{Remark 1}:
In practice, we can not observe that a trajectory cuts across borderlines infinite times on some bounded time interval $[t_{1},t_{2}]$.
So in this paper, we assume that for any trajectory $\mathbf{x}$ of $\Sigma$ and $t\in\mathbb{R}_{+}^{0}$, $Tip(t,\mathbf{x})$ is finite.
Then by Proposition~\ref{exist}, Definition~\ref{Def:satisfaction of x} is well-defined.
\subsubsection{Satisfaction relation based on trajectory}
In this subsection, we will define the satisfaction relation between continuous trajectories and LTL$_{-X}$ formulas without reference to word.
This satisfaction relation will be shown to be coincided with the one in Definition~\ref{Def:satisfaction of x}.

\begin{definition}\label{Def:satisfaction of x'}
Let $\Sigma$ be a linear control system with state space $X$ and let $\mathbf{x}:\mathbb{R}_{+}^{0}\rightarrow X$ be a trajectory of $\Sigma$. The satisfaction of LTL$_{-X}$ formula $\varphi$ at time $t\in \mathbb{R}_{+}^{0}$ of $\mathbf{x}$, denoted by $\mathbf{x}(t)\models\varphi$, is defined inductively as:

(1) $\mathbf{x}(t)\models p$ iff $\mathbf{x}(t)\in p$;

(2) $\mathbf{x}(t)\models \neg p$ iff $\mathbf{x}(t)\not\in p$;

(3) $\mathbf{x}(t)\models\varphi_{1}\wedge\varphi_{2}$ iff $\mathbf{x}(t)\models\varphi_{1}$ and $\mathbf{x}(t)\models\varphi_{2}$;

(4) $\mathbf{x}(t)\models\varphi_{1}\vee\varphi_{2}$ iff $\mathbf{x}(t)\models\varphi_{1}$ or $\mathbf{x}(t)\models\varphi_{2}$;

(5) $\mathbf{x}(t)\models\varphi_{1}\mathbf{U}\varphi_{2}$ iff  for some $t_{1},t_{2}\in \mathbb{R}_{+}^{0}$ with $t\leq t_{1}< t_{2}$, one of the following holds:

\ \ \ \ (5-a) $\mathbf{x}(t_{1})\models\varphi_{2}$ and $\mathbf{x}(t')\models\varphi_{1}$ for all $t'\in[t,t_{1})$,

\ \ \ \ (5-b) $\mathbf{x}(t')\models\varphi_{1}$ and $\mathbf{x}(t'')\models\varphi_{2}$ for all $t'\in[t,t_{1}]$ and  $t''\in(t_{1},t_{2}]$;

(6) $\mathbf{x}(t)\models\varphi_{1}\tilde{\mathbf{U}}\varphi_{2}$ iff for any $t_{1},t_{2}\in \mathbb{R}_{+}^{0}$ with $t\leq t_{1}< t_{2}$, we have

\ \ \ \ (6-a) if $\mathbf{x}(t_{1})\not\models\varphi_{2}$ then $\mathbf{x}(t')\models\varphi_{1}$ for some $t'\in[t,t_{1})$,

\ \ \ \ (6-b) if $\mathbf{x}(t')\not\models\varphi_{2}$ for all $t'\in(t_{1},t_{2}]$ then $\mathbf{x}(t'')\models\varphi_{1}$ for some $t''\in[t,t_{1}]$.

An LTL$_{-X}$ formula $\varphi$ is said to be satisfied by $\mathbf{x}$, written as $\mathbf{x}\models\varphi$, if and only if $\mathbf{x}(0)\models \varphi$.
\end{definition}

In the following, we want to show that for any trajectory $\mathbf{x}$ of $\Sigma$, if $Tip(t,\mathbf{x})$ is finite for all $t\in\mathbb{R}_{+}^{0}$, then for any LTL$_{-X}$ formula $\varphi$, $\mathbf{x}\models\varphi$ if and only if $\mathbf{x}\models_{w}\varphi$.
Before demonstrating it, we introduce a notation and provide an auxiliary lemma.

\textbf{Notation}:
\textit{Let $\Sigma$ be a linear control system with state space $X$, $\mathbf{x}:\mathbb{R}_{+}^{0}\rightarrow X$ a trajectory of $\Sigma$ and $t\in \mathbb{R}_{+}^{0}$. The function $\mathbf{x}^{t}:\mathbb{R}_{+}^{0}\rightarrow X$ is defined as $\mathbf{x}^{t}(t')=\mathbf{x}(t+t')$ for all $t'\in \mathbb{R}_{+}^{0}$.}

Clearly, $\mathbf{x}^{0}=\mathbf{x}$ and $\mathbf{x}^{t}$ is also a trajectory of $\Sigma$ for any $t\in \mathbb{R}_{+}^{0}$. Moreover, by Definition~\ref{Def:satisfaction of x'}, it is easy to check that for any $t\in \mathbb{R}_{+}^{0}$ and LTL$_{-X}$ formula $\varphi$, $\mathbf{x}^{t}\models\varphi$ if and only if $\mathbf{x}(t)\models\varphi$.

\begin{lemma}\label{Lem:subtrajectory and sub word}
Let $\Sigma$ be a linear control system with state space $X$ and let $\mathbf{x}:\mathbb{R}_{+}^{0}\rightarrow X$ be a trajectory of $\Sigma$. Suppose that for any $t\in\mathbb{R}_{+}^{0}$, $Tip(t,\mathbf{x})$
is a finite set and $\sigma$ and $\sigma_{t}$ are words corresponding to $\mathbf{x}$ and $\mathbf{x}^{t}$ (see Definition~\ref{Def:infinite word}), respectively.
Then the following conclusions hold:

(1) For any $j\in\mathbb{N}$ with $\sigma[j]\neq \sigma[j+1]$, there exist $t_{0},t_{0}'\in\mathbb{R}_{+}^{0}$ with $t_{0}<t_{0}'$ such that one of the following holds:

\ \ \  \ (a) $\sigma[j+1,\infty)=\sigma_{t_{0}}$ and for any $t'<t_{0}$, $\sigma_{t'}=\sigma[i,\infty)$ for some $i\leq j$,

\ \ \  \ (b) $\sigma[j+1,\infty)=\sigma_{t'}$ for all $t'\in(t_{0},t_{0}']$ and for any $t''\leq t_{0}$, $\sigma_{t''}=\sigma[i,\infty)$ for some $i\leq j$.

(2) For any $t\in\mathbb{R}_{+}^{0}$, there exists $j\in\mathbb{N}$ such that $\sigma[j,\infty)=\sigma_{t}$ and for any $i<j$, $\sigma[i,\infty)=\sigma_{t'}$ for some $t'<t$.
\end{lemma}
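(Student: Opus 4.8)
The plan is to reduce both parts to a single \emph{index correspondence} $t\mapsto j(t)$ for which $\sigma_{t}=\sigma[j(t),\infty)$, and then to read off (1) and (2) from the way $j(t)$ changes with $t$. The one external tool I really need is the uniqueness part of Proposition~\ref{exist}, which lets me \emph{certify} a candidate suffix as the word of $\mathbf{x}^{t}$ rather than build $\sigma_{t}$ from scratch. Before anything else I would record two facts. First, every tipping point of $\mathbf{x}^{t}$ at a time $s>0$ is a tipping point of $\mathbf{x}$ at $t+s$, so finiteness of $Tip(\cdot,\mathbf{x})$ on bounded intervals passes to $\mathbf{x}^{t}$; hence $\sigma_{t}$ exists and is unique and the statement is well posed. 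Second, from Definition~\ref{Def:infinite word} the valuation $\prod_h(\mathbf{x}(\cdot))$ is, between consecutive witnessing times $t_{i}<t_{i+1}$ of $\sigma$, constant except for one switch at some $\rho_{i}\in[t_{i},t_{i+1}]$, equal to $\sigma[i]$ then $\sigma[i+1]$, with the two sub-cases of Definition~\ref{Def:infinite word} recording whether the new value $\sigma[i+1]$ is first attained exactly at $\rho_{i}$ (condition ($2_{i}$-a)) or only strictly after $\rho_{i}$ (condition ($2_{i}$-b)); and once some $\sigma[i]=\sigma[i+1]$ occurs, ($3_{i}$) freezes the valuation on $[t_{i},\infty)$.

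The heart of the argument is the pointwise identity $\sigma_{t}=\sigma[j(t),\infty)$, where $j(t)$ is the block index of $t$ dictated by the previous paragraph: for $t$ strictly inside a constancy interval it is that block, while at a switch $t=\rho_{i}$ I set $j(\rho_{i})=i+1$ in sub-case (a) and $j(\rho_{i})=i$ in sub-case (b), so that $j$ follows the actual value $\prod_h(\mathbf{x}(t))$ carried at the instant $t$. To prove the identity I would build witnessing times for $\mathbf{x}^{t}$ by hand — putting $s_{1}=0$ and placing the remaining $s_{k}$ at (or just past) the switches $\rho_{j(t)},\rho_{j(t)+1},\dots$ shifted by $-t$ — and verify directly that this increasing sequence together with $\sigma[j(t),\infty)$ satisfies ($1_{i}$), ($2_{i}$), ($3_{i}$) for $\mathbf{x}^{t}$; uniqueness then forces $\sigma_{t}=\sigma[j(t),\infty)$. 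The base case $t=0$ is immediate, since $\mathbf{x}^{0}=\mathbf{x}$ gives $j(0)=1$ and $\sigma_{0}=\sigma$.

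Granting the correspondence, both conclusions fall out of the monotonicity of $j(\cdot)$: starting from $j(0)=1$ it is non-decreasing and increases by exactly one as $t$ crosses each switch $\rho_{i}$ (at $\rho_{i}$ itself in sub-case (a), just after $\rho_{i}$ in sub-case (b)). Consequently every value $1,\dots,j(t)$ is attained, and each $i<j(t)$ already at some $t'<t$, which is precisely statement (2). For statement (1), fix $j$ with $\sigma[j]\neq\sigma[j+1]$ and put $t_{0}=\rho_{j}$: sub-case (a) gives $j(\rho_{j})=j+1$, so $\sigma_{\rho_{j}}=\sigma[j+1,\infty)$ with $j(t')\le j$ for $t'<\rho_{j}$, yielding alternative (a); sub-case (b) gives $j(t')=j+1$ throughout a half-open interval $(\rho_{j},t_{0}']$ to the right of $\rho_{j}$, with $j(t'')\le j$ for $t''\le\rho_{j}$, yielding alternative (b).

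The step I expect to be genuinely delicate is the boundary bookkeeping at the switches. The constructed witnessing times for $\mathbf{x}^{t}$ must reproduce the \emph{exact} open/closed sides demanded by ($2_{i}$-a) versus ($2_{i}$-b), so that the shifted sequence satisfies Definition~\ref{Def:infinite word} everywhere and not merely away from the switch; and the value of $j(t)$ at the single instant $t=\rho_{i}$ must be pinned down from which side $\prod_h(\mathbf{x}(\rho_{i}))$ belongs to. A related care point is the eventually-constant tail: if $\sigma$ freezes at an index $m$, I would fix $j(t)=m$ for every $t$ past the last switch, so that the clause ``$\sigma[i,\infty)=\sigma_{t'}$ for some $t'<t$'' in (2) still quantifies over genuinely earlier blocks rather than copies of the frozen suffix.
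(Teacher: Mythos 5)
Your proposal is correct and follows essentially the same route as the paper's proof: both exploit the witnessing times of $\sigma$, build the word for $\mathbf{x}^{t}$ by shifting those times, and then invoke the uniqueness part of Proposition~\ref{exist} to identify $\sigma_{t}$ with a suffix $\sigma[j,\infty)$, including the same treatment of the frozen tail when $t$ exceeds all witnessing times. Your packaging of this as a single monotone index map $t\mapsto j(t)$ from which (1) and (2) both follow is only an organizational refinement of the paper's case-by-case argument, not a different method.
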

\begin{proof}
Since $\sigma$ is the word  corresponding to $\mathbf{x}$,  by Definition~\ref{Def:infinite word}, there exist $t_{i}\in\mathbb{R}_{+}^{0}$ $(i\in\mathbb{N})$ with $0=t_{1}<t_{2}<\cdots$ such that for any $i\in\mathbb{N}$, ($1_{i}$), ($2_{i}$) and ($3_{i}$) in Definition~\ref{Def:infinite word} hold for $\sigma$ and $\mathbf{x}$. In the following, we prove (1) and (2) in turn.

(1) Let $j\in\mathbb{N}$ and $\sigma[j]\neq \sigma[j+1]$.
Then by Definition~\ref{Def:infinite word}, there exists $t\in[t_{j},t_{j+1}]$ such that one of the following holds:

($i$) $\sigma[j]=\prod_h ({\mathbf{x}(t')})$ and $\sigma[j+1]=\prod_h({\mathbf{x}(t'')})$ for all $t'\in [t_{j},t)$ and ${t''\in [t,t_{j+1}]}$;

($ii$) $\sigma[j]=\prod_h({\mathbf{x}(t')})$ and $\sigma[j+1]=\prod_h({\mathbf{x}(t'')})$ for all $t'\in [t_{j},t]$ and ${t''\in (t,t_{j+1}]}$.

Suppose that $(i)$ holds.
We will show that $\sigma[j+1,\infty)=\sigma_{t}$ and for any $t'<t$, $\sigma_{t'}=\sigma[i,\infty)$ for some $i\leq j$.

To prove $\sigma[j+1,\infty)=\sigma_{t}$, we set $t_{1}'=0$ and for all $k\in\mathbb{N}$ with $k>1$, we set $t_{k}'=t_{j+k}-t$.
Further, we set $\sigma'=\prod_h(\mathbf{x}^{t}(t_{1}'))\prod_h(\mathbf{x}^{t}(t_{2}'))\prod_h(\mathbf{x}^{t}(t_{3}'))\cdots$.
Then it follows from ($i$) that
$\sigma'=\sigma[j+1,\infty)$.
Moreover, by Definition~\ref{Def:infinite word}, it is easy to check that $\sigma'$ is a word corresponding to $\mathbf{x}^{t}$.
Thus by (1) in Proposition~\ref{exist}, we obtain $\sigma'=\sigma[j+1,\infty)=\sigma_{t}$.

In the following, we demonstrate that for any $t'<t$, $\sigma_{t'}=\sigma[i,\infty)$ for some $i\leq j$. Let $t'<t$. Clearly, $t'\in[t_{n},t_{n+1})$ for some $n\leq j$. If $\prod_h(\mathbf{x}(t'))=\prod_h(\mathbf{x}(t_{n}))$ we set $i=n$, otherwise we set $i=n+1$.
Then by $(i)$ and $t'<t$, we get $i\leq j$.
Similar to the above,
we set $t_{1}''=0$ and $t_{k}''=t_{i+k-1}-t'$ for any $k\in\mathbb{N}$.
Then we set
$\sigma''=\prod_h(\mathbf{x}^{t'}(t_{1}''))\prod_h(\mathbf{x}^{t'}(t_{2}''))
\prod_h(\mathbf{x}^{t'}(t_{3}''))\cdots$.
Similar to the above, we may illustrate $\sigma''=\sigma[i,\infty)=\sigma_{t'}$.

Similarly, if ($ii$) holds, we may show that $\sigma[j+1,\infty)=\sigma_{t'}$ for all $t'\in(t,t_{j+1}]$ and for any $t''\leq t$, $\sigma_{t''}=\sigma[i,\infty)$ for some $i\leq j$.

(2) Let $t\in\mathbb{R}_{+}^{0}$. Consider the following two cases.

\textbf{Case 1}. $t\in[t_{i},t_{i+1})$ for some $i\in\mathbb{N}$.
Similar to (1), we may have $\sigma_{t}=\sigma[j,\infty)$ for $j=i$ or $j=i+1$.
Let $k<j$. We set $t'=t_{k}$. Similar to (1), we may get $\sigma_{t'}=\sigma[k,\infty)$.

\textbf{Case 2}. $t\not\in[t_{i},t_{i+1})$ for any $i\in\mathbb{N}$. Then it follows that $t>t_{i}$ for all $i\in\mathbb{N}$.
By Definition~\ref{Def:infinite word} and~\ref{tipping}, for any $i\in\mathbb{N}$, if $\prod_h(\mathbf{x}(t_{i}))\neq \prod_h(\mathbf{x}(t_{i+1}))$ then there exists at least one tipping point $t'\in[t_{i},t_{i+1}]$.
Further, since $Tip(t,\mathbf{x})$ is finite, there exists $j\in\mathbb{N}$ such that $\prod_h(\mathbf{x}(t_{j}))= \prod_h(\mathbf{x}(t_{j+1}))$.
Thus by Definition~\ref{Def:infinite word}, we have $\prod_h(\mathbf{x}(t''))=\sigma[j]=\sigma[i]$ for all $t''\geq t_{j}$ and $i\geq j$.
Then it follows from $t>t_{j}$ that $\prod_h(\mathbf{x}(t''))=\prod_h(\mathbf{x}(t))$ for all $t''\geq t$.
So by Definition~\ref{Def:infinite word}, it is easy to see that $\sigma_{t}=\sigma[j,\infty)$.

Let $i<j$. Clearly, $t_{i}<t$. Similar to (1), we may show that $\sigma_{t_{i}}=\sigma[i,\infty)$.
\qquad
\end{proof}

The following result demonstrates that, given a trajectory $\mathbf{x}$, Definition~\ref{Def:satisfaction of x} coincides with Definition~\ref{Def:satisfaction of x'} under the assumption that $Tip(t,\mathbf{x})$
is a finite set for any $t\in \mathbb{R}_{+}^{0}$.

\begin{proposition}
Let $\Sigma$ be a linear control system with state space $X$ and let $\mathbf{x}:\mathbb{R}_{+}^{0}\rightarrow X$ be a trajectory of $\Sigma$. If $Tip(t,\mathbf{x})$
is a finite set for any $t\in \mathbb{R}_{+}^{0}$ then for any LTL$_{-X}$ formula $\varphi$, $\mathbf{x}\models\varphi$ if and only if the word corresponding to $\mathbf{x}$ satisfies $\varphi$.
\end{proposition}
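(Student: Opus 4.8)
The plan is to prove a statement slightly stronger than the proposition, stated uniformly over all time instants, and then recover the proposition by specializing to $t=0$. Fix the word $\sigma$ corresponding to $\mathbf{x}$, which exists and is unique by Proposition~\ref{exist} under the standing finiteness hypothesis on $Tip$, together with the witnessing times $0=t_{1}<t_{2}<\cdots$ of Definition~\ref{Def:infinite word}. For each $t\in\mathbb{R}_{+}^{0}$ the shifted trajectory $\mathbf{x}^{t}$ again has $Tip(s,\mathbf{x}^{t})$ finite for all $s$ (its tipping points are just shifts of those of $\mathbf{x}$), so the word $\sigma_{t}$ corresponding to $\mathbf{x}^{t}$ exists, and Lemma~\ref{Lem:subtrajectory and sub word}(2) supplies an index $j_{t}\in\mathbb{N}$ with $\sigma[j_{t},\infty)=\sigma_{t}$. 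I would then prove, by structural induction on $\varphi$, the strengthened claim
\[
\mathbf{x}(t)\models\varphi \iff \sigma[j_{t}]\models\varphi \qquad\text{for all } t\in\mathbb{R}_{+}^{0}.
\]
Because LTL$_{-X}$ has neither a past nor a next operator, satisfaction at a position depends only on the suffix from that position (an immediate induction on $\varphi$), so $\sigma[j_{t}]\models\varphi$ is the same as $\sigma_{t}\models\varphi$; taking $t=0$, where $\sigma_{0}=\sigma$ and $j_{0}=1$, then yields $\mathbf{x}\models\varphi$ iff $\sigma\models\varphi$, i.e.\ iff $\mathbf{x}\models_{w}\varphi$ (Definitions~\ref{Def:satisfaction of x} and~\ref{Def:satisfaction of x'}).

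For the base cases I would use that $\sigma[j_{t}]=\sigma_{t}[1]=\prod_h(\mathbf{x}^{t}(0))=\prod_h(\mathbf{x}(t))$, which follows from clause $(1_{1})$ of Definition~\ref{Def:infinite word} applied to $\mathbf{x}^{t}$ together with $\sigma[j_{t},\infty)=\sigma_{t}$. Hence $\sigma[j_{t}]\models p$ iff $p\in\prod_h(\mathbf{x}(t))$ iff $\mathbf{x}(t)\in p$ iff $\mathbf{x}(t)\models p$, and $\neg p$ is symmetric; the Boolean cases $\varphi_{1}\wedge\varphi_{2}$ and $\varphi_{1}\vee\varphi_{2}$ are immediate from the induction hypothesis applied to $\varphi_{1},\varphi_{2}$ at the same time $t$.

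The substance of the argument lies in the two temporal cases $\varphi_{1}\mathbf{U}\varphi_{2}$ and $\varphi_{1}\tilde{\mathbf{U}}\varphi_{2}$, where the continuous semantics (clauses (5)--(6) of Definition~\ref{Def:satisfaction of x'}) must be reconciled with the discrete word semantics (clauses (5)--(6) of Definition~\ref{Def:satisfaction}). The bridge is a monotone time-to-position correspondence extracted from Lemma~\ref{Lem:subtrajectory and sub word}: $t\mapsto j_{t}$ is nondecreasing, its range is an initial segment of the positions, each such position is realized on an interval of times, and each letter change $\sigma[l]\neq\sigma[l+1]$ occurs at a single tipping instant whose two alternatives (a)/(b) in Lemma~\ref{Lem:subtrajectory and sub word}(1) are precisely the left-closed/right-open versus left-open/right-closed transitions $(2_{l}\text{-a})$/$(2_{l}\text{-b})$ of Definition~\ref{Def:infinite word}. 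Under this correspondence the induction hypothesis for $\varphi_{1},\varphi_{2}$ makes the truth values $[\mathbf{x}(s)\models\varphi_{i}]$ along the trajectory agree with $[\sigma[l]\models\varphi_{i}]$ along the word. For the forward direction of $\varphi_{1}\mathbf{U}\varphi_{2}$ I would send the continuous witness $t_{1}$ (clause (5-a) or (5-b)) to a position $k\geq j_{t}$ — namely $k=j_{t_{1}}$ in case (5-a), and the position immediately following the type-(b) transition at $t_{1}$ in case (5-b) — so that $\sigma[k]\models\varphi_{2}$ by the induction hypothesis, while every $l$ with $j_{t}\leq l<k$ is realized by a time in $[t,t_{1})$, respectively $[t,t_{1}]$, at which $\varphi_{1}$ holds, giving $\sigma[l]\models\varphi_{1}$; the converse reverses the translation, and the release operator is handled dually, turning the universally quantified clauses (6-a)/(6-b) into clause (6) of Definition~\ref{Def:satisfaction}.

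The hard part will be the bookkeeping at the tipping instants in these temporal cases. One must verify that the half-open/closed boundary conventions built into the continuous \emph{until} and \emph{release} — the split into (5-a)/(5-b) and (6-a)/(6-b) — line up exactly with the two transition types (a)/(b) of Lemma~\ref{Lem:subtrajectory and sub word}(1), and that the map $t\mapsto j_{t}$ is genuinely monotone and covers every intermediate position by a representative time at which the relevant subformula retains its truth value. Pinning down this monotonicity and coverage, rather than the structural induction itself, is where the real work resides; once the correspondence is in place, each case closes by applying the induction hypothesis across the matched intervals.
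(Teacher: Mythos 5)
Your proposal is correct and takes essentially the same approach as the paper: the paper likewise proves the time-uniform claim ($\mathbf{x}^{t}\models\varphi$ iff $\sigma_{t}\models\varphi$, equivalent to your position-based formulation $\mathbf{x}(t)\models\varphi$ iff $\sigma[j_{t}]\models\varphi$ via the suffix property of LTL$_{-X}$) by structural induction on $\varphi$, and it resolves the temporal cases exactly as you outline, using parts (1) and (2) of Lemma~\ref{Lem:subtrajectory and sub word} to match the (a)/(b) transition types against the (5-a)/(5-b) clauses of the continuous semantics. The boundary bookkeeping you flag as the remaining work is precisely what the paper's Case 2 (the until case, in both directions) carries out.
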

\begin{proof}
Suppose that $Tip(t,\mathbf{x})$
is a finite set for any $t\in \mathbb{R}_{+}^{0}$ and $\sigma$ is the word corresponding to $\mathbf{x}$.
It is enough to show that for any LTL$_{-X}$ formula $\varphi$ and $t\in\mathbb{R}_{+}^{0}$,
$\mathbf{x}^{t}\models\varphi$ if and only if $\sigma_{t}\models\varphi$,
where $\sigma_{t}$ is the word corresponding to $\mathbf{x}^{t}$.
We will proceed by induction on the structure of formula $\varphi$.
The proof is a routine case analysis. We will give two sample cases.

 \textbf{Case 1}. $\varphi=p$.
Let $t\in \mathbb{R}_{+}^{0}$. Then we have
\begin{equation*}
\begin{aligned}
\mathbf{x}^{t}\models\varphi &\textrm{ iff } \mathbf{x}(t)\models p
\\
& \textrm{ iff } \mathbf{x}(t)\in p &\textrm{ (by Definition~\ref{Def:satisfaction of x'})}
\\
& \textrm{ iff } p\in \sigma_{t}[1] &\textrm{ (by Definition~\ref{Def:infinite word})\ \ }
\\
& \textrm{ iff } \sigma_{t}\models p. &\textrm{ (by Definition~\ref{Def:satisfaction})\ \ }
\end{aligned}
\end{equation*}

 \textbf{Case 2}. $\varphi=\varphi_{1}\mathbf{U}\varphi_{2}$.
Let $t\in\mathbb{R}_{+}^{0}$.
We prove that $\mathbf{x}^{t}\models\varphi$ if and only if $\sigma_{t}\models\varphi$ as follows.

(From Left to Right)
Let $\mathbf{x}^{t}\models\varphi$. So $\mathbf{x}({t})\models\varphi$. Then by Definition~\ref{Def:satisfaction of x'},  there exist $t_{1},t_{2}\in \mathbb{R}_{+}^{0}$ with $t\leq t_{1}< t_{2}$ such that one of the following holds:

(a) $\mathbf{x}(t_{1})\models\varphi_{2}$ and $\mathbf{x}(t')\models\varphi_{1}$ for all $t'\in[t,t_{1})$,

(b) $\mathbf{x}(t')\models\varphi_{1}$ and $\mathbf{x}(t'')\models\varphi_{2}$ for all $t'\in[t,t_{1}]$ and  $t''\in(t_{1},t_{2}]$.

Suppose that (a) holds.
Then it follows that $\mathbf{x}^{t_{1}}\models \varphi_{2}$ and $\mathbf{x}^{t'}\models \varphi_{1}$ for any $t'\in[t,t_{1})$.
So by induction hypothesis, we obtain
\begin{equation}\label{eq:pro}
\sigma_{t_{1}}\models \varphi_{2}\textrm{ and }\sigma_{t'}\models \varphi_{1}\textrm{ for any }t'\in[t,t_{1}).
\end{equation}
Then by (2) in Lemma~\ref{Lem:subtrajectory and sub word}, there exists $j\in\mathbb{N}$ such that $\sigma_{t}[j,\infty)=\sigma_{t_{1}}$ and for any $i<j$, $\sigma_{t}[i,\infty)=\sigma_{t'}$ for some $t'\in[t,t_{1})$.
Further, it follows from (\ref{eq:pro}) and Definition~\ref{Def:satisfaction} that $\sigma_{t}[j]\models \varphi_{2}$ and $\sigma_{t}[i]\models \varphi_{1}$ for any $i<j$.
Therefore, by Definition~\ref{Def:satisfaction}, we get  $\sigma_{t}[1]\models \varphi$ and then $\sigma_{t}\models \varphi$.

Suppose that (b) holds. Then we have $\mathbf{x}^{t'}\models\varphi_{1}$ and $\mathbf{x}^{t''}\models\varphi_{2}$ for all $t'\in[t,t_{1}]$ and  $t''\in(t_{1},t_{2}]$.
So it follows from induction hypothesis that
\begin{equation}\label{eq:satis}
\sigma_{t'}\models \varphi_{1} \textrm{ and } \sigma_{t''}\models \varphi_{2} \textrm{ for all }t'\in[t,t_{1}]\textrm{ and  }t''\in(t_{1},t_{2}].
\end{equation}
Moreover, by (2) in Lemma~\ref{Lem:subtrajectory and sub word}, there exists $j\in\mathbb{N}$ such that $\sigma_{t}[j,\infty)=\sigma_{t_{2}}$ and for any $i<j$, $\sigma_{t}[i,\infty)=\sigma_{t_{i}'}$ for some $t_{i}'\in  \mathbb{R}_{+}^{0}$ with $t\leq t_{i}'<t_{2}$.
If $\sigma_{t}[i,\infty]\models \varphi_{1}$ for all $i<j$ then $\sigma_{t}\models \varphi$ holds trivially.
Suppose that $\sigma_{t}[n,\infty]\not\models \varphi_{1}$ for some $n<j$. Clearly, there exists $k\leq n$ such that $\sigma_{t}[k]\not\models \varphi_{1}$ and $\sigma_{t}[i]\models \varphi_{1}$ for all $i<k$.
Then since $k\leq n<j$, there exists $t_{k}'\in[t,t_{2})$ such that $\sigma[k,\infty)=\sigma_{t_{k}'}$.
Thus it follows from~(\ref{eq:satis}) and $\sigma_{t}[k]\not\models \varphi_{1}$ that $\sigma_{t}[k]\models \varphi_{2}$.
Therefore, since  $\sigma_{t}[i]\models \varphi_{1}$ for all $i<k$, we obtain $\sigma_{t}\models \varphi$.

(From Right to Left)
Let $\sigma_{t}\models \varphi_{1}\mathbf{U}\varphi_{2}$.
Then by Definition~\ref{Def:satisfaction}, there exists $n\in\mathbb{N}$ such that
$\sigma_{t}[n]\models \varphi_{2}$ and $\sigma_{t}[i]\models \varphi_{1}$ for any $i<n$.
Thus there exists $j\leq n$ such that
\begin{equation}\label{eq:pro3}
\sigma_{t}[j,\infty]\models \varphi_{2}, \sigma_{t}[i,\infty]\not\models \varphi_{2}\textrm{ and }\sigma_{t}[i,\infty]\models \varphi_{1}\textrm{ for any }i<j.
\end{equation}

If $j=1$ then $\sigma_{t}\models \varphi_{2}$. Further, by induction hypothesis, we obtain $\mathbf{x}^{t}\models\varphi_{2}$.
Then it follows from Definition~\ref{Def:satisfaction of x'} that
$\mathbf{x}^{t}\models\varphi_{1}\mathbf{U}\varphi_{2}$.
In the following, we consider the case where $j>1$.
Then by~(\ref{eq:pro3}) and Definition~\ref{Def:satisfaction}, it is easy to check that
$\sigma_{t}[j]\neq\sigma_{t}[j-1]$.
Thus by (1) in Lemma~\ref{Lem:subtrajectory and sub word},  there exists $t_{0},t_{0}'\in\mathbb{R}_{+}^{0}$ with $t_{0}<t_{0}'$ such that one of the following holds:

\ \ \ ($a$) $\sigma_{t}[j,\infty)=\sigma_{t+t_{0}}$ and for any $t'<t_{0}$, $\sigma_{t}[i,\infty)=\sigma_{t+t'}$ for some $i< j$,

\ \ \ ($b$) $\sigma_{t}[j,\infty)=\sigma_{t+t'}$ for all $t'\in(t_{0},t_{0}']$ and for any $t''\leq t_{0}$, $\sigma_{t}[i,\infty)=\sigma_{t+t''}$ for some $i< j$.

If ($a$) holds then it follows from induction hypothesis and (\ref{eq:pro3}) that $\sigma_{t+t_{0}}\models\varphi_{2}$ and $\sigma_{t+t'}\models\varphi_{1}$ for any $t'<t_{0}$.
Thus by Definition~\ref{Def:satisfaction of x'}, we obtain $\sigma_{t}\models\varphi$.
Similarly, if ($b$) holds, we may show that  $\sigma_{t}\models\varphi$.
\qquad
\end{proof}

Henceforth, the sentence ``trajectory $\mathbf{x}$ satisfies an LTL$_{-X}$ formula $\varphi$'' means $\mathbf{x}\models\varphi$ defined in Definition~\ref{Def:satisfaction of x'}.
\section{Transforming Specification}\label{Sec:logical}
The remainder of this paper concerns itself with the relationship between the formal design of Pola and Tabuada's abstractions and that of linear systems with disturbance inputs.
Similar problem has been considered for systems without disturbances \cite{fain:2,kloe:1,tab:1,tab:2}.
Amongst, Tabuada and Pappas demonstrate the following two conclusions~\cite{tab:1}:

\textit{(TP-1). There exists a controller for linear system enforcing specification if and only if there exists a controller for finite abstraction enforcing the same specification.}

\textit{(TP-2). The controller for finite abstraction can be applied to the original linear system to meet specification.}

Based on these two conclusions, in order to obtain a controller of control system enforcing the given specification, it is enough to construct a controller for finite abstraction enforcing this specification~\cite{tab:1}.
Unfortunately, when we consider linear system with disturbances, neither (TP-1) nor (TP-2) always holds. Two counterexamples are provided below.
\begin{figure}[t]
\begin{center}
\centerline{\includegraphics[scale=0.5]{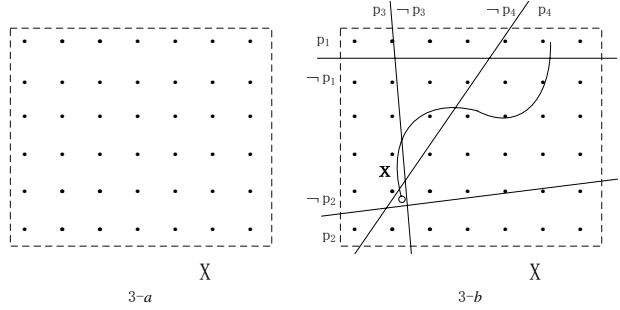}}\vspace{-0.7in}
\end{center}
\caption{Counterexample for (TP-1)}\label{Example: counter}\vspace{-0.3in}
\end{figure}
\begin{example}\label{example: counter1}
Consider the state space $X$ of linear system $\Sigma$, as shown in Fig~\ref{Example: counter}-a.
Given $\varepsilon\in\mathbb{R}_{+}$, let $\tau,\eta,\mu\in\mathbb{R}_{+}$ such that $\|e^{\mathbf{A}\tau}\|\varepsilon +\mu+\eta/2 < \varepsilon$.
Clearly, such $\tau,\eta,\mu$ exist.
Then by Theorem~\ref{Th:bisi}, any finite abstraction $T\in T_{\tau,\eta,\mu}(\Sigma)$ is $A\varepsilon A$ bisimilar to $T_{\tau}(\Sigma)$.
Let $T\in T_{\tau,\eta,\mu}(\Sigma)$ and $T=(Q,A,B,\rightarrow,\mathbb{R}^{n},H)$.
In Fig~\ref{Example: counter}-a, black spots denote the states of $T$.
Let $\mathbb{P}\!_h=\{p_{1},p_{2},p_{3},p_{4}\}$ be a finite set of propositions and let $p_{i}$ ($i=1,2,3,4$) be atomic proposition representing open half-space as illustrated in Fig~\ref{Example: counter}-b.
In this case, if specification $\varphi_{0}$ is $\neg p_{1}\wedge\neg p_{2}\wedge p_{3} \wedge p_{4}$, then
there exist some initial states of $\Sigma$ such that the trajectories of $\Sigma$ from these states satisfy specification (e.g., see $\mathbf{x}$ in Fig~\ref{Example: counter}-b).
Thus we may construct a controller which sets initial state of $\Sigma$ to be $\mathbf{x}(0)$.
Clearly, the trajectories of $\Sigma$ with this controller satisfy the above specification.
On the other hand, since every state in $Q$ (i.e., black spots in Fig~\ref{Example: counter}-a) doesn't satisfy $\varphi_0$, any trajectory of $T$ does not satisfy this specification.
So there does not exist a controller for $T$ enforcing this specification.
Therefore, (TP-1) does not always hold for Pola and Tabuada's abstractions and linear systems with disturbance inputs.
\end{example}
\begin{figure}[t]
\begin{center}
\centerline{\includegraphics[scale=0.5]{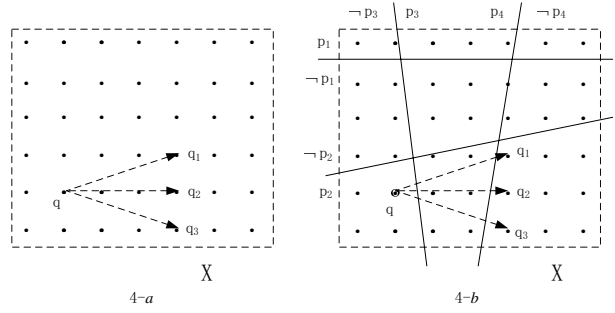}}\vspace{-0.7in}
\end{center}
\caption{Counterexample for (TP-2)}\label{Example: counter'}\vspace{-0.3in}
\end{figure}
\begin{example}\label{example:counter2}
Similar to Example~\ref{example: counter1},  in Fig~\ref{Example: counter'}-a, $X$ denotes the state space of a linear system $\Sigma$.
Given $\varepsilon\in \mathbb{R}_{+}$, let $\tau,\eta,\mu\in\mathbb{R}_{+}$ with $\|e^{\mathbf{A}\tau}\|\varepsilon +\mu+\eta/2 < \varepsilon$.
 Clearly, such $\tau,\eta,\mu$ exist.
Thus any finite abstraction
 $T\in T_{\tau,\eta,\mu}(\Sigma)$ is $A\varepsilon A$ bisimilar to $T_{\tau}(\Sigma)$.
Let  $T\in T_{\tau,\eta,\mu}(\Sigma)$ and $T=(Q,A,B,\rightarrow,\mathbb{R}^{n},H)$.
The states of finite abstraction are indicated by  black spots in {Fig~\ref{Example: counter'}-a}.
Let $q\in Q$ be a state of $T$.
Without loss generality, we may suppose that $a\in A$ is a control label of $T$ and $\{{q':q\xrightarrow{a,b}q'} \ \textrm{for some disturbance} \textrm{ label }  b\in B\}=\{q_{1},q_{2},q_{3}\}$, as illustrated in {Fig~\ref{Example: counter'}-a}.
Consider a finite set $\mathbb{P}\!_h=\{p_{1},p_{2},p_{3},p_{4}\}$, where $p_{i}$ ($i=1,2,3,4$) is atomic proposition representing open half-space as shown in Fig~\ref{Example: counter'}-b.
Let the specification $\varphi_{0}=(\neg p_{3} \wedge p_{4})\mathbf{U} (p_{3} \wedge\neg p_{4})$.
We set the initial state to be $q$ and put the control label to be $a$ when the current state of $T$ is $q$.
Under such control, it is easy to check that the trajectories of $T$ satisfy the given specification.
However, due to Fig~\ref{Example: counter'}-b, it is clear that  any trajectory of $\Sigma$ does not satisfy this specification under any control.
Thus (TP-2) does not always hold for Pola and Tabuada's abstractions and linear systems with disturbance inputs.
\end{example}

Due to the above two examples, we know that  linear systems  and their finite abstractions do not always share the identical properties described by LTL$_{-X}$ formulae under control.
Thus, given an LTL$_{-X}$ specification $\varphi_0$ for linear systems with disturbance inputs, if we directly adopt $\varphi_0$ as specification for finite abstraction, then the formal design for the latter may not be helpful for the former.
The remainder of this paper will try to find a way to solve this problem and establish results similar to (TP-1) and (TP-2) for systems with disturbances.
To this end, we will transform LTL$_{-X}$ specification $\varphi_{0}$ for linear system $\Sigma$ to specification $\varphi_{0}''$ for finite abstraction and demonstrate that, under some assumptions, given an initial state $q_0$ and a control strategy $f$ of finite abstraction enforcing $\varphi_{0}''$, there exists a controller based on $q_0$ and $f$ so that  the trajectories of $\Sigma$ with this controller satisfy~$\varphi_{0}$.
This section will take two steps to realize such transformation.
\subsection{Transforming specifications for $\Sigma$ to specifications for $T_{\tau}(\Sigma)$}\label{Sec:logical connection 1}
This subsection will deal with transforming the specification $\varphi_{0}$ for $\Sigma$ to $\varphi_{0}'$ for $T_{\tau}(\Sigma)$. We will show that under some circumstance, if $\sigma_{{\mathbf{x}}}$ is a sampling trajectory of $\mathbf{x}$ then $\sigma_{{\mathbf{x}}} \models \varphi_{0}'$ implies $\mathbf{x}\models \varphi_{0}$.
Here the specification $\varphi_{0}'$ is described by the linear temporal logic defined below.
\begin{definition}\label{Def:LTL1 formula}
Let $\delta\in\mathbb{R}_{+}$.  The formulae $\varphi$ of linear temporal logic LTL$_{-X}^{\delta}(\mathbb{P}\!_h)$ are inductively defined as:
\begin{center}
$\varphi::=[\delta]p|[\delta]\neg p|\varphi_{1}\wedge\varphi_{2}|\varphi_{1}\vee\varphi_{2}|\varphi_{1}\mathbf{U}\varphi_{2}|\varphi_{1}\tilde{\mathbf{U}}\varphi_{2}$
\end{center}
where $p\in \mathbb{P}\!_h$, i.e., $p=\{x\in\mathbb{R}^{n}:c_{p}^{T}x+d_{p}<0\}$ for some $c_{p}\in\mathbb{R}^{n}$ and $d_{p}\in\mathbb{R}$.
\end{definition}

The semantics of LTL$_{-X}^{\delta}$ formulas are defined as follows.

\begin{definition}\label{Def:satisfaction2}
Let $\sigma\in (\mathbb{R}^{n})^{\omega}$ and $\delta\in\mathbb{R}_{+}$.
The satisfaction of LTL$_{-X}^{\delta}$ formula $\varphi$ at position $i\in \mathbb{N}$ of $\sigma$, denoted by $\sigma[i]\models\varphi$, is defined similarly to Definition~\ref{Def:satisfaction} except for the cases where either $\varphi=[\delta]p$ or $\varphi=[\delta]\neg p$:

($1'$) $\sigma[i]\models [\delta]p$ iff ${\cal{B}}_{\delta}(\sigma[i])\subseteq p$;

($2'$) $\sigma[i]\models [\delta]\neg p$ iff ${\cal{B}}_{\delta}(\sigma[i])\cap p=\emptyset$.

The infinite sequence $\sigma$ satisfies an LTL$_{-X}^{\delta}$ formula $\varphi$, written as $\sigma\models\varphi$, if and only if $\sigma[1]\models \varphi$.
\end{definition}

In order to transform $\varphi_{0}$ to the desired $\varphi_{0}'$, we introduce the following function.

\begin{definition}\label{Def:transformation}
Let $\delta\in\mathbb{R}_{+}$. The function $tr_{\delta}:\mathrm{LTL}_{-X}\rightarrow\mathrm{LTL}_{-X}^{\delta}$ is inductively defined as follows:

(1) $tr_{\delta}(p)=[\delta]p$;

(2) $tr_{\delta}(\neg p)=[\delta]\neg p$;

(3) $tr_{\delta}(\varphi_{1}\wedge\varphi_{2})=tr_{\delta}(\varphi_{1})\wedge tr_{\delta}(\varphi_{2})$;

(4) $tr_{\delta}(\varphi_{1}\vee\varphi_{2})=tr_{\delta}(\varphi_{1})\vee tr_{\delta}(\varphi_{2})$;

(5) $tr_{\delta}(\varphi_{1}\mathbf{U}\varphi_{2})=tr_{\delta}(\varphi_{1})\mathbf{U} tr_{\delta}(\varphi_{2})$;

(6) $tr_{\delta}(\varphi_{1}\tilde{\mathbf{U}}\varphi_{2})=tr_{\delta}(\varphi_{1})\tilde{\mathbf{U}} tr_{\delta}(\varphi_{2})$.
\end{definition}

The following result reveals that, for any LTL$_{-X}$ formula $\varphi_{0}$, under some assumption, if the sample trajectory satisfies $tr_{\delta}(\varphi_{0})$ then the original trajectory of $\Sigma$ satisfies $\varphi_{0}$.

\begin{theorem}\label{Th:logical connection 1'}
Let $\Sigma$ be a linear control system with state space $X$, $\mathbf{x}:\mathbb{R}_{+}^{0}\rightarrow X$ a trajectory of $\Sigma$, $\sigma_{{\mathbf{x}}} = \mathbf{x}(0)\mathbf{x}(\tau)\mathbf{x}(2\tau)\cdots$ and $\delta\in\mathbb{R}_{+}$.
If $\|\mathbf{x}(t)-\mathbf{x}((n-1)\tau)\|\leq \delta$ for any $n\in\mathbb{N}$ and $t\in [(n-1)\tau,n\tau)$, then for any LTL$_{-X}$ formula $\varphi_{0}$, ${\sigma_{{\mathbf{x}}}\models tr_{\delta}(\varphi_{0})}$ implies $\mathbf{x}\models\varphi_{0}$.
\end{theorem}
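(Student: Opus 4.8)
The plan is to prove, by induction on the structure of $\varphi$, a statement uniform in the sampling position rather than only the initial one. Precisely, I would establish: under the hypothesis of the theorem, for every LTL$_{-X}$ formula $\varphi$, every $n\in\mathbb{N}$, and every $t\in[(n-1)\tau,n\tau)$, if $\sigma_{{\mathbf{x}}}[n]\models tr_{\delta}(\varphi)$ then $\mathbf{x}(t)\models\varphi$. The theorem is the instance $n=1$, $t=0$, since $\sigma_{{\mathbf{x}}}[1]=\mathbf{x}(0)$ and $\mathbf{x}\models\varphi_{0}$ means $\mathbf{x}(0)\models\varphi_{0}$. The reason for ranging over all $n$ and all $t$ in the $n$-th sampling interval is that it makes the inductive hypothesis usable for the temporal operators: applied to a subformula at position $m$, it yields $\mathbf{x}(s)\models\varphi_{i}$ for the entire interval $s\in[(m-1)\tau,m\tau)$, and the half-open intervals $[(n-1)\tau,n\tau)$ ($n\geq 1$) partition $\mathbb{R}_{+}^{0}$ because $\tau>0$.

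For the base cases $\varphi=p$ and $\varphi=\neg p$, the key is the hypothesis $\|\mathbf{x}(t)-\mathbf{x}((n-1)\tau)\|\leq\delta$, which places $\mathbf{x}(t)$ inside the closed ball ${\cal B}_{\delta}(\sigma_{{\mathbf{x}}}[n])$. Hence $\sigma_{{\mathbf{x}}}[n]\models[\delta]p$, i.e. ${\cal B}_{\delta}(\sigma_{{\mathbf{x}}}[n])\subseteq p$, forces $\mathbf{x}(t)\in p$, and $\sigma_{{\mathbf{x}}}[n]\models[\delta]\neg p$, i.e. ${\cal B}_{\delta}(\sigma_{{\mathbf{x}}}[n])\cap p=\emptyset$, forces $\mathbf{x}(t)\notin p$; these are exactly clauses (1) and (2) of Definition~\ref{Def:satisfaction of x'}. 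The conjunction and disjunction cases are immediate from the inductive hypothesis, since $tr_{\delta}$ commutes with $\wedge$ and $\vee$ and both semantics evaluate these propositionally at a single point.

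For $\varphi=\varphi_{1}\mathbf{U}\varphi_{2}$, I unfold the discrete semantics (Definition~\ref{Def:satisfaction2}, reducing to clause (5) of Definition~\ref{Def:satisfaction}): from $\sigma_{{\mathbf{x}}}[n]\models tr_{\delta}(\varphi_{1})\mathbf{U}\,tr_{\delta}(\varphi_{2})$ I obtain some $j\geq n$ with $\sigma_{{\mathbf{x}}}[j]\models tr_{\delta}(\varphi_{2})$ and $\sigma_{{\mathbf{x}}}[k]\models tr_{\delta}(\varphi_{1})$ for all $n\leq k<j$. The inductive hypothesis then gives $\mathbf{x}(s)\models\varphi_{2}$ for $s\in[(j-1)\tau,j\tau)$ and $\mathbf{x}(s)\models\varphi_{1}$ for $s\in[(n-1)\tau,(j-1)\tau)$. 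Choosing $t_{1}=(j-1)\tau$ (or $t_{1}=t$ in the degenerate case $j=n$) and any $t_{2}>t_{1}$, I verify clause (5-a) of Definition~\ref{Def:satisfaction of x'}, using $t<n\tau\leq(j-1)\tau$ to guarantee $t\leq t_{1}$ and that $\varphi_{1}$ holds throughout $[t,t_{1})$.

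The main obstacle is the release operator $\varphi=\varphi_{1}\tilde{\mathbf{U}}\varphi_{2}$, whose continuous semantics (clauses (6-a), (6-b) of Definition~\ref{Def:satisfaction of x'}) is universally quantified over all pairs $t_{1}<t_{2}$, so witnesses cannot simply be exhibited. I plan to split on whether $\sigma_{{\mathbf{x}}}[j]\models tr_{\delta}(\varphi_{2})$ holds for every $j\geq n$. If it does, the inductive hypothesis makes $\varphi_{2}$ hold on all of $[(n-1)\tau,\infty)\supseteq[t,\infty)$, and both (6-a) and (6-b) hold vacuously. Otherwise, let $j_{0}$ be the least failure position; the discrete release clause applied at $j_{0}$ supplies $k_{0}$ with $n\leq k_{0}<j_{0}$ and $\sigma_{{\mathbf{x}}}[k_{0}]\models tr_{\delta}(\varphi_{1})$, whence the inductive hypothesis yields $\varphi_{2}$ on $[(n-1)\tau,(j_{0}-1)\tau)$ and $\varphi_{1}$ on $[(k_{0}-1)\tau,k_{0}\tau)$, with $k_{0}\tau\leq(j_{0}-1)\tau$. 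The crux is then an interval-arithmetic argument valid for every $t\leq t_{1}<t_{2}$: the antecedent of (6-a) (respectively (6-b)) forces $t_{1}\geq(j_{0}-1)\tau$, after which the point $t^{*}=\max(t,(k_{0}-1)\tau)$ lies in $[(k_{0}-1)\tau,k_{0}\tau)\cap[t,t_{1}]$ and witnesses $\mathbf{x}(t^{*})\models\varphi_{1}$. Carefully verifying these inclusions (using $\tau>0$, $k_{0}\geq n$, and $k_{0}\tau\leq(j_{0}-1)\tau$) is where the real work lies, and I would present (6-a) and (6-b) as two closely parallel sub-arguments.
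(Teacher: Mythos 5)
Your proposal is correct and takes essentially the same route as the paper's proof: a structural induction on the formula, with the strengthened claim that $\sigma_{\mathbf{x}}[n]\models tr_{\delta}(\varphi)$ implies $\mathbf{x}(t)\models\varphi$ for every $t$ in the $n$-th sampling interval, instantiated at $n=1$, $t=0$. The differences are to your credit: your half-open intervals $[(n-1)\tau,n\tau)$ match the theorem's hypothesis exactly (the paper's closed intervals $(i-1)\tau\leq t\leq i\tau$ need a continuity argument at the right endpoint that it never states), and you work out the release case $\varphi_{1}\tilde{\mathbf{U}}\varphi_{2}$ in full --- with the correct case split on whether $tr_{\delta}(\varphi_{2})$ holds at all positions, minimality of the failure position $j_{0}$, and the witness $t^{*}=\max(t,(k_{0}-1)\tau)$ --- whereas the paper relegates this, the only genuinely delicate case, to a ``routine case analysis'' and proves only the atomic and until cases.
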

\begin{proof}
Suppose that $\|\mathbf{x}(t)-\mathbf{x}((n-1)\tau)\|\leq \delta$ for any $n\in\mathbb{N}$ and $t\in [(n-1)\tau,n\tau)$.
To complete the proof, it is enough to show that  for any LTL$_{-X}$ formula $\varphi_{0}$ and for any $i\in\mathbb{N}$ and $t\in\mathbb{R}_{+}^{0}$ with $(i-1)\tau\leq t\leq i\tau$, if ${\sigma_{{\mathbf{x}}}[i]\models tr_{\delta}(\varphi_{0})}$ then $\mathbf{x}^{t}\models\varphi_{0}$, where $\mathbf{x}^{t}(t')=\mathbf{x}(t+t')$ for any $t'\in\mathbb{R}_{+}^{0}$.
We proceed by induction on $\varphi_{0}$. The proof is a routine case analysis. We give two sample cases.

 \textbf{Case 1}. $\varphi_{0}=p$.
Let $i\in\mathbb{N}$, $(i-1)\tau\leq t\leq i\tau$ and ${\sigma_{{\mathbf{x}}}[i]\models tr_{\delta}(\varphi_{0})}$.
Then by Definition~\ref{Def:transformation}, we obtain ${\sigma_{{\mathbf{x}}}[i]\models [\delta]p}$.
It follows from Definition~\ref{Def:satisfaction2} that ${\cal{B}}_{\delta}(\sigma_{{\mathbf{x}}}[i])\subseteq p$.
Then since $\|\mathbf{x}(t')-\mathbf{x}((n-1)\tau)\|\leq \delta$ for any $n\in\mathbb{N}$ and $t'\in [(n-1)\tau,n\tau)$, we have $\mathbf{d}(\sigma_{{\mathbf{x}}}[i], \mathbf{x}(t))\leq\delta$.
This, together with ${\cal{B}}_{\delta}(\sigma_{{\mathbf{x}}}[i])\subseteq p$, implies that $\mathbf{x}(t)\in p$.
Thus by Definition~\ref{Def:satisfaction of x'}, we get $\mathbf{x}(t)\models\varphi_{0}$ and then $\mathbf{x}^{t}\models\varphi_{0}$.

 \textbf{Case 2}. $\varphi_{0}=\varphi_{1}\mathbf{U}\varphi_{2}$.
Let $i\in\mathbb{N}$, $(i-1)\tau\leq t\leq i\tau$ and ${\sigma_{{\mathbf{x}}}[i]\models tr_{\delta}(\varphi_{0})}$.
Then it follows from Definition~\ref{Def:transformation} that ${\sigma_{{\mathbf{x}}}[i]\models tr_{\delta}(\varphi)_{1}\mathbf{U}tr_{\delta}(\varphi)_{2}}$.
Thus by Definition~\ref{Def:satisfaction2}, there exists $j\geq i$ such that $\sigma_{{\mathbf{x}}}[j]\models\varphi_{2}$ and for all $k\in\mathbb{N}$ with $i\leq k<j$, we have  $\sigma_{{\mathbf{x}}}[k]\models\varphi_{1}$.
So by induction hypothesis, we obtain $\mathbf{x}^{(j-1)\tau}\models\varphi_{2}$ and $\mathbf{x}^{t_{1}}\models \varphi_{1}$ for any $k\in\mathbb{N}$ and $t_{1}\in\mathbb{R}_{+}^{0}$ with $i\leq k<j$ and $(k-1)\tau \leq t_{1}< k\tau$.
Then it follows that $\mathbf{x}((j-1)\tau)\models\varphi_{2}$ and $\mathbf{x}(t_{1})\models \varphi_{1}$ for any $k\in\mathbb{N}$ and $t_{1}\in\mathbb{R}_{+}^{0}$ with $i\leq k<j$ and $(k-1)\tau \leq t_{1}< k\tau$.
Therefore, by Definition~\ref{Def:satisfaction of x'}, we get $\mathbf{x}(t)\models \varphi_{1}\mathbf{U}\varphi_{2}$ and then $\mathbf{x}^{t}\models \varphi_{1}\mathbf{U}\varphi_{2}$.
\qquad
\end{proof}
\subsection{Transforming specifications for $T_{\tau}(\Sigma)$ to ones for $T_{\tau,\eta,\mu}(\Sigma)$}\label{Sec:logical connection 2}
This subsection will concern itself with the transformation from $tr_{\delta}(\varphi_{0})$ for $T_{\tau}(\Sigma)$ to specification $\varphi_{0}''$ for finite abstractions of $\Sigma$.
Similar to the function $tr_{\delta}$, we introduce a transform function below.

\begin{definition}\label{Def:transformation 2}
Let $\varepsilon, \delta \in \mathbb{R}_{+}$. The function $tr_{\varepsilon}^{\delta}:\mathrm{LTL}_{-X}^{\delta}\rightarrow\mathrm{LTL}_{-X}^{\delta +\varepsilon}$  is defined as for each LTL$_{-X}^{\delta}$ formula $\psi$, $tr_{\varepsilon}^{\delta}(\psi)$ is obtained from $\psi$ by replacing $[\delta]$ by $[\delta +\varepsilon]$.
\end{definition}

In the rest of this subsection, we want to show that under some assumptions, for any $\varepsilon, \delta,\tau,\eta,\mu \in \mathbb{R}_{+}$, finite abstraction $T\in T_{\tau,\eta,\mu}(\Sigma)$ and LTL$_{-X}^{\delta}$ formula $\psi$, if specification $tr_{\varepsilon}^{\delta} (\psi)$ is satisfied by $T$ under control, then $\psi$ is satisfied by $T_{\tau}(\Sigma)$ under control.
To this end, some notions related to control strategy are introduced below.

\begin{definition}\label{Def:strategy}
A control strategy for an alternating transition system $T=(Q,A,B,\longrightarrow,O,H)$ is a function $f:Q^{+}\rightarrow 2^{A}-\{\emptyset\}$. For any $q\in Q$, the outcomes $Out^{n}_{T}(q,f)$ $(n\in\mathbb{N})$ and $Out_{T}(q,f)$  of $f$ from $q$ are defined as follows:
\begin{equation*}
\begin{aligned}Out^{n}_{T}(q,f)& =\{s\in Q^{n}:  s[1]=q\ \mathrm{and} \forall 1\leq i<n \exists a_{i}\in f(s[1,i]) \exists b_{i}\in B (s[i]\xrightarrow{a_{i},b_{i}} s[i+1])\},
 \\
 Out_{T}(q,f)&=\{\sigma\in Q^{\omega}: \sigma[1] =q\ \mathrm{and} \forall i\in\mathbb{N} \exists a_{i}\in f(\sigma [1,i]) \exists b_{i}\in B (\sigma [i]\xrightarrow{a_{i},b_{i}} \sigma[i+1])\}.
\end{aligned}
\end{equation*}
Furthermore, we define $Out^{+}_{T}(q,f)$ as:
$Out^{+}_{T}(q,f)=\bigcup_{n\in\mathbb{N}}Out^{n}_{T}(q,f)$.
\end{definition}

If alternating transition system $T$ is known from the context, we often omit subscripts in $Out^{n}_{T}(q,f)$, $Out_{T}(q,f)$ and $Out^{+}_{T}(q,f)$.

\begin{lemma}\label{Lem:outcomes}
Let $k\in \mathbb{N}$. Then $Out^{k+1}_{T}(q,f)=\{s\in Q^{k+1}:  s[1,k]\in Out^{k}_{T}(q,f)\textrm{ and }\exists a_{k}\in f(s[1,k]) \exists b_{k}\in B (s[k]\xrightarrow{a_{k},b_{k}} s[k+1])
\}$.\end{lemma}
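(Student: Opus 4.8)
The plan is to prove the set equality by a direct double inclusion that merely unfolds Definition~\ref{Def:strategy} and splits the bounded universal quantifier at its last index. Fix any $s\in Q^{k+1}$. By definition, $s\in Out^{k+1}_{T}(q,f)$ holds precisely when $s[1]=q$ together with the conjunction, over all $i$ with $1\leq i<k+1$, of the clause $\exists a_{i}\in f(s[1,i])\ \exists b_{i}\in B\ (s[i]\xrightarrow{a_{i},b_{i}} s[i+1])$. Since $1\leq i<k+1$ is the same as $1\leq i\leq k$, the first step is to separate this range into the two pieces $1\leq i<k$ and the single index $i=k$.

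First I would identify the piece $1\leq i<k$, taken together with $s[1]=q$, as exactly the assertion $s[1,k]\in Out^{k}_{T}(q,f)$. Here the only point needing care is that passing from $s$ to its prefix $s[1,k]$ changes nothing relevant: for every $i\leq k$ the prefix $s[1,i]$ is common to $s$ and to $s[1,k]$, so $f(s[1,i])=f((s[1,k])[1,i])$; likewise for $i<k$ the states $s[i]$ and $s[i+1]$ coincide with $(s[1,k])[i]$ and $(s[1,k])[i+1]$, so the transition clause is literally the same whether read in $s$ or in $s[1,k]$. Also $s[1]=q$ is the same statement as $(s[1,k])[1]=q$. Hence the conjunction of $s[1]=q$ and the clauses for $1\leq i<k$ is, verbatim, membership of $s[1,k]$ in $Out^{k}_{T}(q,f)$.

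Next I would observe that the remaining index $i=k$ contributes exactly the clause $\exists a_{k}\in f(s[1,k])\ \exists b_{k}\in B\ (s[k]\xrightarrow{a_{k},b_{k}} s[k+1])$, which is precisely the second conjunct appearing in the right-hand set. Combining the two pieces, $s\in Out^{k+1}_{T}(q,f)$ is equivalent to $s[1,k]\in Out^{k}_{T}(q,f)$ together with the existence of suitable $a_{k},b_{k}$, i.e.\ to membership of $s$ in the right-hand side. Since $s\in Q^{k+1}$ was arbitrary, the two sets coincide.

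There is essentially no hard step here; the lemma is a bookkeeping identity. The only thing to be careful about is the prefix-agreement remark in the middle paragraph, making explicit that restricting a length-$(k+1)$ string to its length-$k$ prefix leaves all the relevant control-set values $f(s[1,i])$ and transitions $s[i]\xrightarrow{a_{i},b_{i}} s[i+1]$ (for $i<k$) untouched; without stating this one cannot cleanly rewrite the clauses for $1\leq i<k$ as the single condition $s[1,k]\in Out^{k}_{T}(q,f)$.
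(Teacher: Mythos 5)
Your proof is correct and is exactly the routine unfolding that the paper has in mind: the paper's own proof of Lemma~\ref{Lem:outcomes} is simply the word ``Straightforward,'' and your argument (splitting the quantifier range $1\leq i<k+1$ into $1\leq i<k$ plus $i=k$, with the prefix-agreement observation) supplies precisely the bookkeeping being elided. Nothing is missing and nothing differs in approach.
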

\begin{proof}
Straightforward.
\end{proof}

\begin{definition}\label{Def:control satisfy}
Let $\Sigma$ be a linear control system and $q$ a state of $T_{\tau}(\Sigma)$. We say that the formula $\varphi$ is satisfied by $q$ under control if and only if there exists a control strategy $f$ such that $\sigma\models\varphi$ for all $\sigma\in Out(q,f)$.
Furthermore, we say that the formula $\varphi$ is satisfied by $T_{\tau}(\Sigma)$ under control if and only if there exists a state $q$ of $T_{\tau}(\Sigma)$ such that $\varphi$ is satisfied by $q$ under control.

Let  $\tau,\eta,\mu\in\mathbb{R}_{+}$, $T\in T_{\tau,\eta,\mu}(\Sigma)$ and let $q'$ be a state of $T$. Similarly, we may define that the formula $\varphi$ is satisfied by  $q'$ and $T$ under control.
\end{definition}

\begin{lemma}\label{Lem:bisimilar state to bisimilar sequence}
Let $T_{i}=(Q_{i},A_{i}\times B_{i}, \xrightarrow[i]{}, O, H_{i})$ ($i=1,2$) be two metric, non-blocking alternating transition systems with the same observation set and the same metric $\textbf{d}$ over $O$.
Suppose that $Q_1$ is finite and $f:(Q_{1})^{+}\rightarrow 2^{A_{1}}-\{\emptyset\}$ is a control strategy.
For any $q_{1}\in Q_{1}$, $q_{2}\in Q_{2}$ and $\varepsilon\in\mathbb{R}_{+}$, if $q_{1}\sim_{\varepsilon} q_{2}$ then there exists a control strategy $f':(Q_{2})^{+}\rightarrow 2^{A_{2}}-\{\emptyset\}$ such that for any $\sigma_{2}\in Out(q_{2},f')$, $\sigma_{1}\sim_{\varepsilon}\sigma_{2}$ for some $\sigma_{1}\in Out(q_{1},f)$ \footnote{For any (finite or infinite) sequences $\alpha_{1}$ and $\alpha_{2}$, $\alpha_{1}\sim_{\varepsilon}\alpha_{2}$ if and only if $|\alpha_{1}|=|\alpha_{2}|$ and $\alpha_{1}[i]\sim_{\varepsilon}\alpha_{2}[i]$ for  all $i\leq |\alpha_{1}|$.}.
\end{lemma}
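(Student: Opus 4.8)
The plan is to build the $T_2$-strategy $f'$ so that it \emph{shadows} the given strategy $f$ along a matched play in $T_1$, the matching being supplied one step at a time by the alternating bisimulation. The engine is clause (ii) of Proposition~\ref{Pro:approximate bisimulation}: whenever $p_1\sim_\varepsilon p_2$, each control label $a_1\in A_1$ admits a companion label $a_2\in A_2$ such that every transition $p_2\xrightarrow{a_2,b_2}q_2'$ (for an arbitrary disturbance $b_2$) is answered by some $p_1\xrightarrow{a_1,b_1}q_1'$ with $q_1'\sim_\varepsilon q_2'$. After fixing selection functions (legitimate since $Q_1$, and in the intended application all the label sets, are finite) I would record this companion as $\kappa(a_1,p_1,p_2):=a_2$.

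Then I would define $f'$ together with a matching map $g$ from $f'$-consistent $T_2$-histories to $T_1$-histories, by recursion on length (well founded because $f'$-consistency of a length-$(k{+}1)$ history refers only to $f'$ on histories of length $\le k$). At length $1$, set $g(q_2):=q_1$, using $q_1\sim_\varepsilon q_2$. Suppose $g$ is defined on the $f'$-consistent histories of length $k$. For such a history $t$, with $p_1:=g(t)[end]$ and $p_2:=t[end]$ satisfying $p_1\sim_\varepsilon p_2$, set $f'(t):=\{\kappa(a_1,p_1,p_2):a_1\in f(g(t))\}$, a nonempty subset of $A_2$; on non-consistent histories let $f'$ be any fixed nonempty subset of $A_2$. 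A length-$(k{+}1)$ history $t\,q_2'$ is then $f'$-consistent iff $p_2\xrightarrow{a_2,b_2}q_2'$ for some $a_2\in f'(t)$ and $b_2\in B_2$; for each such history I choose one witnessing $a_2$ (hence an $a_1\in f(g(t))$ with $\kappa(a_1,p_1,p_2)=a_2$) and a $b_2$, and apply clause (ii) of Proposition~\ref{Pro:approximate bisimulation} to $p_1\sim_\varepsilon p_2$ to obtain $b_1,q_1'$ with $p_1\xrightarrow{a_1,b_1}q_1'$ and $q_1'\sim_\varepsilon q_2'$, defining $g(t\,q_2'):=g(t)\,q_1'$. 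By Lemma~\ref{Lem:outcomes} and $a_1\in f(g(t))$ this keeps $g(t\,q_2')\in Out^{k+1}(q_1,f)$ and pointwise $\varepsilon$-close to $t\,q_2'$; moreover $g$ is prefix-monotone, since each step appends exactly one state.

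Finally I would read off the conclusion. Given any $\sigma_2\in Out(q_2,f')$, every prefix $\sigma_2[1,k]$ is $f'$-consistent, so $g(\sigma_2[1,k])\in Out^{k}(q_1,f)$ is pointwise $\varepsilon$-close to $\sigma_2[1,k]$; by prefix-monotonicity these glue into a single infinite $\sigma_1$ with $\sigma_1[1]=q_1$, each of whose steps uses a control label prescribed by $f$ on the corresponding prefix, so $\sigma_1\in Out(q_1,f)$ and $\sigma_1\sim_\varepsilon\sigma_2$. Equivalently, one may skip the explicit gluing: the sets $M_k=\{s_1\in Out^{k}(q_1,f):s_1\sim_\varepsilon\sigma_2[1,k]\}$ are all nonempty and, ordered by prefix, form a finitely branching infinite tree precisely because $Q_1$ is finite, so K\"onig's lemma produces the branch $\sigma_1$; this is where the finiteness hypothesis is genuinely used.

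The main obstacle is making the quantifier alternation of the alternating bisimulation match the game semantics of $Out$: the companion control label $a_2$ must be committed \emph{before} the disturbance $b_2$ is revealed, whereas $Out(q_2,f')$ ranges over all admissible control labels \emph{and} all disturbances. Defining $f'(t)$ to be exactly the set of companions of the labels in $f(g(t))$ is what forces every admissible $a_2$ to descend from some $a_1\in f(g(t))$, and clause (ii) of Proposition~\ref{Pro:approximate bisimulation} then absorbs the adversarial $b_2$; the remaining care is to keep $f'$ and $g$ genuine functions of the state-history (whence the fixed selection functions) and to verify via Lemma~\ref{Lem:outcomes} that the appended $T_1$-history remains an $f$-outcome.
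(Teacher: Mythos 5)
Your proof is correct, and it shares the paper's engine---an inductive, level-by-level construction of the shadow strategy $f'$ driven by clause (ii) of Proposition~\ref{Pro:approximate bisimulation}---but the bookkeeping is genuinely different, in a way that changes where (and whether) the finiteness of $Q_1$ is used. The paper takes $f'$ on the $n$-step outcomes to consist of \emph{all} labels $a_2$ admitting \emph{some} $\varepsilon$-matched history $s_1'\in Out^{n}(q_1,f)$; the matching is purely existential, so for an infinite outcome $\sigma_2\in Out(q_2,f')$ the matched finite histories $s^{n}\sim_{\varepsilon}\sigma_2[1,n]$ obtained level by level need not cohere, and the paper must invoke finiteness of $Q_1$ in a K\"onig-type extraction of a chain of prefixes to glue them into a single $\sigma_1$. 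You instead carry a functional, prefix-monotone matching map $g$ along with the recursion, so coherence is built in and the final gluing is immediate; K\"onig's lemma survives only in your optional alternative ending. Consequently your primary route never uses the hypothesis that $Q_1$ is finite: what it needs instead are appeals to choice in fixing $\kappa$ and the per-history witnesses, appeals the paper's proof also makes implicitly (e.g.\ in its Claims 1 and 3). Two small inaccuracies, neither fatal: in the intended application ($T_1$ the finite abstraction, $T_2=T_{\tau}(\Sigma)$) the sets $Q_2$, $A_2$, $B_2$ are infinite, so your selection function $\kappa$ is licensed by the axiom of choice rather than by finiteness; and your closing claim that K\"onig's lemma is ``where the finiteness hypothesis is genuinely used'' is accurate for the paper's proof and for your alternative ending, but your main construction in fact shows that hypothesis to be dispensable.
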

\begin{proof}
Let $\varepsilon\in \mathbb{R}_{+}$, $q_{1}\in Q_{1}$, $q_{2}\in Q_{2}$ and $q_{1}\sim_{\varepsilon} q_{2}$. In order to obtain the desired control strategy $f'$, we define the subset $\triangle_{n}$ of $(Q_{2})^{n}$ and the function $f_{n}:\triangle_{n}\rightarrow 2^{A_{2}}$ $(n\in \mathbb{N})$ inductively as follows:

 $\triangle_{1}=\{q_{2}\}$ and the function $f_{1}:\triangle_{1}\rightarrow 2^{A_{2}}$ is defined as
\begin{equation*}
\begin{aligned} f_{1}(q_{2})\triangleq\{a_{2}\in A_{2}:  & \exists a_{1}\in f(q_{1})\forall b_{2}\in B_{2} \forall q_{2}'\in Q_{2} \ \
\ \\
& (q_{2}\xrightarrow[2]{a_{2},b_{2}}q_{2}'\Rightarrow \exists b_{1}\in B_{1}\exists q_{1}'(q_{1}\xrightarrow[1]{a_{1},b_{1}}q_{1}'\textrm{ and } q_{1}'\sim_{\varepsilon}q_{2}'))\}.
\end{aligned}
\end{equation*}

Assume that $\triangle_{k}$ and $f_{k}$ have been defined. Now we define $\triangle_{k+1}$ and  $f_{k+1}$ below:
\begin{center}
$\triangle_{k+1}\triangleq\{s_{2}q_{2}': s_{2}\in\triangle_{k}\
\mathrm{and}\ \exists a_{2}\in f_{k}(s_{2})\exists b_{2}\in B_{2} (s_{2}[k]\xrightarrow[2]{a_{2},b_{2}}q_{2}')\}$
\end{center}
and the function $f_{k+1}:\triangle_{k+1}\rightarrow 2^{A_{2}}$ is defined as for any $s_{2}'\in\triangle_{k+1}$,
\begin{equation*}
\begin{aligned} f_{k+1}(s_{2}')\triangleq\{a_{2}\in A_{2}:  & \exists s_{1}'\in Out^{k+1}(q_{1},f)(s_{1}'\sim_{\varepsilon} s_{2}' \ \mathrm{and}\ \exists a_{1}\in f(s_{1}')\forall b_{2}\in B_{2} \forall q_{2}'\in Q_{2} \\ & ( s_{2}'[end]\xrightarrow[2]{a_{2},b_{2}}q_{2}'\Rightarrow  \exists b_{1}\in B_{1} \exists q_{1}'(s_{1}'[end]\xrightarrow[1]{a_{1},b_{1}}q_{1}'\textrm{ and } q_{1}'\sim_{\varepsilon}q_{2}'))\}.
\end{aligned}
\end{equation*}
Based on the above definition, we may define $f':(Q_{2})^{+}\rightarrow 2^{A_{2}}$ as follows:
\begin{equation*}
f'(s)=\left\{\begin{aligned}& f_{|s|}(s)   &\textrm{  if } s\in \triangle_{|s|}\\
& A_{2}   &\textrm{otherwise}
\end{aligned}\right.
\end{equation*}
To show that $f'$ is the desired control strategy, we prove the following three claims in turn.

 \textbf{Claim 1}.
For any $n\in\mathbb{N}$, we have

($1_{n}$) $\triangle_{n}\neq\emptyset$;

($2_{n}$) for any $s_{2}\in\triangle_{n}$, there exists $s_{1}\in Out^{n}(q_{1},f)$ such that $s_{1}\sim_{\varepsilon} s_{2}$;

(3$_{n}$) for any $s_{2}\in\triangle_{n}$, $f_{n}(s_{2})\neq\emptyset$.

We proceed by induction on $n$.

If $n=1$ then $(1_{n})$ and $(2_{n})$ hold trivially. Since $f$ is a control strategy, we have $f(q_{1})\neq\emptyset$. Let $a_{1}\in f(q_{1})$. Then by $q_{1}\sim_{\varepsilon}q_{2}$ and Proposition~\ref{Pro:approximate bisimulation}, there exists $a_{2}\in A_{2}$ such that
 \begin{equation*}
 \forall b_{2}\in B_{2} \forall q_{2}'\in Q_{2}(q_{2}\xrightarrow[2]{a_{2},b_{2}}q_{2}'\Rightarrow \exists b_{1}\in B_{1}\exists q_{1}'(q_{1}\xrightarrow[1]{a_{1},b_{1}}q_{1}'\textrm{ and } q_{1}'\sim_{\varepsilon}q_{2}'))).
 \end{equation*}
Thus $a_{2}\in f_{n}(q_{2})$ and then $(3_{n})$ holds.

Suppose that $(1_{k})$, $2_{k}$ and $(3_{k})$ hold. We prove $1_{k+1}$, $2_{k+1}$ and $3_{k+1}$ in turn.

($1_{k+1}$) By induction hypothesis, we get $\triangle_{k}\neq\emptyset$ and $f_{k}(s_{2})\neq\emptyset$ for any $s_{2}\in\triangle_{k}$. Thus there exists $s_{2}\in\triangle_{k}$ and $a_{2}\in f_{k}(s_{2})$. Let $b_{2}\in B_{2}$. Then $s_{2}[k]\xrightarrow[2]{a_{2},b_{2}} q_{2}'$ for some $q_{2}'\in Q_{2}$. Therefore, $s_{2}q_{2}'\in\triangle_{k+1}$ and then $\triangle_{k+1}\neq\emptyset$.

($2_{k+1}$) Let $s_{2}'\in\triangle_{k+1}$. Then by the definition of $\triangle_{k+1}$, there exists $a_{2}\in f_{k}(s_{2}'[1,k])$ and $b_{2}\in B_{2}$ such that $s_{2}'[k]\xrightarrow[2]{a_{2},b_{2}}s_{2}'[k+1]$.
Further, by the definition of $f_{k}$, there exists $s_{1}\in Out^{k}(q_{1},f)$ such that $s_{1}\sim_{\varepsilon} s_{2}'[1,k]$, $s_{1}[k]\xrightarrow[1]{a_{1},b_{1}}q_{1}'$ and $q_{1}'\sim_{\varepsilon}s_{2}'[k+1]$ for some $a_{1}\in f(s_{1})$, $b_{1}\in B_{1}$ and $q_{1}'\in Q_{1}$.
Thus we obtain $s_{1}q_{1}'\in Out^{k+1}(q_{1},f)$ and $s_{1}q_{1}'\sim_{\varepsilon}s_{2}'$.

($3_{k+1}$) Let $s_{2}'\in\triangle_{k+1}$. By $(2_{k+1})$, there exists $s_{1}'\in Out^{k+1}(q_{1},f)$ such that $s_{1}'\sim_{\varepsilon} s_{2}'$. So we have $s_{1}'[end]\sim_{\varepsilon} s_{2}'[end]$. On the other hand, since $f$ is a control strategy, we get $f(s_{1}')\neq\emptyset$. Then similar to the case $n=1$, we may show that $f_{k+1}(s_{2}')\neq\emptyset$.

  \textbf{Claim 2}.
$f'$ is a control strategy and $\triangle_{n}=Out^{n}(q_{2},f')$ for any $n\in \mathbb{N}$.

It follows from Claim 1 and the definition of $f'$ that $f'(s_{2})\neq\emptyset$ for any $s_{2}\in Q_{2}^{+}$. Thus by Definition~\ref{Def:strategy}, $f'$ is a control strategy. Next, we show that $\triangle_{n}=Out^{n}(q_{2},f')$ for any $n\in \mathbb{N}$.

If $n=1$ then $\triangle_{1}=\{q_{2}\}=Out^{1}(q_{2},f')$. Let $n=k+1$. By induction hypothesis, we obtain $\triangle_{k}=Out^{k}(q_{2},f')$.
Moreover, it follows from the definition of $f'$ that $f'(s_{2})=f_{k}(s_{2})$ for all $s_{2}\in\triangle_{k}$.
Then, since $\triangle_{k}=Out^{k}(q_{2},f')$, by the definition of $\triangle_{k+1}$ and Lemma~\ref{Lem:outcomes}, it is clear that $\triangle_{k+1}=Out^{k+1}(q_{2},f')$.

  \textbf{Claim 3}.
For any $\sigma_{2}\in Out(q_{2},f')$, there is $\sigma_{1}\in Out(q_{1},f)$ such that $\sigma_{1}\sim_{\varepsilon}\sigma_{2}$.

Let  $\sigma_{2}\in Out(q_{2},f')$.
By Claim 2, for each $n\in\mathbb{N}$, we have $\sigma_{2}[1,n]\in\triangle_{n}$. Then, by Claim~1, there exist a family of sequences $s^{n}\in Out^{n}(q_{1},f)$ ($n\in\mathbb{N}$) such that $s^{n}\sim_{\varepsilon}\sigma_{2}[1,n]$ for each $n\in\mathbb{N}$.
Further, since $Q_1$ is finite, it is easy to check that there exists an infinite sequence $i_1 i_2\cdots \in \mathbb{N}^{\omega}$ such that for any $j\in \mathbb{N}$, $i_j<i_{j+1}$ and $s^{i_{j}}$ is a proper prefix of $s^{i_{j+1}}$, i.e., $s^{i_{j}}\circ s=s^{i_{j+1}}$ for some $s\in (Q_1)^{+}$.
Clearly, for any $k\in \mathbb{N}$, there exists $j\in\mathbb{N}$ such that $k<i_j$.
Furthermore, for any $j,l,k\in\mathbb{N}$, if $k<i_j$ and $k<i_l$, then $s_{i_j}[k]=s_{i_l}[k]$.
We define an infinite $\sigma_1\in (Q_{1})^{\omega}$ as: for any $k\in \mathbb{N}$, if $k<i_j$ for some $j\in\mathbb{N}$, then we set $\sigma_1[k]=s^{i_j}[k]$.
It is easy to see that $\sigma_1$ is well-defined.
Then, since $s^{i_j}\in Out^{+}(q_{1},f)$ and $s^{i_j}\sim_{\varepsilon}\sigma_{2}[1,i_j]$ for all $j\in\mathbb{N}$, by Definition~\ref{Def:strategy}, we have $\sigma_1\in Out(q_1,f)$ and $\sigma_1\sim_{\varepsilon}\sigma_{2}$.
\end{proof}

\begin{lemma}\label{Lem:logical characterization of bisimilar sequence}
Let $\Sigma$ be a linear control system, $\varepsilon,\delta, \tau, \eta,\mu\in\mathbb{R}_{+}$ and let $T\in T_{\tau, \eta,\mu}(\Sigma)$ be a finite abstraction of $\Sigma$. For any trajectory $\sigma_{1}$ of $T_{\tau}(\Sigma)$ and any trajectory $\sigma_{2}$ of $T$, if $\sigma_{1}\sim_{\varepsilon}\sigma_{2}$ then for any LTL$_{-X}^{\delta}$ formula $\psi$, $\sigma_{2}\models tr^{\delta}_{\varepsilon}(\psi)$ implies $\sigma_{1}\models \psi$.
\end{lemma}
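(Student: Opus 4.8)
The plan is to strengthen the statement to a position-indexed form amenable to structural induction, and then to reduce everything to a single geometric observation about balls. Concretely, I would prove that for every $i\in\mathbb{N}$ and every LTL$_{-X}^{\delta}$ formula $\psi$, $\sigma_{2}[i]\models tr^{\delta}_{\varepsilon}(\psi)$ implies $\sigma_{1}[i]\models\psi$; the lemma then follows by taking $i=1$ and unfolding Definition~\ref{Def:satisfaction2}. The first ingredient I would record is that $\sigma_{1}\sim_{\varepsilon}\sigma_{2}$ gives, by the footnote convention in Lemma~\ref{Lem:bisimilar state to bisimilar sequence}, the pointwise relation $\sigma_{1}[i]\sim_{\varepsilon}\sigma_{2}[i]$ for every $i$. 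Since the observation function of $T_{\tau}(\Sigma)$ is the identity $1_{X}$ and that of $T$ is the natural inclusion $H:[X]_{\eta}\hookrightarrow\mathbb{R}^{n}$, clause (i) of Definition~\ref{Def:bis} then yields $\|\sigma_{1}[i]-\sigma_{2}[i]\|\leq\varepsilon$ for every $i$, both states being regarded as points of $\mathbb{R}^{n}$ under the common metric $\mathbf{d}$.

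The heart of the argument is the atomic case, and it rests on the inclusion ${\cal B}_{\delta}(\sigma_{1}[i])\subseteq{\cal B}_{\delta+\varepsilon}(\sigma_{2}[i])$, which follows from $\|\sigma_{1}[i]-\sigma_{2}[i]\|\leq\varepsilon$ and the triangle inequality: any $y$ with $\|y-\sigma_{1}[i]\|\leq\delta$ satisfies $\|y-\sigma_{2}[i]\|\leq\delta+\varepsilon$. For $\psi=[\delta]p$ we have $tr^{\delta}_{\varepsilon}(\psi)=[\delta+\varepsilon]p$ by Definition~\ref{Def:transformation 2}; if $\sigma_{2}[i]\models[\delta+\varepsilon]p$ then ${\cal B}_{\delta+\varepsilon}(\sigma_{2}[i])\subseteq p$ by Definition~\ref{Def:satisfaction2}, whence ${\cal B}_{\delta}(\sigma_{1}[i])\subseteq p$ by the inclusion, i.e.\ $\sigma_{1}[i]\models[\delta]p$. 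The case $\psi=[\delta]\neg p$ is identical with ``$\subseteq p$'' replaced by ``$\cap\,p=\emptyset$'': the same inclusion propagates disjointness from the larger ball to the smaller one.

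The remaining cases are a routine propagation of the induction hypothesis, using that $tr^{\delta}_{\varepsilon}$ commutes with every Boolean and temporal connective. For $\psi=\psi_{1}\wedge\psi_{2}$ and $\psi=\psi_{1}\vee\psi_{2}$ one simply splits the conjunction or disjunction and applies the induction hypothesis at the same position $i$. For $\psi=\psi_{1}\mathbf{U}\psi_{2}$, the witness index $j\geq i$ and the intermediate indices $k$ supplied by $\sigma_{2}[i]\models tr^{\delta}_{\varepsilon}(\psi_{1})\mathbf{U}\,tr^{\delta}_{\varepsilon}(\psi_{2})$ transfer verbatim to $\sigma_{1}$ via the induction hypothesis applied at positions $j$ and $k$; the release case $\psi_{1}\tilde{\mathbf{U}}\psi_{2}$ is handled dually. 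I do not expect a genuine obstacle here: the only subtlety is choosing the position-indexed induction hypothesis so that the temporal cases close, and verifying that the single geometric inclusion ${\cal B}_{\delta}(\sigma_{1}[i])\subseteq{\cal B}_{\delta+\varepsilon}(\sigma_{2}[i])$ is exactly what is needed to push satisfaction across the $\varepsilon$-gap in both the positive and the negated atomic clauses.
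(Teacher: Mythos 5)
Your proof is correct and follows essentially the same route as the paper's: structural induction on $\psi$, with the atomic cases settled by the triangle inequality (your ball inclusion ${\cal B}_{\delta}(\sigma_{1}[i])\subseteq{\cal B}_{\delta+\varepsilon}(\sigma_{2}[i])$ is just a set-level phrasing of the paper's pointwise estimate $\mathbf{d}(q,\sigma_{2}[1])\leq\mathbf{d}(q,\sigma_{1}[1])+\mathbf{d}(\sigma_{1}[1],\sigma_{2}[1])\leq\delta+\varepsilon$) and the temporal cases closed by transferring witness indices through the induction hypothesis. The only difference is bookkeeping: you carry a position-indexed hypothesis over a fixed pair $(\sigma_{1},\sigma_{2})$, while the paper applies the lemma itself to the $\sim_{\varepsilon}$-related suffixes $\sigma_{1}[i,\infty]$, $\sigma_{2}[i,\infty]$ --- two standard formulations of the same argument.
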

\begin{proof}
We argue by induction on the structure of $\psi$. We give two sample cases.

 \textbf{Case 1}. $\psi=[\delta]p$.
Then by Definition~\ref{Def:transformation 2}, we have $tr^{\delta}_{\varepsilon}(\psi)=[\delta +\varepsilon]p$.
Let $\sigma_{1}\sim_{\varepsilon}\sigma_{2}$ and $\sigma_{2}\models tr^{\delta}_{\varepsilon}(\psi)$.
Therefore, $\mathbf{d}(\sigma_{1}[1],\sigma_{2}[1])|\leq\varepsilon$  and $\sigma_{2}\models [\delta +\varepsilon]p$. It follows from Definition~\ref{Def:satisfaction2} that $\sigma_{2}[1]\models[\delta +\varepsilon]p$.
To prove $\sigma_{1}\models \psi$, by Definition~\ref{Def:satisfaction2}, it is enough to show that $q\in p$ for any $q\in \mathbb{R}^{n}$ with $\mathbf{d}(q,\sigma_{1}[1])\leq\delta$.

Let $q\in \mathbb{R}^{n}$ and $\mathbf{d}(q,\sigma_{1}[1])\leq\delta$.
 Then it follows from $\mathbf{d}(\sigma_{1}[1],\sigma_{2}[1])|\leq\varepsilon$ that
 $\mathbf{d}(q,\sigma_{2}[1])\leq \mathbf{d}(q,\sigma_{1}[1])+\mathbf{d}(\sigma_{1}[1],\sigma_{2}[1])\leq\delta +\varepsilon$.
So by $\sigma_{2}[1]\models[\delta +\varepsilon]p$ and  Definition~\ref{Def:satisfaction2}, we get $q\in p$.

 \textbf{Case 2}. $\psi=\psi_{1}\mathbf{U}\psi_{2}$. It follows from Definition~\ref{Def:transformation 2} that $tr^{\delta}_{\varepsilon}(\psi)= tr^{\delta}_{\varepsilon}(\psi_{1})\mathbf{U}tr^{\delta}_{\varepsilon}(\psi_{2})$.
Let $\sigma_{1}\sim_{\varepsilon}\sigma_{2}$ and $\sigma_{2}\models tr^{\delta}_{\varepsilon}(\psi)$.
Thus by Definition~\ref{Def:satisfaction2}, for some $j\in\mathbb{N}$, we obtain
$\sigma_{2}[j]\models tr^{\delta}_{\varepsilon}(\psi_{2})$ and $\sigma_{2}[i]\models tr^{\delta}_{\varepsilon}(\psi_{1})$ for all $1\leq i<j$.
  Then by Definition~\ref{Def:satisfaction2}, $\sigma_{2}[j,\infty]\models tr^{\delta}_{\varepsilon}(\psi_{2})$ and $\sigma_{2}[i,\infty]\models tr^{\delta}_{\varepsilon}(\psi_{1})$ for $1\leq i<j$.
Moreover, it follows from $\sigma_{1}\sim_{\varepsilon}\sigma_{2}$ that $\sigma_{1}[j,\infty]\sim_{\varepsilon}\sigma_{2}[j,\infty]$ and $\sigma_{1}[i,\infty]\sim_{\varepsilon}\sigma_{2}[i,\infty]$ for all $1\leq i<j$.
Further, by induction hypothesis,
 $\sigma_{1}[j,\infty]\models \psi_{2}$ and $\sigma_{1}[i,\infty]\models \psi_{1}$ for all $1\leq i<j$.
 Thus it follows from Definition~\ref{Def:satisfaction2} that $\sigma_{1}[j]\models \psi_{2}$ and $\sigma_{1}[i]\models \psi_{1}$ for all ${1\leq i<j}$. Therefore, we have $\sigma_{1}[1]\models \psi_{1}\mathbf{U} \psi_{2}$ and then
$\sigma_{1}\models \psi$.  \qquad
\end{proof}

Now, we arrive at the main result of this subsection.

\begin{theorem}\label{Th:logical connection origin}
Given an asymptotically stable linear control system $\Sigma$ below
\begin{eqnarray*}
\Sigma : \dot{x}=\mathbf{A}x+\mathbf{B}u+\mathbf{G}v,\ \ \  x\in X,u\in U,v\in V
\end{eqnarray*}
and $\varepsilon,\delta\in\mathbb{R}_{+}$.
For any $\tau,\eta,\mu\in\mathbb{R}_{+}$ satisfying $\|e^{\mathbf{A}\tau}\|\varepsilon +\mu+\eta/2 < \varepsilon$
and for any $T\in T_{\tau, \eta,\mu}(\Sigma)$ and LTL$_{-X}^{\delta}$ formula $\psi$, if $tr_{\varepsilon}^{\delta}(\psi)$ is satisfied by $T$ under control then $\psi$ is satisfied by $T_{\tau}(\Sigma)$ under control.
\end{theorem}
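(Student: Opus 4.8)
The plan is to chain the three earlier results together so that a winning control strategy on the finite abstraction $T$ is transported to a winning control strategy on the sampling system $T_{\tau}(\Sigma)$. First I would unfold the hypothesis via Definition~\ref{Def:control satisfy}: since $tr_{\varepsilon}^{\delta}(\psi)$ is satisfied by $T$ under control, there exist a state $q_{1}$ of $T$ and a control strategy $f$ on $T$ with $\sigma\models tr_{\varepsilon}^{\delta}(\psi)$ for every $\sigma\in Out_{T}(q_{1},f)$. By the same definition, the goal is to produce a state $q_{2}$ of $T_{\tau}(\Sigma)$ and a control strategy $f'$ with $\sigma\models\psi$ for every $\sigma\in Out_{T_{\tau}(\Sigma)}(q_{2},f')$.

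Next I would invoke Theorem~\ref{Th:bisi}. Because $\|e^{\mathbf{A}\tau}\|\varepsilon+\mu+\eta/2<\varepsilon$, the abstraction $T$ is $A\varepsilon A$ bisimilar to $T_{\tau}(\Sigma)$, and moreover any state of $T$ at distance $\leq\varepsilon$ from a state of $T_{\tau}(\Sigma)$ is $A\varepsilon A$ bisimilar to it. Since the states of $T$ lie in $[X]_{\eta}\subseteq X$, the point $q_{1}$ is itself a state of $T_{\tau}(\Sigma)$; taking $q_{2}=q_{1}$ gives $\mathbf{d}(q_{1},q_{2})=0\leq\varepsilon$ and hence $q_{1}\sim_{\varepsilon}q_{2}$. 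I would then apply Lemma~\ref{Lem:bisimilar state to bisimilar sequence} with $T_{1}=T$ (which is finite, as the lemma requires) and $T_{2}=T_{\tau}(\Sigma)$: from $q_{1}\sim_{\varepsilon}q_{2}$ and the strategy $f$, the lemma delivers a control strategy $f'$ on $T_{\tau}(\Sigma)$ such that every $\sigma_{2}\in Out_{T_{\tau}(\Sigma)}(q_{2},f')$ admits a matching $\sigma_{1}\in Out_{T}(q_{1},f)$ with $\sigma_{1}\sim_{\varepsilon}\sigma_{2}$.

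To finish, I would fix an arbitrary $\sigma_{2}\in Out_{T_{\tau}(\Sigma)}(q_{2},f')$ and its matching $\sigma_{1}\in Out_{T}(q_{1},f)$. The hypothesis gives $\sigma_{1}\models tr_{\varepsilon}^{\delta}(\psi)$, and we have $\sigma_{1}\sim_{\varepsilon}\sigma_{2}$. Feeding these into Lemma~\ref{Lem:logical characterization of bisimilar sequence} (with the $T_{\tau}(\Sigma)$-trajectory $\sigma_{2}$ and the $T$-trajectory $\sigma_{1}$) yields $\sigma_{2}\models\psi$. As $\sigma_{2}$ was arbitrary, $\psi$ is satisfied by $q_{2}$, hence by $T_{\tau}(\Sigma)$, under control, which is the claim.

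The main obstacle, and the step I would track most carefully, is the bookkeeping of the direction of $\sim_{\varepsilon}$. Lemma~\ref{Lem:bisimilar state to bisimilar sequence} forces the finite system into the role of $T_{1}$ and outputs $\sigma_{1}$ (on $T$) $\sim_{\varepsilon}$ $\sigma_{2}$ (on $T_{\tau}(\Sigma)$), whereas Lemma~\ref{Lem:logical characterization of bisimilar sequence} is stated with the $T_{\tau}(\Sigma)$-trajectory written first. So I would record the simple but essential observation that an $A\varepsilon A$ bisimulation relation inverts to one between the systems taken in the opposite order: condition~(i) is symmetric in $\mathbf{d}$, and conditions~(ii) and~(iii) merely exchange roles, so $\sigma_{1}\sim_{\varepsilon}\sigma_{2}$ may be read with either system placed first. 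This is exactly what lets the two lemmas compose, and once it is in place the argument is a routine linkage rather than a new calculation.
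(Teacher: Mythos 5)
Your proof is correct and takes essentially the same route as the paper's: unfold Definition~\ref{Def:control satisfy}, produce a bisimilar pair of states via Theorem~\ref{Th:bisi}, transport the strategy with Lemma~\ref{Lem:bisimilar state to bisimilar sequence}, and transfer the formula with Lemma~\ref{Lem:logical characterization of bisimilar sequence}. Your two local refinements---taking $q_{2}=q_{1}$ via the second clause of Theorem~\ref{Th:bisi} instead of appealing to $T\simeq_{\varepsilon}T_{\tau}(\Sigma)$, and explicitly recording that an $A\varepsilon A$ bisimulation relation inverts so that $\sim_{\varepsilon}$ can be read in either order---are both sound, and the latter makes precise a symmetry step that the paper's proof uses tacitly.
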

\begin{proof}
Let $\tau,\eta,\mu \in\mathbb{R}_{+}$ such that $\|e^{\mathbf{A}\tau}\|\varepsilon +\mu+\eta/2 < \varepsilon$ and let $\psi$ be an LTL$_{-X}^{\delta}$ formula.
Suppose that $\psi$ is satisfied by $T$ under control. Then it follows from Definition~\ref{Def:control satisfy} that there exists a state $q_{2}$ of $T$ such that $tr_{\varepsilon}^{\delta}(\psi)$ is satisfied by $q_{2}$ under control.
Thus there exists a control strategy $f:Q^{+}\rightarrow 2^{A}-\{\emptyset\}$ such that
\begin{equation}\label{Eq:Lem logical connection 2}
\sigma_{2}\models tr_{\varepsilon}^{\delta}(\psi)\textrm{ for all }\sigma_{2}\in Out(q_{2},f).
\end{equation}
Moreover, it follows from Theorem~\ref{Th:bisi} that $q_{1}\sim_{\varepsilon}q_{2}$ for some state $q_{1}$ of $T_{\tau}(\Sigma)$.
Therefore, by Lemma~\ref{Lem:bisimilar state to bisimilar sequence}, there exists a control strategy $f':(Q_{\tau})^{+}\rightarrow 2^{A_{\tau}}-\{\emptyset\}$ such that for any $\sigma_{1}\in Out(q_{1},f')$,  $\sigma_{1}\sim_{\varepsilon}\sigma_{2}$ for some $\sigma_{2}\in Out(q_{2},f)$.
 Further, by Lemma~\ref{Lem:logical characterization of bisimilar sequence} and (\ref{Eq:Lem logical connection 2}), we get $\sigma_{1}\models \psi$ for any $\sigma_{1}\in Out(q_{1},f')$.
 Thus it follows from Definition~\ref{Def:control satisfy} that $\psi$ is satisfied by $q_{1}$ under control. Then $\psi$ is satisfied by $T_{\tau}(\Sigma)$ under control.   \qquad
\end{proof}

Immediately, we have the following result.

\begin{corollary}\label{Co:logical connection origin}
Given an asymptotically stable linear control system $\Sigma$ below
\begin{eqnarray*}
\Sigma : \dot{x}=\mathbf{A}x+\mathbf{B}u+\mathbf{G}v,\ \ \  x\in X,u\in U,v\in V
\end{eqnarray*}
and $\varepsilon,\delta\in\mathbb{R}_{+}$.
For any $\tau,\eta,\mu\in\mathbb{R}_{+}$ satisfying $\|e^{\mathbf{A}\tau}\|\varepsilon +\mu+\eta/2 < \varepsilon$
and for any $T\in T_{\tau, \eta,\mu}(\Sigma)$ and LTL$_{-X}$ formula $\varphi_{0}$, if $tr_{\varepsilon}^{\delta}(tr_{\delta}(\varphi_{0}))$ is satisfied by $T$ under control then $tr_{\delta}(\varphi_{0})$ is satisfied by $T_{\tau}(\Sigma)$ under control.
\end{corollary}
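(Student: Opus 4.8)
The plan is to obtain this Corollary as a direct instantiation of Theorem~\ref{Th:logical connection origin}. The key observation is purely syntactic: by Definition~\ref{Def:transformation}, the map $tr_{\delta}$ has codomain $\mathrm{LTL}_{-X}^{\delta}$, so for any $\mathrm{LTL}_{-X}$ formula $\varphi_{0}$, the formula $tr_{\delta}(\varphi_{0})$ is itself an $\mathrm{LTL}_{-X}^{\delta}$ formula. Hence $tr_{\delta}(\varphi_{0})$ is a legitimate choice for the universally quantified $\mathrm{LTL}_{-X}^{\delta}$ formula $\psi$ appearing in the statement of Theorem~\ref{Th:logical connection origin}.

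First I would fix $\varepsilon,\delta\in\mathbb{R}_{+}$, fix $\tau,\eta,\mu\in\mathbb{R}_{+}$ satisfying the sampling condition $\|e^{\mathbf{A}\tau}\|\varepsilon +\mu+\eta/2 < \varepsilon$, and fix $T\in T_{\tau,\eta,\mu}(\Sigma)$ together with an arbitrary $\mathrm{LTL}_{-X}$ formula $\varphi_{0}$. I would then set $\psi\triangleq tr_{\delta}(\varphi_{0})$, noting as above that $\psi$ is an $\mathrm{LTL}_{-X}^{\delta}$ formula so that all hypotheses of Theorem~\ref{Th:logical connection origin} are met for this choice of parameters and $\psi$.

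Next I would rewrite the Corollary's hypothesis using this substitution. The assumption ``$tr_{\varepsilon}^{\delta}(tr_{\delta}(\varphi_{0}))$ is satisfied by $T$ under control'' is literally ``$tr_{\varepsilon}^{\delta}(\psi)$ is satisfied by $T$ under control''. Theorem~\ref{Th:logical connection origin} then yields immediately that $\psi$, i.e.\ $tr_{\delta}(\varphi_{0})$, is satisfied by $T_{\tau}(\Sigma)$ under control, which is exactly the desired conclusion.

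I do not anticipate any genuine obstacle here, since all the analytic and game-theoretic content—namely the construction of the derived control strategy in Lemma~\ref{Lem:bisimilar state to bisimilar sequence} and the logical preservation along $A\varepsilon A$ bisimilar sequences in Lemma~\ref{Lem:logical characterization of bisimilar sequence}—has already been absorbed into Theorem~\ref{Th:logical connection origin}. The only point requiring care is confirming that the codomain typing of $tr_{\delta}$ (Definition~\ref{Def:transformation}) indeed places $tr_{\delta}(\varphi_{0})$ in $\mathrm{LTL}_{-X}^{\delta}$, so that the quantifier ``for any $\mathrm{LTL}_{-X}^{\delta}$ formula $\psi$'' in the theorem legitimately covers it; once this is noted the Corollary follows by specialization with no further work.
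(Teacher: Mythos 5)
Your proposal is correct and follows exactly the paper's own route: the paper proves the corollary by citing Definition~\ref{Def:transformation} (which guarantees $tr_{\delta}(\varphi_{0})$ is an LTL$_{-X}^{\delta}$ formula) together with Theorem~\ref{Th:logical connection origin}, which is precisely your instantiation $\psi \triangleq tr_{\delta}(\varphi_{0})$. Nothing further is needed.
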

\begin{proof}
Follows from Definition~\ref{Def:transformation} and Theorem~\ref{Th:logical connection origin}. \qquad
\end{proof}

In this section, the functions $tr_{\delta}$ and $tr_{\varepsilon}^{\delta}$ play central roles.
We use these functions to transform LTL$_{-X}$ formula $\varphi_{0}$ to LTL$_{-X}^{\delta}$ formula $tr_{\delta}(\varphi_{0})$ and LTL$_{-X}^{\delta+\varepsilon}$ formula $tr_{\varepsilon}^{\delta}(tr_{\delta}(\varphi_{0}))$, respectively.
Similar method has been adopted in~\cite{zhang} to offer a logical characterization of $\lambda-$bisimulation~\cite{Ying}.
\section{Controller of $\Sigma$ derived from control strategy of finite abstraction}\label{Sec:construct controller}
 This section will demonstrate that, under some assumptions, given an initial state $q$ and a control strategy $f$ of finite abstraction enforcing $tr_{\varepsilon}^{\delta}(tr_{\delta}(\varphi_{0}))$, there exists a controller of $\Sigma$ derived from $q$ and $f$ which enforces $\Sigma$ satisfying $\varphi_{0}$.

\begin{definition}\label{def:controller}
Given a linear control system $\Sigma$ below
\begin{eqnarray*}
\Sigma : \dot{x}=\mathbf{A}x+\mathbf{B}u+\mathbf{G}v,\ \ \  x\in X,u\in U,v\in V
\end{eqnarray*}
and $\tau \in \mathbb{R}_{+}$.
A $\tau$-controller of $\Sigma$ is a pair $C=(X_{0},f_{c})$, where $X_{0}\subseteq X$ denotes a set of initial states and $f_{c}$ is a partial function from $X^{+}$ to $A_{\tau}$\footnote{$A_{\tau}\triangleq\{\mathbf{u}\in {\cal U}: \ the\ domain\ of\ \mathbf{u}\ is\ [0,\tau]\}$, see Definition~\ref{Def:tau}.}. The function $f_{c}$ is said to be a $\tau$-controller function.
\end{definition}

\begin{definition}\label{def:controller construct}
Given a linear control system $\Sigma$ with state space $X$
and $\varepsilon,\tau,\eta,\mu\in\mathbb{R}_{+}$.
Let $T\in T_{\tau,\eta,\mu}(\Sigma)$ and $T=(Q,A, B, \rightarrow, \mathbb{R}^{n}, H)$. Suppose that  $q_{0}\in Q$ and $f:Q^{+}\rightarrow 2^{A}-\{\emptyset\}$ is a control strategy of $T$.
Then a $\tau$-controller $C=(X_{0},f_{c})$ of $\Sigma$ is said to be derived from $q_0$ and $f$ if and only if the following hold:

(1) $X_{0}=\{q\in X: \mathbf{d}(q,q_{0})\leq\varepsilon\}$,

(2) for any $s\in X^{+}$, if there exists $s_{1}\in Out^{+}(q_{0},f)$ such that $s\sim_{\varepsilon}s_{1}$ then
$f_{c}(s)$ is defined and $\|\mathbf{x}(\tau,0,f_{c}(s),0)-a\|\leq \mu/2$ for some $s'\in Out^{+}(q_{0},f)$ and $a\in f(s')$ with $s\sim_{\varepsilon}s'$, otherwise  $f_{c}(s)$ is undefined.
\end{definition}

The following result reveals that, for any initial state $q_0$ and control strategy $f$ of finite abstraction, there exists some controller $C=(X_{0},f_{c})$ of $\Sigma$ derived from $q_0$ and~$f$.

\begin{lemma}\label{lem:controller exist}
Given a linear control system $\Sigma$ with state space $X$
and $\varepsilon,\tau,\eta,\mu\in\mathbb{R}_{+}$.
Let $T\in T_{\tau,\eta,\mu}(\Sigma)$ and $T=(Q,A, B, \rightarrow, \mathbb{R}^{n}, H)$.
Then for any $q_{0}\in Q$ and control strategy $f:Q^{+}\rightarrow 2^{A}-\{\emptyset\}$,
there exists a $\tau$-controller $C=(X_{0},f_{c})$ of $\Sigma$ derived from $q_0$ and $f$.
\end{lemma}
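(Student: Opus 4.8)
The plan is to construct the controller explicitly and then check that it meets both clauses of Definition~\ref{def:controller construct}. Clause (1) leaves no freedom, so I simply set $X_{0}=\{q\in X:\mathbf{d}(q,q_{0})\leq\varepsilon\}$; all the real work lies in defining the partial function $f_{c}:X^{+}\rightarrow A_{\tau}$ so that clause (2) holds.

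First I would isolate the crucial selection fact: for every control label $a\in A$ of $T$ there exists an input $\mathbf{u}\in A_{\tau}$ with $\|\mathbf{x}(\tau,0,\mathbf{u},0)-a\|\leq\mu/2$. This is exactly where the definition of a finite abstraction enters. By clause (1) of Definition~\ref{Def:fin} we have $\mathbf{d}_{h}(A,\mathcal{R}_{A_{\tau}})\leq\mu/2$, hence in particular $\sup_{a\in A}\inf_{r\in\mathcal{R}_{A_{\tau}}}\mathbf{d}(a,r)\leq\mu/2$, so that $\inf_{r\in\mathcal{R}_{A_{\tau}}}\mathbf{d}(a,r)\leq\mu/2$ for each fixed $a$. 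Since $U\times V$ is compact (Convention), the reachable set $\mathcal{R}_{A_{\tau}}=\{\mathbf{x}(\tau,0,\mathbf{u},0):\mathbf{u}\in A_{\tau}\}$ is compact, so this infimum is attained at some $r^{*}\in\mathcal{R}_{A_{\tau}}$ with $\mathbf{d}(a,r^{*})\leq\mu/2$; by the definition of $\mathcal{R}_{A_{\tau}}$ there is then $\mathbf{u}\in A_{\tau}$ realizing $r^{*}=\mathbf{x}(\tau,0,\mathbf{u},0)$. Because $T$ is finite, $A$ is finite, so I may fix once and for all one such input $\mathbf{u}_{a}\in A_{\tau}$ for every $a\in A$.

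Next I would define $f_{c}$. Given $s\in X^{+}$, if there is no $s_{1}\in Out^{+}(q_{0},f)$ with $s\sim_{\varepsilon}s_{1}$, leave $f_{c}(s)$ undefined. Otherwise choose, by any fixed selection rule, some $s'\in Out^{+}(q_{0},f)$ with $s\sim_{\varepsilon}s'$; since $f$ is a control strategy, $f(s')\neq\emptyset$, so I pick $a\in f(s')$ and put $f_{c}(s)=\mathbf{u}_{a}$. By the selection fact, $\|\mathbf{x}(\tau,0,f_{c}(s),0)-a\|=\|\mathbf{x}(\tau,0,\mathbf{u}_{a},0)-a\|\leq\mu/2$, and here $s'\in Out^{+}(q_{0},f)$, $a\in f(s')$ and $s\sim_{\varepsilon}s'$ hold simultaneously, which is precisely what clause (2) requires in its ``defined'' case. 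Reading off the result, $C=(X_{0},f_{c})$ is derived from $q_{0}$ and $f$: clause (1) holds by construction, and clause (2) holds because the ``defined'' branch was arranged above while its negation is exactly the complementary situation in which no element of $Out^{+}(q_{0},f)$ is $A\varepsilon A$ bisimilar to $s$.

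The only nontrivial ingredient is the selection fact, and its sole subtlety is the attainment of the infimum in the Hausdorff distance, for which I rely on compactness of the reachable set $\mathcal{R}_{A_{\tau}}$ (a standard consequence of compactness of $U$ for linear systems); everything after that is bookkeeping around the definitions of $Out^{+}$, $\sim_{\varepsilon}$, and a derived controller.
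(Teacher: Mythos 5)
Your proof is correct and follows essentially the same route as the paper's: both isolate the key selection fact that every abstract label $a\in A$ admits a concrete input $\mathbf{u}_a\in A_{\tau}$ with $\|\mathbf{x}(\tau,0,\mathbf{u}_a,0)-a\|\leq\mu/2$ (via the Hausdorff-distance clause of Definition~\ref{Def:fin}), and then define $f_c$ precisely on those $s\in X^{+}$ that are $A\varepsilon A$ bisimilar to some element of $Out^{+}(q_0,f)$, leaving it undefined elsewhere. If anything, you are slightly more careful than the paper, which extracts the $\mu/2$-close input directly ``from the definitions'' of $\mathcal{R}_{A_{\tau}}$ and $\mathbf{d}_h$ and thus silently passes over the boundary case in which the infimum equals $\mu/2$ exactly; your appeal to compactness (closedness would suffice) of the reachable set is what justifies attainment there.
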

\begin{proof}
Let $q_0\in Q$ and let $f:Q^{+}\rightarrow 2^{A}-\{\emptyset\}$ be a control strategy of $T$.
We set
\begin{center}
$\triangle=\{s\in X^+: s\sim_{\varepsilon} s_1 \textrm{ for some } s_1\in Out^+(q_0,f)\}$.
\end{center}
So for each $s\in \triangle$, there exists $a\in A$ such that $a\in f(s_1)$ and $s\sim_{\varepsilon} s_1$ for some $s_1\in Out^+(q_0,f)$.
Moreover, for any $a\in A$, by $T\in T_{\tau,\eta,\mu}(\Sigma)$, Definition~\ref{Def:tau} and~\ref{Def:fin} and the definitions of $\cal{R}_{A_{\tau}}$ and $\mathbf{d}_h$, there exists $\mathbf{u}\in A_{\tau}$ such that $\|\mathbf{x}(\tau,0,\mathbf{u},0)-a\|\leq \mu/2$.
Thus for each $s\in\triangle$, there exists some control input $\mathbf{u}\in A_{\tau}$ such that $\|\mathbf{x}(\tau, 0,\mathbf{u},0)-a\|\leq \mu/2$ for some $s_1\in Out^+(q_0,f)$ and $a\in f(s_1)$ with $s\sim_{\varepsilon}s_1$.
Such control input may not be unique. For each $s\in\triangle$, we fix $\mathbf{u}_s\in A_{\tau}$, which is one of such control inputs.
Further, we define a partial function $f_c:X^{+}\rightarrow A_{\tau}$ as
\begin{equation*}
f_c(s)=\left\{\begin{aligned}& \mathbf{u}_s   &\textrm{  if } s\in \triangle\\
& \textrm{undefined}   &\textrm{otherwise.}
\end{aligned}\right.
\end{equation*}
It is easy to see that $(X_0, f_c)$ is a $\tau$-controller derived from $q_{0}$ and $f$, where $X_c\triangleq\{q\in X: \mathbf{d}(q,q_{0})\leq\varepsilon\}$.
\qquad
\end{proof}

To illustrate the execution of linear system $\Sigma$ with $\tau$-controller derived from $q_{0}$ and $f$, the following proposition is needed.

\begin{proposition}\label{pro:controller exist}
Given an asymptotically stable linear control system $\Sigma$ below
\begin{eqnarray*}
\Sigma : \dot{x}=\mathbf{A}x+\mathbf{B}u+\mathbf{G}v,\ \ \  x\in X,u\in U,v\in V.
\end{eqnarray*}
Let $\varepsilon,\tau,\eta,\mu\in\mathbb{R}_{+}$, $T\in T_{\tau,\eta,\mu}(\Sigma)$, $T=(Q,A, B, \rightarrow, \mathbb{R}^{n}, H)$, $q_{0}\in Q$, $f$ a control strategy of $T$ and let $C=(X_{0},f_{c})$ be a $\tau$-controller derived from $q_{0}$ and $f$.
Assume that $\|e^{\mathbf{A}\tau}\|\varepsilon +\mu+\eta/2 < \varepsilon$.
For any $s\in X^{+}$ and $q\in X$, if $f_{c}(s)$ is defined and $s[end]\xrightarrow{f_{c}(s),\mathbf{v}}_{\tau}q$ for some $\mathbf{v}\in B_{\tau}$ (see Definition~\ref{Def:tau}) then there exists $s_{1}\in Out^{+}(q_{0},f)$ such that $sq\sim_{\varepsilon}s_{1}$.
\end{proposition}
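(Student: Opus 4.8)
The plan is to produce the required $s_{1}$ by appending a single abstract state $q_{1}$ to the witnessing run $s'$ that the derived controller already supplies, and then to certify $sq\sim_{\varepsilon}s'q_{1}$ by a norm estimate engineered to collapse to the standing hypothesis $\|e^{\mathbf{A}\tau}\|\varepsilon+\mu+\eta/2<\varepsilon$. First I would unpack the assumption that $f_{c}(s)$ is defined: by Definition~\ref{def:controller construct}(2) there exist $s'\in Out^{+}(q_{0},f)$ and a control label $a\in f(s')$ with $s\sim_{\varepsilon}s'$ and $\|\mathbf{x}(\tau,0,f_{c}(s),0)-a\|\leq\mu/2$. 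Since $s\sim_{\varepsilon}s'$ we have $|s|=|s'|$, and condition (i) of the $A\varepsilon A$ bisimulation, together with the fact that the observation maps of $T_{\tau}(\Sigma)$ and $T$ are the identity and the inclusion respectively, yields $\|s[end]-s'[end]\|\leq\varepsilon$. The goal then reduces to exhibiting a state $q_{1}$ of $T$ and a label $b\in B$ with $s'[end]\xrightarrow{a,b}q_{1}$ (whence $s'q_{1}\in Out^{+}(q_{0},f)$ by Lemma~\ref{Lem:outcomes}, as $a\in f(s')$) and $q\sim_{\varepsilon}q_{1}$; taking $s_{1}:=s'q_{1}$ will then give $sq\sim_{\varepsilon}s_{1}$.

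Next I would select the disturbance label $b$ and the successor $q_{1}$. The transition $s[end]\xrightarrow{f_{c}(s),\mathbf{v}}_{\tau}q$ means $q=\mathbf{x}(\tau,s[end],f_{c}(s),\mathbf{v})$, so by linearity of $\Sigma$ we have $q=e^{\mathbf{A}\tau}s[end]+\mathbf{x}(\tau,0,f_{c}(s),0)+\mathbf{x}(\tau,0,0,\mathbf{v})$. The point $\mathbf{x}(\tau,0,0,\mathbf{v})$ lies in $\mathcal{R}_{B_{\tau}}$, so condition (2) of Definition~\ref{Def:fin}, namely $\mathbf{d}_{h}(B,\mathcal{R}_{B_{\tau}})\leq\mu/2$, together with the finiteness of $B$, produces some $b\in B$ with $\|\mathbf{x}(\tau,0,0,\mathbf{v})-b\|\leq\mu/2$. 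Because $T$ is non-blocking, for this $a\in A$ and $b\in B$ there is a state $q_{1}$ with $s'[end]\xrightarrow{a,b}q_{1}$, and Definition~\ref{Def:fin}(3) then gives $\|e^{\mathbf{A}\tau}s'[end]+a+b-q_{1}\|\leq\eta/2$.

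The heart of the argument is the triangle-inequality estimate for $\|q-q_{1}\|$. Decomposing $q-q_{1}$ as the sum $\big(e^{\mathbf{A}\tau}(s[end]-s'[end])+(\mathbf{x}(\tau,0,f_{c}(s),0)-a)+(\mathbf{x}(\tau,0,0,\mathbf{v})-b)\big)+\big(e^{\mathbf{A}\tau}s'[end]+a+b-q_{1}\big)$ and bounding the four contributions by $\|e^{\mathbf{A}\tau}\|\varepsilon$, $\mu/2$, $\mu/2$ and $\eta/2$ respectively, I obtain $\|q-q_{1}\|\leq\|e^{\mathbf{A}\tau}\|\varepsilon+\mu+\eta/2<\varepsilon$. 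Since $q_{1}$ is a state of $T$ and $q$ a state of $T_{\tau}(\Sigma)$ with $\mathbf{d}(q_{1},q)\leq\varepsilon$, Theorem~\ref{Th:bisi} gives $q\sim_{\varepsilon}q_{1}$. Combining $s\sim_{\varepsilon}s'$ with $q\sim_{\varepsilon}q_{1}$ yields $sq\sim_{\varepsilon}s'q_{1}=s_{1}$, which finishes the proof.

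The main obstacle I anticipate is not a single deep step but the careful bookkeeping that makes the final estimate close: one must route the state discrepancy through $\|e^{\mathbf{A}\tau}\|$, approximate both the concrete control input $f_{c}(s)$ and the concrete disturbance response $\mathbf{x}(\tau,0,0,\mathbf{v})$ by the abstract labels $a$ and $b$ (each costing $\mu/2$), and absorb the abstraction's rounding through $\eta/2$, so that the three contributions reassemble into exactly the left-hand side of the standing assumption. A secondary point demanding care is confirming that $s'q_{1}$ genuinely lies in $Out^{+}(q_{0},f)$, for which one invokes $a\in f(s')$ together with Lemma~\ref{Lem:outcomes} rather than reasoning directly from Definition~\ref{Def:strategy}.
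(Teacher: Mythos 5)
Your proposal is correct and follows essentially the same route as the paper's own proof: unpack Definition~\ref{def:controller construct} to get $s'\in Out^{+}(q_{0},f)$ and $a\in f(s')$ with $s\sim_{\varepsilon}s'$, pick $b\in B$ within $\mu/2$ of $\mathbf{x}(\tau,0,0,\mathbf{v})$ via Definition~\ref{Def:fin}(2), use non-blocking to obtain the successor $q_{1}$, and close the estimate $\|q-q_{1}\|\leq\|e^{\mathbf{A}\tau}\|\varepsilon+\mu+\eta/2<\varepsilon$ so that Theorem~\ref{Th:bisi} yields $q\sim_{\varepsilon}q_{1}$. The only differences are presentational — you bound $\|q-q_{1}\|$ in a single four-term decomposition where the paper chains two triangle inequalities, and you are more explicit than the paper about the role of Lemma~\ref{Lem:outcomes}, the finiteness of $B$, and the non-blocking property — none of which changes the substance.
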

\begin{proof}
Let $s\in X^{+}$ and $q\in X$.
Suppose that $f_{c}(s)$ is defined and $s[end]\xrightarrow{f_{c}(s),\mathbf{v}}_{\tau}q$ for some $\mathbf{v}\in B_{\tau}$.
Then by Definition~\ref{def:controller construct}, there exists $s_{1}\in Out^{+}(q_{0},f)$ and $a\in f(s_1)$ such that $s\sim_{\varepsilon}s_{1}$ and
$\|\mathbf{x}(\tau,0,f_{c}(s),0)-a\|\leq \mu/2$.
So to complete the proof, it is enough to show that $s_{1}[end]\xrightarrow{a,b}q'$ and  $q\sim_{\varepsilon}q'$ for some $q'\in Q$ and $b\in B$.
By $s[end]\xrightarrow{f_{c}(s),\mathbf{v}}_{\tau}q$ and Definition~\ref{Def:tau}, we obtain
$q = \mathbf{x}(\tau,s[end],f_{c}(s),\mathbf{v}) = \mathbf{x}(\tau,s[end],0,0)+\mathbf{x}(\tau,0,f_{c}(s),0)+\mathbf{x}(\tau,0,0,\mathbf{v})$.
By Definition~\ref{Def:fin}, there exists $b\in B$ such that $\|\mathbf{x}(\tau,0,0,\mathbf{v})-b\|\leq \mu/2$.
Thus it follows that $s_{1}[end]\xrightarrow{a,b}q'$ for some state $q'\in Q$ of $T$.
Next, we show that $\mathbf{d}(q,q')\leq\varepsilon$.
By $s_{1}[end]\xrightarrow{a,b}q'$ and Definition~\ref{Def:fin}, we have $\|\mathbf{x}(\tau,s_{1}[end],0,0)+a+b-q'\|\leq\eta/2$.
It follows that
\begin{flalign*}
&\ \|\mathbf{x}(\tau,s[end],0,0)+a+b-q'\|\\
&\leq\|\mathbf{x}(\tau,s[end],0,0)+a+b-\mathbf{x}(\tau,s_{1}[end],0,0)+\mathbf{x}(\tau,s_{1}[end],0,0)-q'\|\\
&\leq\|\mathbf{x}(\tau,s[end],0,0)-\mathbf{x}(\tau,s_{1}[end],0,0)\|+\|\mathbf{x}(\tau,s_{1}[end],0,0)+a+b-q'\|\\
&\leq\|e^{\mathbf{A}\tau}\|\cdot\|s[end]-s_{1}[end]\|+\eta/2\\
&\leq\|e^{\mathbf{A}\tau}\|\cdot\varepsilon+\eta/2.
\end{flalign*}
Thus we get
\begin{align*}
\|q-q'\|&=\|q-\mathbf{x}(\tau,s[end],0,0)-a-b+\mathbf{x}(\tau,s[end],0,0)+a+b-q'\|\\
&\leq\|q-\mathbf{x}(\tau,s[end],0,0)-a-b\|+\|\mathbf{x}(\tau,s[end],0,0)+a+b-q'\|\\
&\leq\|q-\mathbf{x}(\tau,s[end],0,0)-a-b\|+\|e^{\mathbf{A}\tau}\|\cdot\varepsilon+\eta/2\\
&=\|\mathbf{x}(\tau,s[end],0,0)+\mathbf{x}(\tau,0,f_{c}(s),0)+\mathbf{x}(\tau,0,0,\mathbf{v})-\mathbf{x}(\tau,s[end]
,0,0)
\\
&\textrm{\ \ \ }-a-b\|
 +\|e^{\mathbf{A}\tau}\|\cdot\varepsilon+\eta/2\\
&\leq\|\mathbf{x}(\tau,0,f_{c}(s),0)-a\|+\|\mathbf{x}(\tau,0,0,\mathbf{v})-b\|+\|e^{\mathbf{A}\tau}\|\cdot\varepsilon+\eta/2\\
&\leq\mu+\|e^{\mathbf{A}\tau}\|\cdot\varepsilon+\eta/2
\\
&\leq\varepsilon.
\end{align*}
So by Theorem~\ref{Th:bisi} and $\|e^{\mathbf{A}\tau}\|\varepsilon +\mu+\eta/2 < \varepsilon$, we obtain $q\sim_{\varepsilon}q'$ and then $sq\sim_{\varepsilon}s_{1}q'$.
\qquad
\end{proof}

Given an initial state $q_{0}$ and a control strategy $f$ of finite abstraction $T$, the execution of system $\Sigma$ with a controller $(X_{0},f_{c})$ derived from $q_{0}$ and $f$ is described below.
We start this execution from some state $\mathbf{x}(0)\in X_{0}$ (i.e., $\mathbf{d}(\mathbf{x}(0),q_{0})\leq\varepsilon$).
Then controller function $f_{c}$ provides a control input $f_{c}(\mathbf{x}(0))$, which is applied to $\Sigma$ on the time interval $[0,\tau)$.
At time $\tau$, the system $\Sigma$ reaches at a state $\mathbf{x}(\tau)$ from $\mathbf{x}(0)$ with control input $f_{c}(\mathbf{x}(0))$ and some disturbance input.
By Proposition~\ref{pro:controller exist}, there exists a state $q_{1}$ of $T_{\tau, \eta,\mu}(\Sigma)$ such that $q_{0}q_{1}\in Out^{+}(q_{0},f)$ and $q_{0}q_{1}\sim_{\varepsilon}\mathbf{x}(0)\mathbf{x}(\tau)$.
Then controller function $f_{c}$ offers a control input $f_{c}(\mathbf{x}(0)\mathbf{x}(\tau))$, which is applied on the time interval $[\tau,2\tau)$. The process repeats in such manner.
Here we just informally describe the execution of $\Sigma$ with a controller $(X_{0},f_{c})$.
Clearly, whether such execution exists indeed depends on whether $f_c$ is defined at points in the form of $\mathbf{x}(0)\mathbf{x}(\tau)\mathbf{x}(2\tau)\cdots\mathbf{x}(n\tau)$.
This issue will be considered in Proposition~\ref{pro:exist of traj}.

The above execution produces trajectories of $\Sigma$ with controller derived from $q_{0}$ and $f$, which are formally defined below.

\begin{definition}\label{Def:trajectory of close-loop}
Given an asymptotically stable linear control system $\Sigma$ below
\begin{eqnarray*}
\Sigma : \dot{x}=\mathbf{A}x+\mathbf{B}u+\mathbf{G}v,\ \ \  x\in X,u\in U,v\in V.
\end{eqnarray*}
Let $\varepsilon,\tau,\eta,\mu\in\mathbb{R}_{+}$ and let $T\in T_{\tau,\eta,\mu}(\Sigma)$ be a finite abstraction of $\Sigma$. Suppose that $T=(Q,A, B, \rightarrow, \mathbb{R}^{n}, H)$, $q_{0}\in Q$, $f$ is a control strategy of $T$ and $C=(X_{0},f_{c})$ is a $\tau$-controller derived from $q_{0}$ and $f$.
Then $\mathbf{x}:\mathbb{R}_{+}^{0}\rightarrow X$ is said to be a trajectory of $\Sigma$ with $\tau$-controller $C$ if and only if for any $n\in\mathbb{N}$, $f_c(\sigma_{\mathbf{x}}[1,n])$ is defined and there exists $\mathbf{v}_{n}\in B_{\tau}$ (see Definition~\ref{Def:tau}) such that
$\dot{\mathbf{x}}(t)=\mathbf{A}\mathbf{x}(t)+\mathbf{B}f_{c}(\sigma_{\mathbf{x}}[1,n])(t) +\mathbf{G}\mathbf{v}_{n}(t)$
for any $t\in \mathbb{R}_{+}^{0}$ with $(n-1)\tau\leq t< n\tau$, where $\sigma_{\mathbf{x}}\triangleq\mathbf{x}(0)\mathbf{x}(\tau)\cdots$.
\end{definition}

Due to the following result, given a controller derived from $q_{0}$ and $f$, the trajectory of $\Sigma$ with this controller indeed exists.

\begin{proposition}\label{pro:exist of traj}
Given an asymptotically stable linear control system $\Sigma$ below
\begin{eqnarray*}
\Sigma : \dot{x}=\mathbf{A}x+\mathbf{B}u+\mathbf{G}v,\ \ \  x\in X,u\in U,v\in V.
\end{eqnarray*}
Let $\varepsilon,\tau,\eta,\mu\in\mathbb{R}_{+}$ such that $\|e^{\mathbf{A}\tau}\|\varepsilon +\mu+\eta/2 < \varepsilon$ and let $T\in T_{\tau,\eta,\mu}(\Sigma)$ be a finite abstraction of $\Sigma$.
Suppose that $T=(Q,A, B, \rightarrow, \mathbb{R}^{n}, H)$, $q_{0}\in Q$, $f$ is a control strategy of $T$ and $C=(X_{0},f_{c})$ is a $\tau$-controller derived from $q_{0}$ and $f$.
Then we have

(1) there exists at least one trajectory $\mathbf{x}:\mathbb{R}_{+}^{0}\rightarrow X$ of $\Sigma$  with $\tau$-controller $C$, and

(2) for any such trajectory  $\mathbf{x}:\mathbb{R}_{+}^{0}\rightarrow X$, there exists $\sigma\in Out(q_0,f)$ such that $\sigma\sim_{\varepsilon}\sigma_{\mathbf{x}}$ with $\sigma_{\mathbf{x}}=\mathbf{x}(0)\mathbf{x}(\tau)\cdots$.
\end{proposition}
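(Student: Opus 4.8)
The plan is to establish both parts by a single inductive construction along the sampling instants $(n-1)\tau$, carrying the invariant that every finite prefix $\sigma_{\mathbf{x}}[1,n]$ is $\varepsilon$-bisimilar to some outcome prefix in $Out^{+}(q_{0},f)$. This invariant is precisely what forces $f_{c}$ to be defined where the construction needs it (by Definition~\ref{def:controller construct}), and Proposition~\ref{pro:controller exist} is the device that pushes the invariant one sampling step forward.

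For part (1), first choose any $\mathbf{x}(0)\in X_{0}$. Since $X_{0}=\{q\in X:\mathbf{d}(q,q_{0})\leq\varepsilon\}$ and $\|e^{\mathbf{A}\tau}\|\varepsilon+\mu+\eta/2<\varepsilon$, Theorem~\ref{Th:bisi} gives $\mathbf{x}(0)\sim_{\varepsilon}q_{0}$, so $\sigma_{\mathbf{x}}[1,1]\sim_{\varepsilon}q_{0}\in Out^{+}(q_{0},f)$ and $f_{c}(\sigma_{\mathbf{x}}[1,1])$ is defined. Assume inductively that $\mathbf{x}$ has been built on $[0,(n-1)\tau)$, that $\mathbf{x}((n-1)\tau)$ is thereby determined, and that $\sigma_{\mathbf{x}}[1,n]\sim_{\varepsilon}s$ for some $s\in Out^{+}(q_{0},f)$, so that $f_{c}(\sigma_{\mathbf{x}}[1,n])$ is defined. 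Using forward completeness of $\Sigma$, pick any $\mathbf{v}_{n}\in B_{\tau}$ and extend $\mathbf{x}$ over $[(n-1)\tau,n\tau)$ by $\dot{\mathbf{x}}=\mathbf{A}\mathbf{x}+\mathbf{B}f_{c}(\sigma_{\mathbf{x}}[1,n])+\mathbf{G}\mathbf{v}_{n}$; after a time shift to the interval $[0,\tau]$ this yields $\mathbf{x}((n-1)\tau)\xrightarrow{f_{c}(\sigma_{\mathbf{x}}[1,n]),\mathbf{v}_{n}}_{\tau}\mathbf{x}(n\tau)$. Proposition~\ref{pro:controller exist} then produces $s'\in Out^{+}(q_{0},f)$ with $\sigma_{\mathbf{x}}[1,n+1]=\sigma_{\mathbf{x}}[1,n]\,\mathbf{x}(n\tau)\sim_{\varepsilon}s'$, which closes the induction. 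Concatenating these segments gives a trajectory $\mathbf{x}:\mathbb{R}_{+}^{0}\to X$ satisfying Definition~\ref{Def:trajectory of close-loop}.

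For part (2), let $\mathbf{x}$ be any trajectory of $\Sigma$ with controller $C$. Here the invariant is no longer a choice but is handed to us by the defining property of $\mathbf{x}$: for every $n$, $f_{c}(\sigma_{\mathbf{x}}[1,n])$ is defined and some $\mathbf{v}_{n}\in B_{\tau}$ realizes the sampling transition, so iterating Proposition~\ref{pro:controller exist} (with the base $\sigma_{\mathbf{x}}[1,1]\sim_{\varepsilon}q_{0}$ supplied by Definition~\ref{def:controller construct}) yields, for each $n$, a prefix $s^{n}\in Out^{n}(q_{0},f)$ of length exactly $n$ with $s^{n}\sim_{\varepsilon}\sigma_{\mathbf{x}}[1,n]$. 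These approximants need not be nested, so the remaining task is to glue them into a single infinite outcome. The set of finite outcomes in $Out^{+}(q_{0},f)$ that are $\varepsilon$-bisimilar to the corresponding prefix of $\sigma_{\mathbf{x}}$ is prefix-closed and, because $Q$ is finite, finitely branching; it is infinite since it contains each $s^{n}$. A K\"{o}nig's-lemma argument identical to Claim~3 in the proof of Lemma~\ref{Lem:bisimilar state to bisimilar sequence} then extracts an infinite branch $\sigma\in Out(q_{0},f)$ with $\sigma\sim_{\varepsilon}\sigma_{\mathbf{x}}$.

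The main obstacle is twofold. In part (1) it is guaranteeing the construction never stalls: one must check that $\varepsilon$-bisimilarity of the prefix is preserved at every sampling step so that $f_{c}$ remains defined, and this is exactly where the hypothesis $\|e^{\mathbf{A}\tau}\|\varepsilon+\mu+\eta/2<\varepsilon$ is consumed, through Proposition~\ref{pro:controller exist} (and, at $n=1$, through Theorem~\ref{Th:bisi}). In part (2) it is the passage from the family of finite $\varepsilon$-approximants $s^{n}$ to a single infinite outcome $\sigma$; since the $s^{n}$ may disagree, no direct limit is available and one must exploit the finiteness of $Q$, precisely as in Lemma~\ref{Lem:bisimilar state to bisimilar sequence}.
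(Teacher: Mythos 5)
Your proposal is correct and follows essentially the same route as the paper's proof: part (1) is an induction along the sampling instants maintaining the invariant that each prefix $\sigma_{\mathbf{x}}[1,n]$ is $\varepsilon$-bisimilar to some element of $Out^{n}(q_{0},f)$ (advanced by Proposition~\ref{pro:controller exist}, with forward completeness supplying each segment), and part (2) extracts finite approximants and glues them by a K\"{o}nig-type argument exploiting finiteness of $Q$, exactly as in the paper. The only cosmetic differences are that the paper starts the construction at $\mathbf{x}(0)=q_{0}$ itself rather than at an arbitrary point of $X_{0}$, and in part (2) it reads the approximants $s_{n}\sim_{\varepsilon}\sigma_{\mathbf{x}}[1,n]$ directly off Definition~\ref{def:controller construct} (definedness of $f_{c}$ forces their existence) instead of iterating Proposition~\ref{pro:controller exist}.
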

\begin{proof}
(1) We demonstrate the claim below first.

\textbf{Claim.}
There exist a family of trajectories $\mathbf{x}_{n}:[0,\tau]\rightarrow X$ ($n\in \mathbb{N}$) such that for any $n\in \mathbb{N}$, $\mathbf{x}_{n-1}(\tau)=\mathbf{x}_{n}(0)$ if $n>1$, $f_c(s_n)$ is defined and for some disturbance input $\mathbf{v}_n \in B_{\tau}$, $\dot{\mathbf{x}}_n(t)=\mathbf{A}\mathbf{x}_n(t)+\mathbf{B}f_{c}(s_n)(t) +\mathbf{G}\mathbf{v}_{n}(t)$ for all $t\in [0,\tau]$, where  $s_n\triangleq \mathbf{x}_1(0)\mathbf{x}_2(0)\cdots \mathbf{x}_n(0)$.

We construct such trajectories by induction on $n$.
Let $n=1$.
Since $q_0$ is a state of finite abstraction $T$, by Definition~\ref{Def:fin}, we have $q_0\in X$.
It is clear that $q_0\sim_{\varepsilon} q_0$ and $q_0\in Out^{+}(q_0,f)$.
Thus by Definition~\ref{def:controller construct}, $f_c(q_0)$ is defined and $f_c(q_0)\in A_{\tau}$.
Further, since $\Sigma$ is forward-complete, given an arbitrary disturbance input $\mathbf{v}_1\in B_{\tau}$, there exists a trajectory $\mathbf{x}_1:[0,\tau]\rightarrow X$ such that $\mathbf{x}_1(0)=q_0$ and $\dot{\mathbf{x}_1}(t)=\mathbf{A}\mathbf{x}_1(t)+\mathbf{B}f_{c}(q_0)(t) +\mathbf{G}\mathbf{v}_1(t)$ for all $t\in [0,\tau]$.
Clearly, $\mathbf{x}_1$ is the desired one.

Suppose that $n=i+1$ and we already have trajectories $\mathbf{x}_{1},\mathbf{x}_{2},\cdots \mathbf{x}_{i}$  such that for any $k\leq i$, $\mathbf{x}_{k-1}(\tau)=\mathbf{x}_{k}(0)$ if $k>1$, $f_c(s_k)$ is defined and for some disturbance input $\mathbf{v}_k \in B_{\tau}$, $\dot{\mathbf{x}}_k(t)=\mathbf{A}\mathbf{x}_k(t)+\mathbf{B}f_{c}(s_k)(t) +\mathbf{G}\mathbf{v}_{k}(t)$ for all $t\in [0,\tau]$, where  $s_k\triangleq \mathbf{x}_1(0)\mathbf{x}_2(0)\cdots \mathbf{x}_k(0)$.
Thus by Definition~\ref{Def:tau}, we get $\mathbf{x}_i(0) \xrightarrow{f_c(s_i),\mathbf{v}_i}_{\tau} \mathbf{x}_i(\tau)$.
Further, by Proposition~\ref{pro:controller exist}, there exists $s\in Out^{+}(q_0,f)$ such that $s_i\circ\mathbf{x}_i(\tau)\sim_{\varepsilon} s$.
Thus by Definition~\ref{def:controller construct}, $f_c(s_i\circ\mathbf{x}_i(\tau))$ is defined.
Then similar to the above, there exists a trajectory $\mathbf{x}_{i+1}:[0,\tau]\rightarrow X$ such that
$\mathbf{x}_{i+1}(0)=\mathbf{x}_{i}(\tau)$ and for some $\mathbf{v}_{i+1}\in B_{\tau}$, $\dot{\mathbf{x}}_{i+1}(t)=\mathbf{A}\mathbf{x}_{i+1}(t)+\mathbf{B}f_{c}(s_{i+1})(t) +\mathbf{G}\mathbf{v}_{i+1}(t)$ for all $t\in [0,\tau]$.

Now, we return to the proof of this proposition.
By the above claim, there exist a family of  trajectories $\mathbf{x}_{i}:[0,\tau]\rightarrow X$ ($i\in \mathbb{N}$)  satisfying the conditions in the above claim.
Then based on these trajectories, a function $\mathbf{x}:\mathbb{R}_{+}^{0}\rightarrow X$ is defined as: for any $t\in\mathbb{R}_{+}^{0} $, if $t\in [(i-1)\tau, i\tau)$ for some $i\in\mathbb{N}$ then we set $\mathbf{x}(t)=\mathbf{x}_{i}(t)$.
Clearly, $\bigcup\{[(i-1)\tau, i\tau):i\in \mathbb{N}\}=\mathbb{R}_{+}^{0}$ and $[(i-1)\tau, i\tau)\cap[(j-1)\tau, j\tau)=\emptyset$ for all $i,j\in\mathbb{N}$ with $i\neq j$.
Thus for any $t\in\mathbb{R}_{+}^{0} $, there exists unique $i\in\mathbb{N}$ such that $t\in [(i-1)\tau, i\tau)$.
 So the function $\mathbf{x}$ is well-defined.
By the above claim and Definition~\ref{Def:trajectory of close-loop}, $\mathbf{x}$ is a trajectory of $\Sigma$ with $\tau$-controller $C$.

(2) Let $\mathbf{x}$ be a trajectory of $\Sigma$ with $\tau$-controller $C$ and $\sigma_{\mathbf{x}}=\mathbf{x}(0)\mathbf{x}(\tau)\cdots$.
Then by Definition~\ref{Def:trajectory of close-loop}, $f_{c}(\sigma_{\mathbf{x}}[1,n])$ is defined for any $n\in\mathbb{N}$.
Thus it follows from Definition~\ref{def:controller construct}, there exist a family of sequences $s_{n}\in Out^{n}(q_{0},f)$ ($n\in\mathbb{N}$) such that $s_{n}\sim_{\varepsilon}\sigma_{\mathbf{x}}[1,n]$ for each $n\in\mathbb{N}$.
Moreover, since $T$ is finite, the state set $Q$ is finite.
Then it is easy to check that there exists an infinite sequence $i_1 i_2\cdots \in \mathbb{N}^{\omega}$ such that for any $j\in \mathbb{N}$, $i_j<i_{j+1}$ and $s_{i_{j}}$ is a proper prefix of $s_{i_{j+1}}$.
Clearly, for any $k\in \mathbb{N}$, there exists $j\in\mathbb{N}$ such that $k<i_j$.
Furthermore, for any $j,l,k\in\mathbb{N}$, if $k<i_j$ and $k<i_l$, then $s_{i_j}[k]=s_{i_l}[k]$.
Then we define an infinite sequence $\sigma\in Q^{\omega}$ as: for any $k\in \mathbb{N}$, if $k<i_j$ for some $j\in\mathbb{N}$, then we set $\sigma[k]=s_{i_j}[k]$.
It is clear that $\sigma$ is well-defined.
Then, since $s_{i_j}\in Out^{+}(q_{0},f)$ and $s_{i_j}\sim_{\varepsilon}\sigma_{\mathbf{x}}[1,i_j]$ for all $j\in\mathbb{N}$, by Definition~\ref{Def:strategy}, we have $\sigma\in Out(q_0,f)$ and $\sigma\sim_{\varepsilon}\sigma_{\mathbf{x}}$.
\end{proof}

The following result demonstrates that under some assumptions, given an LTL$_{-X}$ formula $\varphi_{0}$ as specification, if  $\sigma\models tr_{\varepsilon}^{\delta}(tr_{\delta}(\varphi_{0}))$ for any $\sigma\in Out(q_{0},f)$, then all trajectories of $\Sigma$ with a controller derived from $q_{0}$ and $f$ satisfy specification~$\varphi_{0}$.

\begin{theorem}\label{Th:close-loop satisfy 1}
Given an asymptotically stable linear control system $\Sigma$ below
\begin{eqnarray*}
\Sigma : \dot{x}=\mathbf{A}x+\mathbf{B}u+\mathbf{G}v,\ \ \  x\in X,u\in U,v\in V.
\end{eqnarray*}
Let $\varepsilon,\tau,\eta,\mu\in\mathbb{R}_{+}$, $\varphi_{0}$ an LTL$_{-X}$ formula,  $T\in T_{\tau,\eta,\mu}(\Sigma)$ a finite abstraction of $\Sigma$, $q_{0}$ a state of $T$, $f$ a control strategy of $T$ and let $C=(X_{0},f_{c})$ be a $\tau$-controller derived from $q_{0}$ and~$f$.
Assume that $\|e^{\mathbf{A}\tau}\|\varepsilon +\mu+\eta/2 < \varepsilon$ and $\|\mathbf{x}(t)-\mathbf{x}((n-1)\tau)\|\leq \delta$ for any trajectory $\mathbf{x}$ of $\Sigma$ and for any $n\in\mathbb{N}$ and $t\in {[(n-1)\tau,} n\tau)$.
If   $\sigma\models tr_{\varepsilon}^{\delta}(tr_{\delta}(\varphi_{0}))$ for any $\sigma\in Out(q_{0},f)$,  then all trajectories of $\Sigma$ with $\tau$-controller $C$ satisfy $\varphi_{0}$.
\end{theorem}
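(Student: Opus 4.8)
The plan is to prove the statement by chaining together the three main technical results already established: Proposition~\ref{pro:exist of traj}, Lemma~\ref{Lem:logical characterization of bisimilar sequence}, and Theorem~\ref{Th:logical connection 1'}. Fix an arbitrary trajectory $\mathbf{x}:\mathbb{R}_{+}^{0}\rightarrow X$ of $\Sigma$ with $\tau$-controller $C$, and let $\sigma_{\mathbf{x}}=\mathbf{x}(0)\mathbf{x}(\tau)\mathbf{x}(2\tau)\cdots$ denote its $\tau$-sampling. Since $\mathbf{x}$ is a trajectory of $\Sigma$, $\sigma_{\mathbf{x}}$ is a trajectory of the sampling system $T_{\tau}(\Sigma)$ (as noted after Definition~\ref{Def:tau}). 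The goal is to establish $\mathbf{x}\models\varphi_{0}$.

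First I would apply part (2) of Proposition~\ref{pro:exist of traj}. Since $\|e^{\mathbf{A}\tau}\|\varepsilon +\mu+\eta/2 < \varepsilon$ holds by assumption, this proposition yields a sequence $\sigma\in Out(q_{0},f)$ with $\sigma\sim_{\varepsilon}\sigma_{\mathbf{x}}$, where $\sigma$ is a trajectory of the finite abstraction $T$. Because the approximate bisimilarity relation $\sim_{\varepsilon}$ is symmetric (conditions (ii) and (iii) in Definition~\ref{Def:bis} are mutual duals, so the inverse of an $A\varepsilon A$ bisimulation is again one, and the sequence-level relation inherits this symmetry), we also have $\sigma_{\mathbf{x}}\sim_{\varepsilon}\sigma$. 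By the hypothesis of the theorem, $\sigma\models tr_{\varepsilon}^{\delta}(tr_{\delta}(\varphi_{0}))$.

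Next I would invoke Lemma~\ref{Lem:logical characterization of bisimilar sequence} with $\psi:=tr_{\delta}(\varphi_{0})$, which is an LTL$_{-X}^{\delta}$ formula by Definition~\ref{Def:transformation}. Taking $\sigma_{1}=\sigma_{\mathbf{x}}$ (a trajectory of $T_{\tau}(\Sigma)$) and $\sigma_{2}=\sigma$ (a trajectory of $T$), the facts $\sigma_{\mathbf{x}}\sim_{\varepsilon}\sigma$ and $\sigma\models tr_{\varepsilon}^{\delta}(\psi)$ give $\sigma_{\mathbf{x}}\models\psi$, i.e. $\sigma_{\mathbf{x}}\models tr_{\delta}(\varphi_{0})$. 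Finally, since the standing assumption $\|\mathbf{x}(t)-\mathbf{x}((n-1)\tau)\|\leq\delta$ holds for every $n\in\mathbb{N}$ and $t\in[(n-1)\tau,n\tau)$, Theorem~\ref{Th:logical connection 1'} applies to $\mathbf{x}$ and $\sigma_{\mathbf{x}}$ and converts $\sigma_{\mathbf{x}}\models tr_{\delta}(\varphi_{0})$ into $\mathbf{x}\models\varphi_{0}$, as desired.

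The argument itself is a short composition, so no single step involves heavy computation; the substantive content resides in the three cited results, and this theorem merely assembles them. The work therefore lies in verifying that every object has the correct type so each result is applicable: that $\sigma_{\mathbf{x}}$ really is a trajectory of $T_{\tau}(\Sigma)$, that $\sigma$ really is a trajectory of $T$, that $tr_{\delta}(\varphi_{0})\in\mathrm{LTL}_{-X}^{\delta}$, and that both standing assumptions on $\varepsilon,\tau,\eta,\mu,\delta$ feed correctly into Proposition~\ref{pro:exist of traj} and Theorem~\ref{Th:logical connection 1'}. The only genuinely delicate point is the orientation of $\sim_{\varepsilon}$: Proposition~\ref{pro:exist of traj} produces $\sigma\sim_{\varepsilon}\sigma_{\mathbf{x}}$, whereas Lemma~\ref{Lem:logical characterization of bisimilar sequence} is stated with the $T_{\tau}(\Sigma)$-trajectory on the left, so one must first record the symmetry of the relation before chaining.
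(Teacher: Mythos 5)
Your proof is correct and follows essentially the same route as the paper's own: part (2) of Proposition~\ref{pro:exist of traj}, then Lemma~\ref{Lem:logical characterization of bisimilar sequence} with $\psi=tr_{\delta}(\varphi_{0})$, then Theorem~\ref{Th:logical connection 1'}. In fact you are slightly more careful than the paper, which silently passes from $\sigma\sim_{\varepsilon}\sigma_{\mathbf{x}}$ to the orientation required by the lemma, whereas you explicitly justify this via the symmetry of the $A\varepsilon A$ bisimulation relation.
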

\begin{proof}
Suppose that $\sigma\models tr_{\varepsilon}^{\delta}(tr_{\delta}(\varphi_{0}))$ for any $\sigma\in Out(q_{0},f)$.
Let $\mathbf{x}:\mathbb{R}_{+}^{0}\rightarrow X$ be a trajectory of $\Sigma$ with $\tau$-controller $C$ and $\sigma_{\mathbf{x}}=\mathbf{x}(0)\mathbf{x}(\tau)\cdots$.
Then by (2) in Proposition~\ref{pro:exist of traj}, there exists $\sigma\in Qut(q_{0},f)$ such that $\sigma\sim_{\varepsilon}\sigma_{\mathbf{x}}$.
Thus by $\sigma\models tr^{\delta}_{\varepsilon}(tr_{\delta}(\varphi_{0}))$ and Lemma~\ref{Lem:logical characterization of bisimilar sequence}, we get $\sigma_{\mathbf{x}}\models tr_{\delta}(\varphi_{0})$.
Therefore, since $\|\mathbf{x}(t)-\mathbf{x}((n-1)\tau)\|\leq \delta$ for any $n\in\mathbb{N}$ and $t\in {[(n-1)\tau,} n\tau)$, it follows from Theorem~\ref{Th:logical connection 1'} that $\mathbf{x}\models \varphi_{0}$.
\end{proof}

Now we arrive at the main result of this section.

\begin{theorem}\label{co:main}
Given an asymptotically stable linear control system $\Sigma$ below
\begin{eqnarray*}
\Sigma : \dot{x}=\mathbf{A}x+\mathbf{B}u+\mathbf{G}v,\ \ \  x\in X,u\in U,v\in V.
\end{eqnarray*}
Let $\varepsilon,\tau,\eta,\mu\in\mathbb{R}_{+}$, $\varphi_{0}$ an LTL$_{-X}$ formula and let $T\in T_{\tau,\eta,\mu}(\Sigma)$ be a finite abstraction of $\Sigma$.
Assume that $\|e^{\mathbf{A}\tau}\|\varepsilon +\mu+\eta/2 < \varepsilon$ and $\|\mathbf{x}(t)-\mathbf{x}((n-1)\tau)\|\leq \delta$ for any trajectory $\mathbf{x}$ of $\Sigma$ and for any $n\in\mathbb{N}$ and $t\in {[(n-1)\tau,} n\tau)$.
If there exists a state
$q_{0}$ and a control strategy $f$ of $T$ such that  $\sigma\models tr_{\varepsilon}^{\delta}(tr_{\delta}(\varphi_{0}))$ for any $\sigma\in Out(q_{0},f)$, then there exists some $\tau$-controller $C=(X_{0},f_{c})$ derived from $q_{0}$ and~$f$ satisfying the following conditions:

(1) there exists at least one trajectory of $\Sigma$ with $\tau$-controller $C$, and

(2) all trajectories of $\Sigma$ with $\tau$-controller $C$ satisfy $\varphi_{0}$.
\end{theorem}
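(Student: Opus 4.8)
The plan is to assemble this statement from the three results immediately preceding it—Lemma~\ref{lem:controller exist}, Proposition~\ref{pro:exist of traj}, and Theorem~\ref{Th:close-loop satisfy 1}—since all the substantive work has already been packaged into them. First I would invoke Lemma~\ref{lem:controller exist} with the given state $q_{0}$ and control strategy $f$ of $T$ to produce a concrete $\tau$-controller $C=(X_{0},f_{c})$ derived from $q_{0}$ and $f$. That lemma asks only for $\varepsilon,\tau,\eta,\mu\in\mathbb{R}_{+}$ and $T\in T_{\tau,\eta,\mu}(\Sigma)$, all of which are hypotheses here, so such a $C$ exists. This single controller $C$ is then the witness claimed in the conclusion.

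With $C$ fixed, condition~(1) is immediate from part~(1) of Proposition~\ref{pro:exist of traj}: under the standing bound $\|e^{\mathbf{A}\tau}\|\varepsilon +\mu+\eta/2 < \varepsilon$, every $\tau$-controller derived from $q_{0}$ and $f$ admits at least one trajectory of $\Sigma$, and in particular our $C$ does. For condition~(2) I would apply Theorem~\ref{Th:close-loop satisfy 1} to the very same $C$. Its hypotheses coincide with the present ones verbatim—the bisimulation-type bound $\|e^{\mathbf{A}\tau}\|\varepsilon +\mu+\eta/2 < \varepsilon$, the sampling bound $\|\mathbf{x}(t)-\mathbf{x}((n-1)\tau)\|\leq \delta$, and the satisfaction premise $\sigma\models tr_{\varepsilon}^{\delta}(tr_{\delta}(\varphi_{0}))$ for all $\sigma\in Out(q_{0},f)$—so its conclusion that all trajectories of $\Sigma$ with $\tau$-controller $C$ satisfy $\varphi_{0}$ transfers unchanged.

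The only point requiring attention is that a \emph{single} controller must serve both conclusions at once: Proposition~\ref{pro:exist of traj} and Theorem~\ref{Th:close-loop satisfy 1} are each stated for an \emph{arbitrary} controller derived from $q_{0}$ and $f$, so once Lemma~\ref{lem:controller exist} hands us one such $C$, both results apply to precisely that $C$, and conditions~(1) and~(2) hold simultaneously for it. Beyond this bookkeeping there is no genuine obstacle—the real content, namely the propagation of $A\varepsilon A$ bisimilarity through the $Out$ sets (Lemma~\ref{Lem:bisimilar state to bisimilar sequence}, Proposition~\ref{pro:exist of traj}) and through the formula transforms $tr_{\delta}$ and $tr_{\varepsilon}^{\delta}$ (Theorem~\ref{Th:logical connection 1'}, Lemma~\ref{Lem:logical characterization of bisimilar sequence})—has already been established, so this final theorem is a short composition of the cited statements.
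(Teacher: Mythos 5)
Your proposal is correct and follows exactly the paper's own proof: the paper likewise obtains the controller $C$ from Lemma~\ref{lem:controller exist}, then derives condition~(1) from Proposition~\ref{pro:exist of traj} and condition~(2) from Theorem~\ref{Th:close-loop satisfy 1}. Your added remark that both cited results apply to an arbitrary controller derived from $q_{0}$ and $f$, so one controller witnesses both conclusions simultaneously, is a point the paper leaves implicit but is accurate.
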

\begin{proof}
Suppose that there exists a state
$q_{0}$ and a control strategy $f$ of $T$ so that  $\sigma\models tr_{\varepsilon}^{\delta}(tr_{\delta}(\varphi_{0}))$ for any $\sigma\in Out(q_{0},f)$.
Then by Lemma~\ref{lem:controller exist}, there exists a $\tau$-controller $C=(X_{0},f_{c})$ derived from $q_{0}$ and~$f$.
Further, (1) follows from Proposition~\ref{pro:exist of traj} and (2) is implied by Theorem~\ref{Th:close-loop satisfy 1}.~\end{proof}

In the above two theorems, the assumption  $\|e^{\mathbf{A}\tau}\|\varepsilon +\mu+\eta/2 < \varepsilon$ is introduced by Pola and Tabuada  to guarantee that the finite abstraction and the sample system of the given linear system are A$\varepsilon$A bisimilar (see  Theorem~\ref{Th:bisi}).
\section{Conclusion and future work}\label{Sec:discussion}
In order to provide a framework to design controller for systems affected by disturbances, Pola and Tabuada introduce finite abstractions for these systems~\cite{pola:5,pola:1}.
This paper concerns itself with the relationship between the control strategy of these abstractions and the controller of the original control systems.
Similar work has been developed for control systems without disturbances~\cite{fain:2},\cite{tab:1},\cite{tab:2}.
In these work, since finite abstractions and the original control systems share the same properties of interest,
the formal design of control systems may be equivalently performed on the corresponding finite abstractions.

This paper points out that Pola and Tabuada's finite abstraction and its original control system do not always share the identical properties described by  LTL$_{-X}$ formulae under control (see Example~\ref{example: counter1} and~\ref{example:counter2}).
Thus, if we adopt the same formula $\varphi_{0}$ as specification of control systems and finite abstractions, the formal design of the latter may not be helpful for the former.
This paper tries to fill such gap between finite abstractions and control systems with disturbances.
To this end, the specification transforming function $\lambda\varphi.tr^{\delta}_{\varepsilon}(tr_{\delta}(\varphi))$ is introduced, which transforms a specification for control systems to one for finite abstractions.
We illustrate that under some assumption, given an initial state $q$ and a control strategy $f$ of finite abstraction enforcing $tr^{\delta}_{\varepsilon}(tr_{\delta}(\varphi_{0}))$, then there exists a controller derived from $q$ and $f$ such that the trajectories of $\Sigma$ with this controller satisfy $\varphi_0$ (see Theorem~\ref{co:main}).
In another paper~\cite{zhang:2}, we also provide an algorithm to obtain an initial state and a control strategy which enforces a given finite abstraction satisfying desired specification.
These results indicate that Pola and Tabuada's abstractions may be a useful tool in the formal design of control systems with disturbance inputs.

However, this paper just proves the existence of controller derived from the given initial state $q$ and control strategy $f$, but does not offer the construction of such controller.
In other words, Definition~\ref{def:controller construct} just tells us what is a controller derived from $q$ and $f$, but does not provide a way to obtain it.
Clearly, it is a topic worthy of further study that how to obtain such controller.



\ifCLASSOPTIONcaptionsoff
  \newpage
\fi

\end{document}